  \theoremstyle{plain}
  \newtheorem{theorem}{Theorem}[section]
  \newtheorem{proposition}[theorem]{Proposition}
  \newtheorem{corollary}[theorem]{Corollary}
  \theoremstyle{definition}
  \newtheorem{definition}[theorem]{Definition}
  \newtheorem{remark}[theorem]{Remark}
  \numberwithin{equation}{section}
  \numberwithin{figure}{section}
  \renewcommand{\cH}{{\mathcal H}}
  \newcommand{\cK}{{\mathcal K}}
  \newcommand{\cA}{{\mathcal A}}
  \newcommand{\cP}{{\mathcal P}}
   \newcommand{\ba}{\begin{eqnarray}}
   \newcommand{\na}{\end{eqnarray}}
   \newcommand{\ban}{\begin{eqnarray*}}
   \newcommand{\nan}{\end{eqnarray*}}
     \newcommand{\vlim}{\varinjlim}
  \newcommand{\CC}{{\mathbb C}}
  \newcommand{\RR}{{\mathbb R}}
  \newcommand{\ZZ}{{\mathbb Z}}
  \newcommand{\KK}{{\mathbb  K}}
 \newcommand{\CL}{{\bf  Cliff}}
  \newcommand{\ind}{{\bf  Index }}
  \newcommand{\MSpin}{{\bf MSpin^c }}
  \newcommand{\Th}{{\bf  Th}}
\newcommand{\BSpin}{{\bf  BSpin}}
 \newcommand{\BString}{{\bf  BString}}
  \newcommand{\BSO}{{\bf  BSO}}
   \newcommand{\MString}{{\bf  MString}}
   \newcommand{\MSp}{{\bf  MSpin}}
   \newcommand{\MSO}{{\bf  MSO}}
    \newcommand{\String}{{\bf  String}}
   \newcommand{\Fred}{{\bf  Fred}}
  \renewcommand{\a}{\alpha}
  \renewcommand{\b}{\beta}
   \renewcommand{\i}{\iota}
  \newcommand{\eps}{\epsilon}
  \newcommand{\pa}{\partial}
    \newcommand{\disp}{\displaystyle}
  \newcommand{\nin}{\noindent}
  \def\cancel#1#2{\ooalign{$\hfil#1\mkern1mu/\hfil$\crcr$#1#2$}}
\def\Dirac{\mathpalette\cancel D}
\begin{document}

\title[Geometric cycles, index theory and  twisted K-Homology]
{Geometric cycles, index theory and  twisted K-homology  }

  \author[Bai-Ling Wang]{Bai-Ling Wang}
  \address[Bai-Ling Wang]
  {Department of Mathematics\\
  Mathematical Sciences Institute\\
  Australian National University\\
  Canberra ACT 0200 \\
  Australia}
  \email{wangb@maths.anu.edu.au}

\date{September 25, 2007}

  \thanks{
 }
  \subjclass[2000]{19K56,  55N22,   58J22}
 \keywords{Twisted $Spin^c$-manifolds, twisted K-homology, twisted index theorem}

  \begin{abstract} We study twisted $Spin^c$-manifolds over a paracompact Hausdorff
  space $X$ with a twisting  $\alpha: X \to K(\ZZ, 3)$. We introduce   the topological  index and
  the analytical index on the bordism group of $\alpha$-twisted   $Spin^c$-manifolds 
  over $(X, \alpha)$,  taking values in topological twisted K-homology
  and analytical  twisted K-homology respectively.  The main result of this paper is to  establish the equality between  the topological index and the analytical index for smooth manifolds.   We also  define a notion of  geometric twisted K-homology, whose cycles are  geometric cycles  of $(X, \a)$ analogous to 
  Baum-Douglas's geometric cycles.
  As an application of our twisted index theorem,  we discuss the twisted longitudinal index theorem for a foliated manifold $(X, F)$ with a twisting  $\alpha: X \to K(\ZZ, 3)$,  which generalizes  the Connes-Skandalis index theorem for foliations and  the  Atiyah-Singer families index theorem to twisted cases. 
  \end{abstract}

\maketitle

\tableofcontents

\newpage

\section{Introduction}

According to work of Baum and Douglas \cite{BD1} \cite{BD2}, the  Atiyah-Singer index theorem (\cite{AtiSin1}\cite{AtiSin3})  for a closed smooth manifold $X$  
can be formulated  as in the  following commutative  triangle 
\ba\label{index:BD1}
   \xymatrix{ 
& K^0 (T^*X)  \ar[dl]_{\ind_t} \ar[dr]^{\ind_a} 
&\\
K^{t}_0(X) \ar[rr]^{\mu}_{\cong} & & K^a_0 (X ),}
\na
 whose arrows are all   isomorphisms.  Here $K^0 (T^*X) $ denotes the K-cohomology with compact supports of the cotangent
 bundle $T^*X$, 
 corresponding to symbol classes of  elliptic pseudo-differential  operators on $X$. $K^{t}_0(X)$ is the topological K-homology  constructed in \cite{BD1}, and $K^a_0 (X )$
 is the Kasparov's analytical K-homology (see \cite{Kas} and \cite{HigRoe}) of the $C^*$-algebra $C(X)$ of continuous complex-valued functions 
 on $X$. 
 
 The topological index    and the analytical index   can defined on the level
 of cycles.   The basic cycles for $ K^t_0 (X) $ (resp. $K^t_1(X)$)  are triples $(M, \i, E)$
  consisting of even-dimensional (resp. odd-dimensional)  closed smooth manifolds $M$  with a
 given $Spin^c$ structure on the tangent bundle of $M$ together with a continuous map $\i: M\to X$
 and a complex vector bundle $E$ over $M$. The equivalence relation on the  set  of all cycles is generated by
 the following three steps (see \cite{BD1} for details):
 \begin{enumerate}
\item[(i)] Bordism
\item[(ii)] Direct sum and disjoint union
\item[(iii)] Vector bundle modification.
 \end{enumerate}
Addition in $K^t_{ev/odd}(X)$ is given by the disjoint union operation of topological cycles. 

 A symbol class  in $K^0(T^*X)$  of an  elliptic  pseudo-differential   operator $D$  on $X$ is  represented by 
\[
\sigma(D): \pi^*E_0 \longrightarrow  \pi^*E_1
\]
where  $\pi: T^*X \to X$ is the projection, $E_0$ and $E_1$ are complex vector bundles over $X$. Choose a Riemannian metric on $X$,  let $S(T^*X \oplus \underline{\RR})$
be  the unit sphere bundle in $T^*X \oplus \underline{\RR}$, equipped with the natural 
$Spin^c$ structure. Denote by $\phi$   the projection $ S(T^*X \oplus \underline{\RR}) \to X$.
Let $\hat{E}$ be  the complex vector bundle over $S(T^*X \oplus \underline{\RR})$  obtained
by the clutch construction (See section 10 in \cite{BD1}): as 
$S(T^*X \oplus \underline{\RR})$  consists of two copies of the unit ball bundle
of $T^*X$ glued together along the unit sphere bundle, one can use the
symbol $\sigma (D) $ to clutch $\pi^*E_0$ and $\pi^*E_1$ together along 
the unit sphere bundle  $S(T^*X)$. The topological  index $\ind_t([\sigma(D)])$ is represented by the
following    topological cycle
\[
(S(T^*X \oplus \underline{\RR}), \hat{E}, \phi). 
\]

The Kasparov's  analytical K-homology   $K^a_{ev/odd}(X)$,  denoted $KK^{ev/odd}(C(X), \CC)$ in the literature, is 
generated by unitary equivalence classes of multi-graded Fredholm modules  over $C(X)$ modulo operator homotopy
relation \cite{Kas}. Addition in $KK^{ev/odd}(C(X), \CC)$  is defined using a natural notion of direct sum  
of Fredholm modules, see \cite{HigRoe} for details.  The analytical index $\ind_a([\sigma(D)])$ is defined in terms of   
Poincar\'{e} duality  (Cf. \cite{Kas1})
  \[\begin{array}{lllll}
K^0(T^*X)&  \cong & KK^0(\CC, C_c(T^*X))  && \\[2mm]
& \cong & KK^0(C(X), \CC) & \quad & \text{(Kasparov's Poincar\'{e} duality)} \\[2mm]
&=& K^a_0(X).&&  \end{array}
\]

 On the level of cycles, an even dimensional topological cycle $(M, f, E)$
 defines a canonical element $[\Dirac^E_M]$  in $K^a_0(M)$ determined  by the Dirac operator 
 \[
 \Dirac^E_M:  C^\infty(S^+\otimes E) \longrightarrow C^\infty(S^-\otimes E)
 \]
 where $S^\pm$are  the positive and negative spinor bundles (called reduced spinor bundles
 in \cite{HigRoe}). Then the natural isomorphism 
 \[
 \mu:  K^t_0(X) \longrightarrow K^a_0(X)
 \]
 is defined by the  correspondence 
 \[
 (M, \i, E) \mapsto \i_*([\Dirac^E_M])
 \]
 where $\i_*: K^a_0(M) \longrightarrow K^a_0(X)$ is the covariant homomorphism induced by $\i$.
 
 The commutative triangle (\ref{index:BD1}) has played an  important role in the understanding of the Atiyah-Singer
 index theorem and its various generalizations  such as the Baum-Connes conjecture  in \cite{BC1}. 
 In this paper, we will generalize  the Atiyah-Singer index theorem to the framework of twisted K-theory following 
ideas inspired from Baum-Douglas \cite{BD1} \cite{BD2} and Baum-Connes \cite{BC1}.  

In this paper, we aim to develop  the   index theorem  in the framework of twisted K-theory which
is a natural generalization of the Baum-Douglas  commutative triangle  (\ref{index:BD1}). 
 We need a notion of 
 a twisting in complex K-theory, given by a continuous map
 \[
 \a: X\longrightarrow K(\ZZ, 3)
 \]
 where $K(\ZZ, 3)$ is an Eilenberg--MacLane space. We often  choose    a homotopy model of  $K(\ZZ, 3)$ as the classifying space of the 
 projective unitary group $PU(\cH)$ of an  infinite dimensional, complex and separable Hilbert space $\cH$,
 equipped  with the norm topology. 
The norm topology could be too restrictive for some examples, one might have   to use the compact-open topology instead as discussed in \cite{AS1}.     

For any paracompact Hausdorff space $X$ with a  continuous map  $\a: X\to K(\ZZ, 3)$,    the corresponding
 principal $K(\ZZ, 2)$-bundle  over $X$ will be denoted by $\cP_\a$. Then any
base-point preserving 
action of $K(\ZZ, 2)$ on a  spectrum  defines an associated 
bundle of based spectra.  In this paper, we mainly consider two spectra, one is the complex
K-theory spectrum $\KK = \{\Omega^n\KK\}$, the other is the $Spin^c$ Thom spectrum $\MSpin= \{  \MSpin (n)\}$.  The corresponding
bundle of based spectra over $X$ will be denoted by
\[
 \cP_\a(   \KK)     \quad  \text{and} \quad \cP_\a(\MSpin)  
\]
respectively.

Twisted K-cohomology groups of  $(X, \a)$  are  defined to be 
\[
\pi_0\bigl( C_c (X, \cP_\a(\Omega^n  \KK)) \bigr)
\]
the homotopy classes of compactly supported sections of  $\cP_\a(\Omega^n  \KK)$.  

 Let $\cK$ be    the $C^*$-algebra  of  compact 
 operators on the Hilbert space $\cH$, and  $\cP_\a(\cK)$ be the associated bundle  of compact operators 
  corresponding to the 
 $PU(\cH)$-action on  $\cK$ by conjugation. 
 An equivalent definition of  twisted K-theory  of $(X, \a) $ is  
  the algebraic K-cohomology 
groups of the continuous trace $C^*$-algebra over $X$ of  compactly supported  sections of  
$\cP_\a( \cK)$.
  The Bott periodicity of the K-theory
spectrum implies that we only have two  twisted K-groups,   denoted by
\[
K^0(X, \a) \ \text{and} \  K^1(X, \a),
\]
or simply  $K^{ev/odd}(X, \a)$. 
We will review  twisted K-theory and its basic properties in section \ref{review:K}.

We  define   topological twisted K-homology  to be
\[
K^t_{n} (X, \a) :=\disp{ \vlim_{k\to \infty}} [S^{n+2k},  \cP_\a(\Omega^{2k} \KK)/X]
\]
the stable homotopy groups of $\cP_\a(\KK)/X$. 

Due to  Bott periodicity, we only have two different topological twisted K-homology groups denoted by
$K^t_{ev/odd}(X, \a)$.  There is a notion of analytical twisted K-homology  defined as  Kasparov's  analytical K-homology
\[
K^a_{ev/odd}(X, \a)  := KK^{ev/odd} (C_c(X, \cP_\a( \cK)), \CC).
\]
Now we can state the main theorem of this paper, which should be thought of as the general index theorem in the framework of twisted K-theory.

\vspace{3mm}
\nin{\bf Main Theorem}  (Cf. Theorem \ref{Main} and Remark \ref{remark:index})  {\em Let $X$ be a smooth manifold and
$\pi: T^*X \to X$ be the projection ,  there is a natural
isomorphism 
\[
\Phi:   K^t_{ev/odd}(X, \a) \longrightarrow K^a_{ev/odd}(X, \a), 
\]
and   there exist  notions of   the topological index and the analytical index  on 
$$K^{ev/odd}( T^*X, \a\circ\pi)$$
such that  the following diagram 
\[\xymatrix{
& K^{ev/odd} (T^*X, \a  \circ  \pi ) \ar[dl]_{\ind_t} \ar[dr]^{\ind_a}
&\\
K^t_{ev/odd} (X,  \a ) \ar[rr]^{\cong}_{\Phi} &  & K^a_{ev/odd} (X, \a )}
   \]
is commutative, and all arrows are isomorphisms.} 
\vspace{3mm}

We remark  that 
topological and analytical twisted K-homology groups are well-defined for any paracompact 
Hausdorff space $X$ with a continuous map $\a: X\to K(\ZZ, 3)$. The above main theorem  only holds for
smooth manifolds,  we believe that  the isomorphism  
\[
\Phi:   K^t_{ev/odd}(X, \a) \longrightarrow K^a_{ev/odd}(X, \a), 
\]
should be true for more general  spaces such as paracompact Hausdorff spaces  
with the homotopy type of  finite CW complexes.   We only establish
 this isomorphism  for smooth manifolds by applying the Poincar\'{e} duality  in  twisted K-theory   which requires  differential structures, see the proof of Theorem \ref{Main} for details. It would be interesting 
to have   this isomorphism for  paracompact 
Hausdorff spaces with  the homotopy type of finite CW complexes.

To prove this main theorem, we introduce a notion of  
 $\a$-twisted $Spin^c$   manifolds   over  any
 paracompact Hausdorff space $X$ with a  continuous map  $\a: X\to K(\ZZ, 3)$  in Section \ref{section:3},   which consists  of   quadruples  $(M, \nu,  \i, \eta)$ 
where 
\begin{enumerate}
\item $M$ is a smooth,  oriented and compact manifold together  with a fixed classifying map of its  stable normal bundle  
\[
\nu:   M \longrightarrow  \BSO
\]
here $\BSO= \disp{ \varinjlim_{k } } \BSO(k)$ is  the classifying space of stable normal bundle of $M$;
\item $\i: M\to X$ is a  continuous map;
\item $\eta$  is an $\a$-twisted $Spin^c$  structure  on $M$, that is   a homotopy commutative diagram
(see  Definition \ref{twisted:cob} for details)
\[
\xymatrix{M \ar[d]_{\i} \ar[r]^{\nu} &
\BSO
 \ar@2{-->}[dl]_{\eta} \ar[d]^{W_3} \\
X \ar[r]_\a  & K(\ZZ, 3). } 
\]
\end{enumerate}
A manifold $M$ admits an $\a$-twisted $Spin^c$ structure   if and only if  there exists
a continuous map
$\i: M\to X$ such that 
\[
\i^*([\a]) +  W_3(M)=0
\]
in $H^3(M, \ZZ)$. (This  is  known to physicists as the Freed-Witten anomaly cancellation
condition for Type II D-branes (Cf. \cite{FreWit}) ).

We then define an analytical index for each  $\a$-twisted $Spin^c$   manifold over $X$ taking values
in the analytical twisted K-homology $K^a_{ev/odd} (X, \a )$ and establish
its  bordism invariance. 

  In Section  \ref{section:4}, we study the  geometric  $\a$-twisted bordism groups $\Omega_*^{Spin^c} (X, \a)$ and
establish  a generalized Pontrjagin-Thom isomorphism (Cf. Theorem \ref{P-Thom}) between our geometric  $\a$-twisted bordism groups  
and the homotopy theoretic   definition of $\a$-twisted bordism groups 
\[
\Omega_n^{Spin^c} (X, \a) \cong \disp{\lim_{k\to \infty} } \pi_{n+k} (\cP_\a( \MSpin (k)) /X).
\]
We also define a topological index on geometric  $\a$-twisted bordism groups.  Then the main theorem is proved in Section
\ref{section:5}.

In Section \ref{section:6}, we explain  the notion  of  geometric cycles for any paracompact Hausdorff space $X$ with a  continuous map  $\a: X\to K(\ZZ, 3)$. Geometric cycles  in this sense  are called `D-branes' in string theory.     These consist  
of  an $\a$-twisted $Spin^c$   manifold $M$    over $X$ together with an ordinary K-class $[E]$.  Following the work
of Baum-Douglas, we impose an equivalence relation, generated
by\begin{enumerate}
\item[(i)]
Direct sum and  disjoint union
 \item [(ii)]  Bordism 
 \item[(iii)]   $Spin^c$ vector bundle 
modification\end{enumerate}
on  the set of all geometric cycles  to obtain the 
geometric twisted K-homology $K^{geo}_{ev/odd}(X, \a)$. Then we establish the commutative diagram (Cf. Theorem \ref{Main:2}) for a smooth manifold $X$ with a twisting $\a: X\to K(\ZZ, 3)$
\ba\label{equ:triangle}
\xymatrix{
& K^t_{ev/odd} (X, \a ) \ar[dl]_{\Psi} \ar[dr]^{\Phi}
&\\
K^{geo}_{ev/odd} (X,  \a ) \ar[rr]^{\cong}_{\mu} &  & K^a_{ev/odd} (X, \a )}
   \na
whose arrows are all isomorphisms. 
One consequence of this commutative diagram is    that every twisted K-class in $K^{ev/odd}(X, \a)$
can be realized by appropriate geometric cycles (Cf. Corollary \ref{D:brane}).  

  We remark that the commutative diagram (\ref{equ:triangle}) of isomorphisms should hold
for general paracompact Hausdorff spaces  with the homotopy type of  finite CW complexes. 
The restriction to smooth manifolds  is due to the fact that we  only establish
the isomorphism $\Phi$ in Theorem \ref{Main}  for smooth manifolds.  We expect that the    equivalence 
of geometric, topological and analytical twisted K-homology exists  for any finite  CW complex. 
We will return to this in a   sequel  paper  (\cite{Baum-W}.

In Section \ref{section:app}, we study the  twisted longitudinal index theorem (Cf. Theorem \ref{index:foliated})  for a foliated manifold $(X, F)$ with a twisting $\a: X\to K(\ZZ, 3)$, and show that   this twisted longitudinal index theorem 
generalizes both the Atiyah-Singer families index theorem in \cite{AtiSin4} and 
  Mathai-Melrose-Singer index theorem for projective families of elliptic operators 
  associated to a torsion twisting in \cite{MMSin}.

  In Section 8, we introduce  a  notion
of twisted $Spin$ manifolds over a manifold $X$ with a KO-twisting 
$\a: X \to K(\ZZ_2, 2)$. 
A smooth manifold $M$ admits an $\a$-twisted $Spin$ structure   if and only if  there exists
a continuous map
$\i: M\to X$ such that 
\[
\i^*([\a]) +  w_2 (M)=0
\]
in $H^2(M, \ZZ_2)$, here $w_2(X) $ is the second  Stiefel-Whitney  class of $TM$.  (This  is   the   anomaly cancellation
condition for Type I D-branes (Cf. \cite{Wit3}) ). 
We also discuss   a notion of twisted string manifolds over  a manifold $X$ with a string twisting 
$
\a: X \to K(\ZZ, 4).  
$
A smooth manifold $M$ admits an $\a$-twisted string  structure if and only if  there is a continuous map
$\i: M\to X$ such that
\[
\i^*([\a]) +    \frac{p_1 (M)}{2}  =0
\]
in $H^4(M, \ZZ)$, here $ p_1(X) $ is the first Pontrjagin class of $TM$.  
These notions  could be useful in the study  of  twisted elliptic cohomology.  

It would be interesting to establish a  local index theorem in the framework of twisted K-theory in which differential twisted K-theory  in \cite{CMW} \cite{HopSin} will come into play. We will return to these problems  in   subsequent work.  Finally, we like to point that, except in Sections  5  and  7 and Theorem
\ref{Main:2} where $X$ is smooth, $X$ is assumed to be a paracompact  Hausdorff topological space
throughout  this paper.

\section{Review of twisted K-theory} \label{review:K}

 In this section, we briefly review some basic facts about twisted K-theory, the main reference 
 are \cite{AS1} and \cite{CW2} (See also  \cite{BCMMS} \cite{FHT}  \cite{Ros}).
 
 Let $\cH$ be an  infinite dimensional, complex and separable Hilbert space. We shall consider locally trivial  principal $PU(\cH)$-bundles over a
paracompact  Hausdorff topological space $X$,    the structure group $PU(\cH)$ is equipped with the norm topology.  
The projective unitary group $PU(\cH)$ with the norm topology (Cf. \cite{Kui})  has the homotopy type of an 
Eilenberg-MacLane space $K(\ZZ, 2)$. 
The  classifying space of $PU(\cH)$, as  a classifying space of principal $PU(\cH)$-bundle,  is a $K(\ZZ, 3)$. 
Thus, the set of isomorphism classes of 
principal $PU(\cH)$-bundles over   $X$ is canonically identified
with  (Proposition 2.1 in \cite{AS1})
$$[X, K(\ZZ, 3)] \cong H^3(X, \ZZ).$$ 

  A twisting of  complex $K$-theory on $X$ is given by   a  continuous map $\a: X\to K(\ZZ, 3)$. For such a twisting,
   we can associate a  canonical principal $K(\ZZ, 2)$-bundle
 $\cP_\a$ through  the following pull-back construction
\ba\label{bundle}
\xymatrix{\cP_\a  \ar[d]   \ar[r]  &
EK(\ZZ, 2) 
  \ar[d]  \\
X \ar[r]_\a  & K(\ZZ, 3). } 
\na
 Let $\KK$ be the 0-th space of the complex K-spectrum,  in this paper, 
we   take $\KK$ to be ${\bf Fred} (\cH)$, the space of Fredholm operators on $\cH$.     There is a base-point preserving action
of $K(\ZZ, 2)$ on the K-theory spectrum
\[
K(\ZZ, 2) \times \KK \longrightarrow  \KK
\]
which is represented by the action of complex line bundles on ordinary K-groups.  As we identify  $K(\ZZ, 2) $ with $ PU(\cH)$ and $\KK$ with ${\bf Fred}(\cH)$, the above base point preserving action is given by
the conjugation action
\ba\label{action}
PU(\cH) \times  \Fred (\cH) \longrightarrow \Fred (\cH).
\na 

The action (\ref{action})  defines an associated bundle of  K-theory spectra over $X$. Denote 
\[
\cP_\a (\KK) = \cP_\a\times_{K(\ZZ, 2)} \KK 
\]
the bundle of based spectra over $X$ with fiber the  K-theory spectrum,  and $\{ \Omega^n_X \cP_\a(\KK) = 
 \cP_\a\times_{K(\ZZ, 2)} \Omega^n \KK  \}$  the   fiber-wise iterated loop spaces.

 \begin{definition} The twisted K-groups  of $(X, \a)$ are defined to be
\[
K^{-n}(X, \a) := \pi_0\bigl( C_c(X, \Omega^n_X \cP_\a(\KK))\bigr),
\]
the  set of homotopy classes of compactly supported sections of the bundle of K-spectra.
 \end{definition}

Due to Bott periodicity, we only have two different  twisted K-groups $K^0(X, \a)$ and
$K^1(X, \a)$. 
Given  a closed subspace $A$  of $X$, then $(X, A)$ is   a pair of topological spaces, and
  we define relative 
 twisted K-groups  to be 
 \[
 K^{ev/odd}(X, A;  \a) := K^{ev/odd}(X-A, \a).
 \]
 
\begin{remark} Given a pair of twistings $\a_0, \a_1: X \to K(\ZZ, 3)$, if $\eta: X\times [1, 0] \to K(\ZZ, 3)$ is a homotopy 
between  $\a_0$ and $\a_1$, written as  in the following formation,
\[
  \xymatrix{X  \ar@/^2pc/[rr]_{\quad}^{\a_0}="1"
\ar@/_2pc/[rr]_{\a_1}="2"
&& K(\ZZ, 3),
\ar@{}"1";"2"|(.2){\,}="7"
\ar@{}"1";"2"|(.8){\,}="8"
\ar@{==>}"7" ;"8"^{\eta} }
  \]
 then there is a canonical isomorphism 
$\cP_{\a_0} \cong \cP_{\a_1}$  induced by $\eta$.  This canonical isomorphism
determines a canonical  isomorphism on twisted K-groups 
\ba\label{iso:eta}\xymatrix{ 
\eta_*:  \  K^{ev/odd}(X,  \a_0) \ar[r]^{\cong}  & K^{ev/odd}(X,  \a_1),}
\na
This isomorphism $\eta_*$  depends only on the homotopy class of $\eta$.     
The set of  homotopy classes between  $\a_0$ and $\a_1$ is labelled by 
$[X, K(\ZZ,   2)]$. Note that the the first Chern class  isomorphism
\[
{\bf Vect}_1(X) \cong [X, K(\ZZ,   2)] \cong H^2(X, \ZZ)
\]
where ${\bf Vect}_1(X)$ is 
 the set of equivalence classes of complex line bundles on $X$.  We remark that the isomorphisms 
induced by  two different homotopies 
between  $\a_0$ and $\a_1$   are related through an action of complex line bundles. 
This observation will  play   an important role in the  local index theorem for twisted K-theory.
 \end{remark}

\begin{remark}   Let $\cK $ be the $C^*$-algebra  of  compact 
  operators on $\cH$.  
The isomorphism $PU(\cH) \cong Aut ( \cK)$ via the conjugation action of the unitary group
$U(\cH)$ provides an action of 
$K(\ZZ, 2)$ on the $C^*$-algebra  $\cK$. Hence, any
$K(\ZZ, 2)$-principal bundle $\cP_\a$ defines a locally trivial  bundle of compact operators,
denoted by 
$
\cP_\a(\cK) = \cP_\a\times_{K(\ZZ, 2)} \cK.
$
Let  $C_c(X, \cP_\a(\cK))$ be  the $C^*$-algebra of the compact supported sections of 
$\cP_\a(\cK)$.   We remark that $C_c(X, \cP_\a(\cK)$ is  the (unique up to isomorphism) stable separable complex  continuous-trace $C^*$-algebra over $X$ with its
Dixmier-Douday class $[\a] \in H^3(X, \ZZ)$, here we identify the \v{C}ech cohomology of $X$ with its
singular cohomology (Cf, \cite{Ros} and \cite{Par1}). 
In \cite{AS1} and \cite{Ros},   it was proved that twisted K-groups  $K^{ev/odd}(X, \a)$
are canonically isomorphic to  algebraic K-groups    of the stable   continuous  trace
 $C^*$-algebra  $C_c(X, \cP_\a(\cK))$
\ba\label{twited:K:equ}
K^{ev/odd}(X, \a) \cong KK^{ev/odd} \bigl(\CC, C_c(X, \cP_\a(\cK))\bigr).
\na
\end{remark}

The twisted K-theory is a 2-periodic  {\em generalized cohomology theory}: 
 a contravariant functor on the category
 of pairs consisting   a pair of  topological spaces $A\subset X$with   a twisting $\a: X\to K(\ZZ, 3)$ to the category of
 $\ZZ_2$-graded abelian groups.    Note that a  morphism  between two pairs
 $(X, \a)$ and $(Y, \b)$ is a continuous map $f: X\to Y$ such that $ \b  \circ f =\a$. 
 The twisted K-theory satisfies  the following three axioms whose proofs are rather standard 
 for   2-periodic generalized cohomology theory.  
  \begin{enumerate}

  \item[(I)] ({\bf The homotopy axiom}) If two morphisms  $f, g: (Y, B)\to (X, A)$ are homotopic through
a map $\eta: (Y\times [0, 1] , B \times [0, 1]) \to (X, A)$, written  in terms of  the following homotopy
commutative diagram 
\[
\xymatrix{(Y, B) \ar[d]_{g} \ar[r]^{f} &
(X, A)
 \ar@2{-->}[dl]_{\eta} \ar[d]^{\a} \\
(X, A) \ar[r]_\a  & K(\ZZ, 3), } 
\]
then we have the following commutative diagram
\[
\xymatrix{
& K^{ev/odd}(X, A; \a)\ar[dr]^{g^*} \ar[dl]^{f^*} &\\
K^{ev/odd}(Y, B; \a\circ f)\ar[rr]^{\eta_*} && K^{ev/odd}(Y, B; \a\circ g).}
\]
Here $\eta_*$ is the canonical isomorphism induced
by the homotopy $\eta$.
\item[(II)]   ({\bf The exact  axiom}) For any pair $(X, A)$ with a twisting $\a: X\to K(\ZZ, 3)$, there exists the following
 six-term exact sequence
 \[\xymatrix{
 K^0 (X, A; \a) \ar[r]& K^0  (X, \a)\ar[r]& K^0 (A,\a |_A)
  \ar[d]\\
K^1 (A, \a |_A)  \ar[u] & K^1  ( X, \a) \ar[l]&
    K^1  (X, A; \a) \ar[l]
   }
 \]
here $\a |_A$ is the composition of the inclusion and $\a$.

  \item[(III)]  ({\bf The excision axiom})  Let $(X, A)$ be a pair of spaces and let 
$U \subset A$ be a subspace  such that  the closure $\overline{U} $ is contained in the interior of
 $A$. Then the inclusion $\i: (X-U, A-U) \to (X, A)$ induces, for all $\a: X\to K(\ZZ, 3)$,  an 
isomorphism 
\[
K^{ev/odd}  (X, A; \a)  \longrightarrow K^{ev/odd}  (X-U, A-U; \a\circ \i).
\]
\end{enumerate}
In addition, twisted K-theory satisfies the following basic properties  (see \cite{AS1} \cite{CW2} for detailed proofs).
 \begin{enumerate}
 
  \item[(IV)]  ({\bf Multiplicative property}) Let $\a, \b: X\to K(\ZZ, 3)$ be a pair of twistings on $X$. Denote
 by $ \a+ \b$ the new twisting defined by the following map
 \ba\label{product}
\xymatrix{
\a   +\b :  \quad   X\ar[r]^{(\a   , \b )  \ \  } & K(\ZZ, 3) \times K(\ZZ, 3) \ar[r]^{
\qquad m}   & 
K(\ZZ, 3), }
\na
where $m$ is defined as follows
\[
B PU(\cH) \times BPU(\cH) \cong  B( PU(\cH)  \times  PU(\cH) )    \longrightarrow B PU(\cH), 
\]
for a fixed  isomorphism $\cH \otimes \cH \cong \cH$. 
   Then there is a canonical
multiplication
\[
K^{ev/odd} (X, \a)  \times K^{ev/odd} (X,   \b) \longrightarrow K^{ev/odd} (X, \a + \b ),
\]
which defines a $K^0(X)$-module structure  on   twisted K-groups $K^{ev/odd} (X, \a)$.

 \item[(V)]   ({\bf Thom isomorphism})  Let $\pi: E\to X$ be an oriented real vector bundle of rank $k$  over $X$ with the 
 classifying map
  denoted by $\nu_E: X \to \BSO (k)$, then there is a canonical isomorphism, for any twisting $\a: X\to K(\ZZ, 3)$,
 \ba\label{Thom:CW}
 K^{ev/odd}(X, \a +( W_3 \circ \nu_E) )  \cong K^{ev/odd} (E, \a \circ  \pi),
 \na
with the grading shifted by $k (mod \ 2 )$. 

\nin 
Here  $W_3: \BSO (k) \to  K(\ZZ, 3)$ is   the classifying map of the principal 
$K(\ZZ, 2)$-bundle $\BSpin^c (k) \to \BSO(k) $. 

\item[(VI)]   ({\bf The push-forward map}) 
  For any differentiable map $f: X\to Y$ between two smooth manifolds $X$ and $Y$,  let $\a: Y \to K(\ZZ, 3)$
  be a twisting. Then there is a canonical  push-forward  homomorphism 
  \ba\label{push:forward}
  f_!:   \qquad  K^{ev/odd} \bigl(X, ( \a \circ f)  + (W_3\circ \nu_f) \bigr) \longrightarrow K^{ev/odd}(Y, \a),
  \na
  with the grading shifted  by $n \ mod (2)$ for $n= dim (X)+ dim (Y)$. Here  $\nu_f$ is the classifying map
  $$X \longrightarrow  \BSO(n) $$ corresponding to  the bundle $TX\oplus f^* TY$ over $X$.  
  
  \item[(VII)]  ({\bf  Mayer-Vietoris    sequence})
  If $X$ is covered by two  open subsets $U_1$ and $U_2$ with a twisting  $\a: X\to K(\ZZ, 3)$,  then
    there is a Mayer-Vietoris  exact sequence
 \[
 \xymatrix{
 K^0 (X, \a)\ar[r]& K^1  (U_1\cap U_2, \a_{12})\ar[r]& K^1 (U_1,\a_1 )\oplus K^1( U_2,\a_2 )
  \ar[d]\\
 K^0  (U_1, \a_1) \oplus K^0(U_2,\a_2 )\ar[u] & K^0  (U_1\cap U_2,\a_{12} ) \ar[l]&
   K^1 (X, \a) \ar[l]
   }
 \]
   where $\a_1$, $\a_2$ and $\a_{12}$ are the restrictions of $\a$ to
   $U_1$, $U_2$ and $U_1\cap U_2$ respectively.
      \end{enumerate}

\begin{remark} \begin{enumerate}
\item Note that $[\a +( W_3 \circ \nu_E)] = [\a] +W_3(E)$,   our Thom
isomorphism agrees with the Thom isomorphism in \cite{CW2} and \cite{DK}, where the
notation 
\[
K^{ev/odd}(X, [\a]+W_3 (E) )  \cong K^{ev/odd} (E, \pi^*([\a])) 
 \] 
 is used. 
   
\item The push-forward map  constructed in \cite{CW2}  is established    in the following  form
\[
 f_!:   \qquad  K^{ev/odd} \bigl(X, f^*[\a]+ W_3(TX  \oplus  f^*TY  ) \bigr) \longrightarrow K^{ev/odd}(Y, [\a]), 
 \]
 which is  obtained by applying the   Thom isomorphism and Bott
periodicity as follows. 

Choose an embedding $i:  X \to \RR^{2k}$ . Then $x\mapsto  (f(x), i(x))$  defines  an embedding
 of $X \to Y\times \RR^{2k} $
whose normal bundle $N$ is identified with 
a tubular neighborhood of $X$.  Let  $\nu_N: X\to BSO$   be the classifying map of the normal
bundle $N$, let  $\i: N\to   Y \times \RR^{2k}$  be  the inclusion map, and $\pi:
 Y\times \RR^{2k} \to Y$ be  the projection.   We use the following commutative diagram  
 \[
 \xymatrix{
 N\ar[r]^\i \ar[d] & Y \times \RR^{2k}  \ar[d]^\pi& \\
  X \ar[ur]^{\ (f, i)} \ar[r]^f & Y \ar[r]^\a &K(\ZZ, 3),  }
  \]
 to illustrate induced  twistings $\a\circ f, \a\circ \pi \circ \i$ and $ \a\circ \pi$  on  $X, N$
 and $Y\times \RR^{2nk}$ respectively. 
 Notice  the isomorphism, as  bundles over $X$, 
\[
 N \oplus TX \oplus TX  \cong TX  \oplus  f^*TY \oplus   \underline{\RR}^{2n} 
 \]
 and the canonical $Spin^c$ structure on $TX \oplus TX$  determines   a canonical 
 homotopy between $W_3\circ \nu_N$ and    $W_3\circ \nu_f$, which in turn induces a  canonical  isomorphism \[
K^{ev/odd} \bigl(X, ( \a \circ f)  + (W_3\circ \nu_f) \bigr)  \cong  K^{ev/odd} \bigl(X, ( \a \circ f)  + (W_3\circ \nu_N) \bigr).
\]
 Applying the  Thom isomorphism (\ref{Thom:CW}), we have
\[ 
K^{ev/odd} \bigl(X, ( \a \circ f)  + (W_3\circ \nu_N) \bigr)  \cong K^{ev/odd}  (N, \a \circ \pi \circ \i),
\]
with the grading shifted  by $n \ mod (2)$ for $n= dim (X)+ dim (Y)$.
The  inclusion map $\i: N \to Y\times \RR^{2k}$     induces a natural push-forward map
\[
\i_!: 
K^{ev/odd}  (N, \a \circ \pi \circ \i) \to K^{ev/odd}  (Y \times \RR^{2n}, \a \circ \pi  ).
\]
The Bott periodicity gives a canonical isomorphism
\[ 
K^{ev/odd}  (Y \times \RR^{2n}, \a \circ \pi  ) \cong K^{ev/odd}(Y, \a).
\]
The composition of the above  isomorphisms   and the map $\i_!$
 gives rise to the canonical  push-forward map (\ref{push:forward}).
\end{enumerate} 
\end{remark}

\section{Twisted $Spin^c$-manifolds and analytical index}\label{section:3}

\begin{definition} \label{twisted:cob}  Let $(X, \a)$ be  a paracompact Hausdorff topological space  with a twisting 
$\a$.   An $\a$-twisted $Spin^c$   manifold   over $X$ is  a  quadruple $(M, \nu,  \i, \eta)$ 
where 
\begin{enumerate}
\item $M$ is a smooth,  oriented and {\bf compact}  manifold    together  with a fixed classifying map of its  stable normal bundle  
\[
\nu:   M \longrightarrow  \BSO
\]
here $\BSO= \disp{ \varinjlim_{k } } \BSO(k)$ is  the classifying space of stable normal bundle of $M$;
\item  $\i: M\to X$ is a continuous map;
\item  $\eta$ is an $\a$-twisted $Spin^c$  structure on $M$, that is  a homotopy commutative diagram
\[
\xymatrix{M \ar[d]_{\i} \ar[r]^{\nu} &
\BSO
 \ar@2{-->}[dl]_{\eta} \ar[d]^{W_3} \\
X \ar[r]_\a  & K(\ZZ, 3), } 
\]
where $W_3$ is the classifying map of the principal 
$K(\ZZ, 2)$-bundle $\BSpin^c \to \BSO$  associated to
the third  integral  Stiefel-Whitney class and   $\eta$ is a homotopy
between $W_3 \circ \nu $ and $\a \circ \i$.  
\end{enumerate}
 Two  $\a$-twisted $Spin^c$  structures  $\eta$ and
$\eta'$ on $M$ are called equivalent if there is a homotopy between $\eta$ and $\eta'$.
\end{definition}  

\begin{remark}
\begin{enumerate}
\item Definition    of  twisted $Spin^c$   manifolds   over $X$  is previously  given by Douglas  in \cite{Dou}  using Hopkins-Singer's differential cochains developed in \cite{HopSin}.   Here in Definition 
\ref{twisted:cob}, we define an $\a$-twisted $Spin^c$  structure on $M$ to be a homotopy
between $W_3 \circ \nu $ and $\a \circ \i$ as it induces a canonical isomorphism $\eta_*$ (\ref{2})
which will play an important role in our definition of the analytical index.
\item Let  $(W, \nu, \i, \eta)$ be  an $\a$-twisted $Spin^c$  manifold with boundary over $X$, then there
is a natural $\a$-twisted $Spin^c$   structure on the boundary $\pa W$ with outer normal 
orientation, which is the restriction of the $\a$-twisted $Spin^c$   structure on $W$:
\ba\label{bound}
\xymatrix{\pa W \ar[d]_{\i|_{\pa W} } \ar[r]^{\nu|_{\pa W}} &
\BSO
 \ar@2{-->}[dl]_{\eta|_{\pa W}} \ar[d]^{W_3} \\
X \ar[r]_\a  & K(\ZZ, 3).} 
\na 
\item Given an oriented  real vector bundle $E$ of rank $k$ over a smooth manifold $M$, the classifying map of $E$
\[
\nu_E: M \longrightarrow \BSO(k)
\]
and the principal $K(\ZZ, 2)$-bundle $\BSpin^c(k)  \to \BSO(k)$ define  an associated  twisting
\[
W_3\circ \nu_E:  M \longrightarrow \BSO(k) \longrightarrow K(\ZZ, 3).
\]
\end{enumerate}
\end{remark}

\begin{proposition} \label{condition}
Given a smooth,  oriented and compact  n-dimensional manifold $M$ and a paracompact space $X$ with a   twisting
$\a: X\to K(\ZZ, 3) $, then 
\begin{enumerate}  \item 
$M$ admits an $\a$-twisted $Spin^c$ structure  if and only if  there exists
a continuous map
$\i: M\to X$ such that 
\ba\label{cond}
\i^*([\a]) +  W_3(M)=0
\na
in $H^3(M, \ZZ)$. 
Here $W_3(M)$ is the third integral Stiefel-Whitney class, 
\[
W_3(M) = \b (w_2(M))\]
with $\b: H^2(M, \ZZ_2)\to H^3(M, \ZZ)$   the Bockstein homomorphism and
$w_2(M)$  the second Stiefel-Whitney class of $TM$.  (The condition (\ref{cond}) is the Freed-Witten anomaly cancellation
condition for Type II D-branes (Cf. \cite{FreWit}).)
\item If $\i^*([\a]) + W_3(M) =0$, then the  set  of equivalence classes of $\a$-twisted $Spin^c$ structures
on $M$ are in one-to-one correspondence with elements in $H^2(M, \ZZ)$.
\end{enumerate}
\end{proposition}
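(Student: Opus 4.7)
The entire proposition reduces to obstruction theory in the Eilenberg--MacLane space $K(\ZZ, 3)$, since by Definition \ref{twisted:cob} an $\a$-twisted $Spin^c$ structure on $M$ (relative to $\i$) is nothing but a homotopy between the two maps $W_3 \circ \nu$ and $\a \circ \i$ from $M$ to $K(\ZZ, 3)$.

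For part (1), I would use the fundamental identification $[M, K(\ZZ, 3)] \cong H^3(M, \ZZ)$. Under it, the class of $W_3\circ \nu$ is $W_3(\nu)$, and because $TM\oplus \nu$ is stably trivial we have $w_2(\nu)=w_2(TM)$ in $H^2(M,\ZZ_2)$; applying the Bockstein $\b$ gives $W_3(\nu)=W_3(TM)=W_3(M)$. Since two maps into a $K(\ZZ,n)$ are homotopic precisely when they represent the same cohomology class, the existence of a homotopy $\eta$ between $W_3\circ \nu$ and $\a\circ \i$ is equivalent to $\i^*([\a])=W_3(M)$ in $H^3(M,\ZZ)$. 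The class $W_3(M)$ is $2$-torsion, being in the image of $\b:H^2(M,\ZZ_2)\to H^3(M,\ZZ)$, so this equality can be rewritten as the anomaly condition (\ref{cond}), namely $\i^*([\a])+W_3(M)=0$.

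For part (2), fix $\i$ satisfying (\ref{cond}) and choose a reference $\a$-twisted $Spin^c$ structure $\eta_0$. I would then set up the standard torsor bijection with $[M,\Omega K(\ZZ,3)]=[M,K(\ZZ,2)]\cong H^2(M,\ZZ)$. Concretely, given any other $\a$-twisted $Spin^c$ structure $\eta$, concatenate $\eta$ with the reverse of $\eta_0$ to obtain a map $M\times S^1 \to K(\ZZ,3)$ restricting to $W_3\circ \nu$ on $M\times\{*\}$; by adjunction this yields a class in $[M, K(\ZZ,2)] = H^2(M,\ZZ)$. The assignment is injective because a null-homotopy of the resulting loop provides a homotopy of homotopies between $\eta$ and $\eta_0$, and surjective because any map $M\to K(\ZZ,2)$ adjoints to a self-homotopy of $W_3\circ \nu$ that can be spliced onto $\eta_0$.

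The only real subtlety is the book-keeping in part (2): verifying that equivalence of $\a$-twisted $Spin^c$ structures in the sense of Definition \ref{twisted:cob} coincides precisely with rel-endpoint homotopy classes of homotopies, so that the torsor structure descends to a genuine (basepoint-dependent) bijection with $H^2(M,\ZZ)$. Once this identification is made, both statements are immediate consequences of the representability of cohomology by Eilenberg--MacLane spaces.
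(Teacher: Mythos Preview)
Your proposal is correct and follows essentially the same route as the paper. For part~(1) the paper likewise reduces to the representability $[M,K(\ZZ,3)]\cong H^3(M,\ZZ)$, identifies the class of $W_3\circ\nu$ with $W_3$ of the stable normal bundle via an embedding $M\hookrightarrow\RR^{n+k}$, and uses triviality of $TM\oplus\nu(i_k)$ to get $W_3(M)+W_3(\nu(i_k))=0$; your phrasing via $w_2(\nu)=w_2(TM)$ and the $2$-torsion of $W_3(M)$ is the same computation. For part~(2) the paper states directly that homotopy classes of homotopies form an affine space over $[\Sigma M,K(\ZZ,3)]\cong[M,K(\ZZ,2)]\cong H^2(M,\ZZ)$, which is exactly the torsor you construct explicitly by concatenation and loop-space adjunction.
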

\begin{proof}  If $M$ admits an  $\a$-twisted $Spin^c$ structure,  then
$W_3\circ \nu $ and $\a \circ \i$ are homotopic  as maps from $M$ to $K(\ZZ, 3)$. This  means
that  the third integral Stiefel-Whitney class of the stable normal bundle is equal to
$\i^*([\a])$. As $M$ is compact, we can find  
an embedding  
 \[
 i_k: M^n  \longrightarrow \RR^{n+k} 
 \]
for a sufficiently large $k$.   Denote by $\nu(i_k)$ the normal of $i_k$, then we
know that $W_3(\nu(i_k)) = \i^*([\a])$, and 
\[
\nu(i_k)  \oplus TM  \cong  i_k^* (T \RR^{n+k} )
\]
is a trivial bundle, which implies $W_3(M) + W_3(\nu(i_k)) =0$.   Thus, $ \i^*([\a]) +  W_3(M)=0$.

Conversely, if $ \i^*([\a]) +  W_3(M)=0$, then $W_3(\nu(i_k))$ agrees
with $\i^*([\a])$, hence, the classifying map $\nu_k: M\to \BSO(k)$ makes the following
diagram  homotopy commutative  for some homotopy $\eta$:
\[
\xymatrix{M \ar[d]_{\i} \ar[r]^{\nu_k} & 
\BSO(k)
 \ar@2{-->}[dl]_{\eta} \ar[d]^{W_3} \\
X \ar[r]_\a  & K(\ZZ, 3). } 
\]
This defines  an $\a$-twisted $Spin^c$ structure  on $M$ by letting $k\to\infty$.

The set of equivalence classes of $\a$-twisted $Spin^c$ structures on $M$ 
corresponds to   the set of homotopy classes of homotopies between 
$W_3\circ \nu $ and $\a \circ \i$. The latter is an affine space over 
\[
[\Sigma M, K(\ZZ, 3) ] \cong [M, K(\ZZ, 2)]
\]
here $\Sigma$ denotes the suspension.   As $[M, K(\ZZ, 2)]\cong H^2(M, \ZZ)$, so
$ H^2(M, \ZZ) $ acts freely and transitively on the
set of equivalence classes of $\a$-twisted $Spin^c$ structures on $M$.  
\end{proof}

\begin{remark} \begin{enumerate}
\item If the twisting $\a: X\to K(\ZZ, 3)$  is homotopic to the trivial map, then an 
$\a$-twisted $Spin^c$ structure  on $M$  is equivalent to a $Spin^c$ structure  on $M$.
\item Let  $\tau_X: X \to \BSO$  be a classifying  map of the stable tangent bundle 
of $X$, then a $W_3\circ \tau_X$-twisted $Spin^c$ structure  on $M$ is equivalent to
a K-oriented map from $M$ to $X$.
\item  Let $(M, \nu,  \i, \eta)$ be an $\a$-twisted $Spin^c$   manifold   over $X$. Any
K-oriented map $f: M' \to M$ defines a canonical   $\a$-twisted $Spin^c$  structure on $M'$.
\end{enumerate}
\end{remark}

  Recall that
for   $k\in \{0, 1, 2, \cdots\}$ and    a separable $C^*$-algebra $A$,   Kasparov's   K-homology group 
 \[
 KK^{ k} (A, \CC) \cong KK(A, \CL(\CC^k)) 
 \]
  is the abelian group generated by 
 unitary equivalence classes of $\CL(\CC^k)$-graded Fredholm modules over $A$ modulo certain relations (See
 \cite{HigRoe} for details). Then  $KK^{ev}(A, \CC)$ and $KK^{odd}(A, \CC)$ denote the direct
 limits under the periodicity maps
 \[
 KK^{ev} (A, \CC) = \disp{ \varinjlim_{k } } KK^{ 2k}(A, \CC), \ \  \text{and} \ \ 
 KK^{odd} (A, \CC) = \disp{ \varinjlim_{k } } KK^{ 2k+1 }(A, \CC).
 \] 

\begin{definition} Let $X$ be a paracompact Hausdorff space with a twisting $\a: X\to K(\ZZ, 3)$. Let $\cP_\a(\cK)$
be the associated bundle of compact operators on $X$. Analytical twisted K-homology, denoted by 
$K^a_{ev/odd}(X,  \a)$, is defined to be
\[
K^a_{ev/odd} (X, \a) := KK^{ev/odd}\bigl  (C_c(X, \cP_\a(\cK)), \CC \bigr), 
\]
 the 
Kasparov's $\ZZ_2$-graded K-homology of
the   $C^*$-algebra $C_c(X, \cP_\a(\cK))$.   Given a closed subspace $A$ of $X$, the relative 
twisted K-homology $K^a_{ev/odd} (X,  A; \a) $ is defined to be
\[KK^{ev/odd}\bigl  (C_c(X-A, \cP_\a(\cK)), \CC\bigr). \]
 Analytical twisted K-homology is
a 2-periodic generalized homology theory.  
\end{definition}

 We first discuss the relationship
 between the stable normal bundle of $M$ and its stable tangent bundle, and apply it to study
 the corresponding twisted K-homology groups. Note that the
 classifying space of $SO(k)$ is given by the direct limit
 \[
 \BSO(k) =  \disp{ \varinjlim_{m } } Gr(k, m+k)
 \]
  where  $Gr(k, m+k)$ is the Grassmann manifold of oriented $k$-planes in $\RR^{k+m}$.  The classifying 
  space of the stable special orthogonal group is 
    $\disp{ \varinjlim_{k } } \BSO(k)$, and will be denoted by $\BSO$.

   The map 
 $ I_{k, m}: Gr(k, m+k) \longrightarrow Gr(m, k+m)$
  of assigning to each oriented $k$-plane  in $\RR^{k+m}$ to its orthogonal $m$-plane induces
  a map 
  \[
  I: \BSO \longrightarrow \BSO
  \]
  with $I^2$ the identity map. 
   
 For a  compact n-dimensional manifold $M^n$, 
  the stable normal bundle is represented by  the normal bundle
 of an embedding  
$ i_k: M^n  \longrightarrow \RR^{n+k} $
for any sufficiently large $k$. The  normal bundle $\nu(i_k)$ of $i_k$ is the quotient of the pull-back
of the tangent bundle $T \RR^{n+k}  = \RR^{n+k}  \times \RR^{n+k} $ by
the tangent bundle $TM$.    Then the  normal map 
\[
\nu_k: M  \longrightarrow Gr(k, k+n)
\]
 and the tangent map  
 \[
\tau_k: M  \longrightarrow Gr(n, k+n)
\]
are related to each other  by $\tau_k = I_{k, n}\circ \nu_k$. So 
 the  classifying map for the stable normal bundle
 \[
 \nu:   M\longrightarrow  \BSO 
 \]
 and  the  classifying map of the stable tangent bundle
\[
\tau: M\longrightarrow  \BSO
\]
are related by $\tau = I\circ \nu$.  Thus, we have a  natural isomorphism on the associated
bundles of compact operators
\ba\label{tau:nu}
I^*: \tau^*\BSpin^c (\cK) \longrightarrow \nu^* \BSpin^c (\cK).
\na
This determines an isomorphism,  denoted
by $I_*$,   on the 
corresponding twisted K-homology groups
\ba\label{1}
I_*: K^a_{ev/odd} (M, W_3 \circ \tau ) \cong K^a_{ev/odd}(M,     W_3 \circ \nu  ) .
\na
   
   \begin{remark} Given an embedding $i_k: M \to \RR^{n+k}$ with the normal bundle $N$ , the natural isomorphism
   \[
   TM \oplus N \oplus N \cong \underline{\RR}^{n+k} \oplus N
   \]
   and the canonical $Spin^c$ structure on $N \oplus N$ define a canonical homotopy between 
   $W_3\circ \tau$ and $W_3 \circ \nu$. The isomorphism (\ref{1}) is induced by this  canonical homotopy.
 \end{remark}    
    
 For a  Riemannian  manifold   $M$,  denote by  $\CL (TM)$ the bundle of complex Clifford algebras of $TM$ over $M$.  As algebras of the sections,   $C(M, \CL (TM)) $    is Morita equivalent to 
 $C(M, \tau^* \BSpin^c (\cK) )$. Hence, we have   a canonical isomorphism
 \[ 
 K^a_{ev/odd} (M, W_3\circ \tau ) \cong  KK^{ev/odd} ( C(M, \CL (M) ), \CC) 
 \]
 with the degree shift by $dim M  (mod \ 2)$. 
Applying  Kasparov's Poincar\'{e} duality  (Cf. \cite{Kas1}) 
\[
 KK^{ev/odd} (\CC, C(M)) \cong KK^{ev/odd} (C(M, \CL (M) ), \CC), 
 \]
we obtain  a canonical isomorphism 
 \[
PD:  K^0(M)  \cong K^a_{ev/odd} (M, W_3\circ \tau ), 
\]
with the degree shift by $dim M  (mod \ 2)$. 
 The fundamental class   $[M] \in K^a_{ev/odd} (M, W_3\circ \tau )$ is 
the Poincar\'{e} dual of the unit element in $K^0(M)$. Note that
 $[M]\in K^a_{ev}(M, W_3(M))$ if $M$ is even dimensional and $[M]\in K^a_{odd}(M, W_3(M))$ if $M$ is odd dimensional. The cap product 
 \[\cap: 
 K^a_{ev/odd} (M, W_3\circ \tau ) \otimes K^0(M)  \longrightarrow K^a_{ev/odd} (M, W_3\circ \tau )  
 \]
 is  defined by the Kasparov product. We remark that the cap product of the fundamental class $[M]$ and
 $[E] \in K^0(M)$ is given by 
 \[
 [M]\cap [E] = PD ([E]).
 \]

  Given an    $\a$-twisted $Spin^c$  manifold $(M, \nu,  \i, \eta)$   over $X$,  the homotopy $\eta$ induces an isomorphism
$\nu^* \BSpin^c   \cong   \i^* \cP_\a $ as principal $K(\ZZ, 2)$-bundles  on $M$, hence defines   an isomorphism 
\[\xymatrix{
   \nu^* \BSpin^c (\cK)   \ar[rr]^{  \eta^*}_{ \cong }&&  \i^* \cP_\a  (\cK) }
\]
as bundles of $C^*$-algebras  on  $M$.    This isomorphism
determines a canonical isomorphism 
between the corresponding continuous trace $C^*$-algebras 
\[
C(M, \nu^* \BSpin^c (\cK))  \cong  C(M,  \i^* \cP_\a(\cK) ).
\]
 Therefore, we have a canonical isomorphism
\ba\label{2}
\eta_*:   K^a_{ev/odd} (M, W_3   \circ \nu    )  \cong   K^a_{ev/odd} (M,      \a \circ \i  ). 
\na
Notice that the natural push-forward map in analytic K-homology theory is 
 \ba\label{3}
 \i_*: K^a_{ev/odd} ( M, \a \circ \i ) \longrightarrow 
 K^a_{ev/odd}( X,  \a ).
 \na
We can introduce a notion of analytical index for any $\a$-twisted $Spin^c$   manifold   over $X$, taking values in
analytical twisted K-homology of $(X, \a)$.

\begin{definition}\label{index:analytical} Given an $\a$-twisted $Spin^c$ {\bf closed}  manifold $(M, \nu, \i, \eta)$ and $[E]\in K^0(M)$,
 we define its
analytical index  
\[
 \ind_a((M, \nu, \i, \eta), [E]) \in K^a_{ev/odd} (X, \a)
\]
 to be the image of the cap product  $[M]\cap[E] \in K^a_{ev/odd} (M, W_3\circ \tau )$    under the  maps (\ref{1}), (\ref{2}) and (\ref{3}):
\[\xymatrix{
  K^a_{ev/odd} (M, W_3\circ \tau ) \ 
 \ar[rr]^{ \qquad  \i_* \circ \eta_* \circ I_*} && K^a_{ev/odd} (X, \a).}
 \]
 \end{definition}
 
 The analytical index enjoys the following properties.

\begin{proposition}
\label{a-index:pro} 
\begin{enumerate}
\item The analytical index  $\ind_a((M, \nu, \i, \eta) ,  [E]) $ depends only on the equivalence
class of the $\a$-twisted $Spin^c$ structure $\eta$.   
\item (Disjoint union and Direct sum) Given a pair of $\a$-twisted $Spin^c$  manifolds
$(M_1, \nu_1, \i_1, \eta_1)$ and $ (M_2, \nu_2, \i_2, \eta_2)$, and $[E_i] \in K^0(M_i)$. Then
\[\begin{array}{lll}
&&  \ind_a \bigl( (M_2, \nu_2, \i_2,   \eta_2 ) \sqcup (M_2, \nu_2, \i_2,   \eta_2 ) , [E_1] \sqcup [E_2] \bigr)  \\[2mm]
&=&   \ind_a((M_1, \nu_1, \i_1,   \eta_1 ) , [E_1])  + 
 \ind_a((M_2, \nu_2, \i_2,   \eta_2 ) , [E_2]) .
 \end{array}
\]
\item (Bordism invariance)  If $(W, \nu, \i, \eta)$ is an $\a$-twisted $Spin^c$  manifold with boundary over $X$ and $[E]\in K^0(W)$, then
\[
\ind_a (\pa W, \pa  \nu , \pa \i , \pa \eta ),   [E|_{\pa W} ] )  =0.
\]
\end{enumerate}
\end{proposition}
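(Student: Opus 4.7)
My plan is to dispatch parts (1) and (2) directly from the definitions and then concentrate on the bordism invariance (3), which carries the real content. For (1), the isomorphism $\eta_*$ in (\ref{2}) is induced by the bundle-of-$C^*$-algebras isomorphism $\nu^*\BSpin^c(\cK) \cong \i^*\cP_\a(\cK)$ coming from the homotopy $\eta$ between $W_3 \circ \nu$ and $\a \circ \i$. A homotopy of homotopies $\eta \simeq \eta'$ gives a concordance of these bundle isomorphisms; by the homotopy invariance of $KK$-theory this yields $\eta_* = \eta'_*$. The maps $I_*$ and $\i_*$ in (\ref{1}) and (\ref{3}) are independent of $\eta$, so the analytical index only depends on the equivalence class. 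For (2), the continuous-trace algebra of the disjoint union splits as a direct sum, giving
\[
K^a_*(M_1\sqcup M_2, W_3\circ\tau) = K^a_*(M_1,W_3\circ\tau_1) \oplus K^a_*(M_2,W_3\circ\tau_2);
\]
Poincar\'e duality, $I_*$, $\eta_*$ and $\i_*$ all respect this splitting, the fundamental class of a disjoint union is the direct sum of fundamental classes, and $[E_1\sqcup E_2]|_{M_i}=[E_i]$, so additivity follows.

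For the bordism invariance (3), the strategy is to express $[\pa W]$ as the image of a relative fundamental class $[W,\pa W]$ under the connecting homomorphism of the pair $(W,\pa W)$ and then invoke exactness. Concretely, consider the six-term exact sequence in analytical twisted K-homology
\[
\cdots \to K^a_*(W,\pa W; W_3\circ\tau_W) \stackrel{\pa}{\to} K^a_{*-1}(\pa W; W_3\circ\tau_{\pa W}) \stackrel{j_*}{\to} K^a_{*-1}(W; W_3\circ\tau_W) \to \cdots,
\]
where $j:\pa W \hookrightarrow W$ is the inclusion. Coupling a collared Dirac operator on $W$ to the local Morita data provided by $\nu_W$ yields a relative class $[W,\pa W]$ whose boundary $\pa[W,\pa W]$ is the fundamental class $[\pa W]$ equipped with its induced orientation and $\a$-twisted $Spin^c$ structure from (\ref{bound}). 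By the $K^0(W)$-module structure and the projection formula $[\pa W]\cap j^*[E] = \pa([W,\pa W]\cap [E])$, one obtains
\[
j_*\bigl([\pa W]\cap [E|_{\pa W}]\bigr) = j_* \pa\bigl([W,\pa W]\cap [E]\bigr) = 0
\]
by exactness. Finally, the push-forward $\i_*$ of $\pa W \to X$ factors as $\i_{W*}\circ j_*$, because (\ref{bound}) realizes the $\a$-twisted $Spin^c$ data on $\pa W$ as the literal restriction of that on $W$, and the canonical isomorphisms $I_*$ and $\eta_*$ are natural with respect to $j$ and $\pa$. Chasing through these identifications gives $\ind_a((\pa W,\pa\nu,\pa\i,\pa\eta),[E|_{\pa W}]) = 0$.

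The main obstacle I anticipate is the construction of the relative fundamental class $[W,\pa W]$ in twisted analytic K-homology together with the identity $\pa[W,\pa W]=[\pa W]$. In the untwisted setting this is classical, either through APS-type boundary conditions on a collared Dirac operator (as in Higson--Roe) or via Kasparov's Poincar\'e duality for $C_0(W\setminus\pa W)$; in the twisted case one must work with Dirac operators coupled to a bundle gerbe module trivializing $W_3\circ\tau_W$ on an open cover, or equivalently with Morita equivalences of continuous-trace algebras along the collar. All the naturality statements needed (for $I_*$, $\eta_*$, $j_*$ and $\pa$) are formal once this relative class is in hand, since by (\ref{bound}) the twisted $Spin^c$ datum on $\pa W$ is nothing but the restriction of that on $W$, and so the induced continuous-trace algebra isomorphisms restrict accordingly.
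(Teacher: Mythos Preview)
Your proposal is correct and follows essentially the same route as the paper: for part (3) both arguments use the long exact sequence of the pair $(W,\partial W)$ in analytic twisted K-homology together with the identity $PD(i^*[E])=\partial\bigl(PD([E])\bigr)$ (your projection formula), exactness $i_*\circ\partial=0$, and naturality of $I_*$, $\eta_*$, $\iota_*$ with respect to the boundary inclusion. The one simplification in the paper is that it sidesteps the explicit construction of the relative fundamental class you flag as the main obstacle, instead packaging the same content as commutativity of Poincar\'e duality with the two long exact sequences, which follows formally by applying the Kasparov functor to the short exact sequence $0\to C_0(W,\mathbf{Cliff}(W))\to C(W,\mathbf{Cliff}(W))\to C(\partial W,\mathbf{Cliff}(\partial W))\to 0$.
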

\begin{proof}  \  The $\a$-twisted $Spin^c$ structure $\eta$ enters the definition of   $\ind_a((M, \nu, \i, \eta),  [E]) $
only through 
\[
  \eta_*:   K^a_{ev/odd} (M, W_3   \circ \nu    )  \cong   K^a_{ev/odd} (M,      \a \circ \i  ).
  \]
  This isomorphism  depends  only on the homotopy class of $\eta$. So  Claim (1) is obvious.

Claim (2) follows from the disjoint union and direct sum property of the fundamental classes and the cap product.
 
To establish Claim (3), let $(W, \nu, \i, \eta)$ be  an $\a$-twisted $Spin^c$  manifold with boundary over $X$, and denote
its boundary by $M=\pa W$ with the induced  $\a$-twisted $Spin^c$ structure $(\pa \nu, \pa \i, \pa \eta)$. 
Let  $i: M \to W$  be  the boundary inclusion map.   The exact sequence  in topological  K-theory
and analytical   K-homology.   are related  through Poincar\'{e} duality  (assume  that $W$ is odd dimensional) as in the following
  commutative diagram
\ba\label{commu:0}
\xymatrix{
K^1(W, M) \ar[d]^{PD} & K^0(M) \ar[l]\ar[d]^{PD} &  K^0(W) \ar[l]^{i^*} \ar[d]^{PD}\\
K^a_0(W,  W_3 \circ \tau_W)   & K^a_0(M, W_3\circ \tau_M) \ar[l]^{i_*}   &  K^a_1(W,M; W_3\circ \tau_W) \ar[l]^{\pa}, }
\na
where $\tau_M$ and $\tau_W$ are classifying maps of the stable tangent bundles of $M$ and
$W$ respectively.   One could get this
K-homology  exact sequence by applying the Kasparov  KK-functor to the following
short exact sequence of $C^*$-algebras:
\[
0\to C_0(W, \CL(W))\longrightarrow C(W, \CL(W))  \longrightarrow C( M,  \CL(M))\to 0,
\]
where $C_0(W, \CL(W))$ denotes the $C^*$-algebra of  continuous sections of $\CL(W)$ vanishing at the boundary
$M$. The relative    analytical   K-homology  $KK^{ev/odd} (C_0(W, \CL(W)) , \CC)$  is 
isomorphic to $K^a_{ev/odd} (W, M; W_3\circ \tau )$, and hence isomorphic to  $K^a_{ev/odd} (W, M; W_3\circ \nu )$ under
(\ref{1}). 
 
   From (\ref{commu:0})  that the Poincar\'{e} dual  of   $$ [i^* E] \in K^0(M) $$
is mapped to zero in $K^a_0(W,  W_3\circ \tau_W)$ for $[E]\in K^0(W)$ under the map $i_*$:
\ba\label{id:0}
i_*\circ PD \circ i^* ([E]) = i_* \circ \pa \circ PD ([E]) =0.
\na

Notice that  $\ind_a \bigl( (M,  \pa \nu , \pa \i, \pa \eta), [E|_M]  \bigr)$
is image of  the   class $PD (i^*[E])$ under  the following sequence of maps
\[\xymatrix{
K^a_0(M, W_3\circ \tau_M) \ar[r]^{I_*} & K^a_0(M, W_3\circ \pa \nu)\ar[r]^{(\pa \eta)_*} &   K^a_0(M , \a\circ \pa \i)
\ar[r]^{(\pa \i)_*} & K^a_0(X, \a), } 
\]
and the inclusion map  $i: M \to W$ induces the following commutative diagram
\[
\xymatrix{ K^a_0(M, W_3\circ \tau_M) \ar[d]^{i_*}  \ar[r]^{I_*}  &  K^a_0(M, W_3\circ \pa \nu)\ar[d]^{i_*} \ar[r]^{(\pa \eta)_*}
& K^a_0(M, \a \circ \pa \i) \ar[d]^{i_*} \ar[dr]^{(\pa \i)_*}\\
K^a_0(W,  W_3\circ \tau_W)  \ar[r]^{I_*}    &  K^a_0(W, W_3\circ   \nu)  \ar[r]^{\eta_*}  &  
K^a_0(W, \a \circ \nu) \ar[r]^{\i_*}  &  
K^a_0(X, \a). } 
\]
Therefore, we conclude that
\[
\begin{array}{lllll}
&& \ind_a  \bigl( (M,  \pa \nu , \pa \i, \pa \eta), [E|_M] \bigr) &\qquad&\\[2mm]
&=& (\pa \i)_* \circ (\pa \eta)_* \circ I_* \circ PD (i^*[E]) &\qquad&\text{(Definition \ref{index:analytical}) } \\[2mm]
&=&  \i_*\circ \eta_* \circ I_* \circ i_*  \circ PD (i^*[E])  &\qquad&\text{(The above commutative diagram)} \\[2mm]
&=&  0  &\qquad&\text{(\ref{id:0})} \\[2mm]
\end{array}
\]

\end{proof}

\begin{remark} 
Given an $\a$-twisted $Spin^c$ structure $\eta$ on $(M, \nu, \i)$ and a complex line bundle
$L$ over $M$, denote  by $c_1 \cdot [\eta]  $ the action of the first Chern class $ c_1 = c_1(L)
\in H^2(M, \ZZ)$ on  the homotopy class of $\eta$, then the analytical index depends
on the choice of equivalence classes of $\a$-twisted $Spin^c$ structures through the following
formulae
 \[
\ind_a((M, \nu, \i, c_1\cdot [\eta]) , [E]) = \ind_a((M, \nu, \i,  [\eta]) ,  ([L]\otimes [E]) ).
\]
\end{remark}

 \section{Twisted $Spin^c$  bordism and topological index}  \label{section:4}

Given a manifold $X$ with a twisting $\a: X\to K(\ZZ, 3)$, $\a$-twisted $Spin^c$  manifolds over $X$  form a  bordism category,
 called the $\a$-twisted $Spin^c$    brodism over $(X, \a)$,  whose 
 objects are compact smooth  manifolds   over $X$ with an $\a$-twisted $Spin^c$ structure
 as in Definition \ref{twisted:cob}. 
 A morphism between   $\a$-twisted $Spin^c$  manifolds
$(M_1, \nu_1, \i_1, \eta_1)$ and $ (M_2, \nu_2, \i_2, \eta_2)$  is
a boundary preserving  continuous map $f: M_1 \to M_2$ and  the following diagram 
 \ba\label{morphism}
   \xymatrix{
    M_1  \ar@/{}_{1pc}/[ddr]_{\i_1} \ar@/{}^{1pc}/[drr]^{\nu_1}
       \ar[dr]^{f }            \\
      & M_2 \ar[d]_{\i_2} \ar[r]^{\nu_2}   
                     & \BSO \ar@2{-->}[dl]_{\eta_2}\ar[d]^{W_3}       \\
      & X \ar[r]_\a   & K(\ZZ, 3)              }
   \na
 is a homotopy commutative diagram such that
 \begin{enumerate}
\item   $\nu_1$ is homotopic to $\nu_2 \circ f$ through a continuous  map $\nu: M_1 \times [0, 1] \to \BSO$;
\item  $\i_2 \circ f$ is homotopic to $\i_1$   through continuous   map $\i : M_1 \times [0, 1] \to X$;
\item   the composition of homotopies  $( \a \circ \i ) * (\eta_2 \circ (f\times Id) ) * (W_3 \circ \nu)$
 is homotopic to $\eta_1$.
   \end{enumerate} 
   The boundary functor $\pa$ applied to an $\a$-twisted $Spin^c$  manifold  $(M, \nu, \i, \eta)$ is the
   manifold  $\pa M$  with outer normal orientation  and  the restriction of  the $\a$-twisted $Spin^c$ 
   structure to $M$. 
    
   Two  $\a$-twisted $Spin^c$  manifolds
$(M_1, \nu_1, \i_1, \eta_1)$ and $ (M_2, \nu_2, \i_2, \eta_2)$  are called isomorphic
if  there exists a diffeomorphism $f: M_1 \to M_2$ such that  the diagram (\ref{morphism}) is
a homotopy commutative diagram.
  
\begin{definition} We say that an $\a$-twisted $Spin^c$  manifold $(M, \nu, \i, \eta)$ is null-bordant
if there exists an $\a$-twisted $Spin^c$  manifold  $W$    whose
boundary is $ (M, \nu, \i, \eta)$ in the sense of (\ref{bound}). 
We define the  $\a$-twisted $Spin^c$  bordism  group of $X$, denoted by $\Omega^{Spin^c}(X, \a)$,
to be the set of all  isomorphism classes of   closed  $\a$-twisted $Spin^c$  manifolds over $X$ modulo
null-bordism, with the sum given by the disjoint union.
 \end{definition}

The subgroup of  isomorphism classes of  n-dimensional  closed  $\a$-twisted $Spin^c$  manifolds over $X$
will be denoted  $\Omega^{Spin^c}_n(X, \a)$. Define
\[\begin{array}{c}
 \Omega^{Spin^c}_{ev} (X, \a)=  \bigoplus_k  \Omega^{Spin^c}_{2k}(X, \a)\\[2mm]
  \Omega^{Spin^c}_{odd} (X, \a) = \bigoplus_k  \Omega^{Spin^c}_{2k+1}(X, \a).
  \end{array}
  \]

\begin{proposition}The analytical index defined in the previous section  induces a homomorphism
\ba\label{a-index}
\ind_a:  \Omega^{Spin^c}_{ev/odd} (X, \a)  \longrightarrow K^a_{ev/odd} (X, \a)
\na 
 \end{proposition}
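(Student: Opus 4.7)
The plan is to define the homomorphism on cycles by
\[
\ind_a[(M, \nu, \i, \eta)] := \ind_a\bigl((M, \nu, \i, \eta), [\underline{\CC}_M]\bigr) \in K^a_{ev/odd}(X, \a),
\]
where $[\underline{\CC}_M] = 1 \in K^0(M)$ is the class of the trivial line bundle, and then to verify that this assignment descends to the bordism group and is additive under disjoint union. Since the sum in $\Omega^{Spin^c}_{ev/odd}(X, \a)$ is defined via disjoint union, the homomorphism property amounts to well-definedness on bordism classes together with additivity under $\sqcup$.

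First I would establish well-definedness on isomorphism classes of cycles. Given a diffeomorphism $f: M_1 \to M_2$ realizing an isomorphism in the bordism category, $f$ induces natural isomorphisms on all of the twisted K-homology groups entering Definition \ref{index:analytical}, so the issue is compatibility with the three ingredients $I_*$, $\eta_*$, and $\i_*$. The three homotopies packaged into conditions (1)--(3) following the diagram (\ref{morphism}) translate respectively into: a canonical identification of the two versions of $I_*$ (from the homotopy of $\nu$), homotopy invariance of the pushforwards $(\i_j)_*$ (from the homotopy of $\i$), and an equivalence of $\a$-twisted $Spin^c$ structures between $\eta_1$ and $f^*\eta_2$ (from condition (3)). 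Proposition \ref{a-index:pro}(1) then forces equality of the corresponding analytical indices.

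Next, bordism invariance follows directly from Proposition \ref{a-index:pro}(3) applied with $[E] = [\underline{\CC}_W] \in K^0(W)$, whose restriction to $\pa W$ is $[\underline{\CC}_{\pa W}]$, so a null-bordant cycle maps to zero. Additivity under disjoint union follows from Proposition \ref{a-index:pro}(2) applied with $[E_i] = [\underline{\CC}_{M_i}]$. Combining these two items with the first step produces the claimed well-defined group homomorphism $\ind_a: \Omega^{Spin^c}_{ev/odd}(X, \a) \to K^a_{ev/odd}(X, \a)$.

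The main obstacle is the first step: one must verify that the homotopies built into the definition of a morphism of $\a$-twisted $Spin^c$ manifolds really do induce the canonical identifications used in Definition \ref{index:analytical}. The delicate point is the interaction of $I_*$ in (\ref{1}) with homotopies of the stable normal classifying map $\nu$, because $I_*$ was defined using an auxiliary embedding. The required fact is that the isomorphism (\ref{tau:nu}) of bundles of compact operators depends only on the stable homotopy class of $\nu$, so any homotopy of $\nu$ induces the identity after passage to K-homology. Once this naturality is established, the remaining compatibilities are routine consequences of naturality of the Kasparov product and functoriality of the pushforward in analytic K-homology.
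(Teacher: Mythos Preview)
Your proposal is correct and follows essentially the same approach as the paper: define $\ind_a$ on cycles via the trivial line bundle, verify isomorphism invariance using the homotopies built into the definition of a morphism, and then invoke Proposition~\ref{a-index:pro} for bordism invariance and additivity. The paper carries out the isomorphism-invariance step by writing down explicit commutative squares for $I_*$, $\eta_*$, and $\i_*$ under the diffeomorphism-induced map $f_*$ (together with $f_*[M_1]=[M_2]$), which is exactly the naturality you outline in your first and third paragraphs; your caution about the dependence of $I_*$ on the embedding is well placed and is handled in the paper implicitly through the stable homotopy of $\nu$.
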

 \begin{proof} Let $(M, \i, \nu, \eta)$ be  $\a$-twisted $Spin^c$  manifold  over $X$, representing an
 element in the  $\a$-twisted $Spin^c$    bordism group $ \Omega^{Spin^c}_{ev/odd}  (X, \a)$. Define
 \[
 \ind_a(M, \i, \nu, \eta)  = \ind_a( (M, \i, \nu, \eta), [\underline{\CC}]) \in K^a_{ev/odd} (X, \a),
 \]
 where $\underline{\CC}$ denotes the trivial line bundle over $M$, representing the
 unit element in $K(M)$. We need to show that for a pair of isomorphic  objects 
 \[(M_1, \i_1, \nu_1, \eta_1) \  \text{and }\  (M_2, \i_2, \nu_2, \eta_2)
 \]
 in the  $\a$-twisted $Spin^c$  bordism  category over $X$,  we have
 \[
 \ind_a(M_1, \i_1, \nu_1, \eta_1)= \ind_a  (M_2, \i_2, \nu_2, \eta_2).
 \]
 Let $f$ be  a diffeomorphism  from $M_1$ to $M_2$ such that   (\ref{morphism}) is
 a homotopy commutative diagram.  
 Let $\tau_1$ and $\tau_2$ be classifying maps of the stable tangent bundles of $M_1$ and $M_2$
respectively.  
  The homotopy between $\nu_1$ and $\nu_2\circ f$ implies that $\tau_1$ 
  and $\tau_2\circ f$ are homotopy equivalent.  This defines a canonical $Spin^c$
  structure on  $T M_1 \oplus f^* TM_2$. Hence, there is a canonical    Morita
equivalence 
\[
C(M_1, \CL(M_1))  \sim C(M_1 , f^* \CL(M_2 )). 
\]
This Morita equivalence  defines a canonical isomorphism
\[
K^a_{ev/odd} (M_1, W_3\circ \tau_1) \cong K^a_{ev/odd} (M_1, W_3\circ \tau_2\circ f ).  
\]
Recall that 
natural  push-forward map in analytical K-homology is related to the    K-theoretical 
push-forward map $f_!$  in topological K-theory via the
Poincar\'{e} duality (PD):
\[
\xymatrix{
K^{ev/odd} (M_1) \ar[d]^{\cong}_{PD} \ar[r]^{f_!} &  K^{ev/odd} (M_2) \ar[d]^{\cong}_{PD}  \\
K^a_{ev/odd} (M_1, W_3\circ \tau_1) \ar[r]^{f_*} & K^a_{ev/odd} (M_2, W_3\circ \tau_2) }
\]
where the Poincar\'{e} duality  shifts the degree by the dimension of the underlying manifold. Applying    the natural  push-forward map in analytical K-homology, we obtain
\[
f_*: K^a_{ev/odd} (M_1, W_3\circ \tau_1 ) \cong K^a_{ev/odd} (M_1,  W_3\circ \tau_2\circ f  )  \longrightarrow K^a_{ev/odd} (M_2, W_3\circ \tau_2)
\]
with the degree shifted by $d(f) = dim M_1-dim M_2 ( mod\  2 ).$ 
The homotopy  between $W_3\circ \nu_1$ and  $W_3 \circ \nu_2 \circ f$ 
 defines a canonical homomorphism 
 \[
 f_*: K^a_{ev/odd} (M_1,  W_3\circ \nu_1 )  \cong  K^a_{ev/odd} (M_1,  W_3 \circ \nu_2 \circ f) 
\longrightarrow K^a_{ev/odd} (M_2, W_3\circ \nu_2), 
 \]
 such that the following diagram commutes
 \ba\label{com:1}
 \xymatrix{
 K^a_{ev/odd} (M_1, W_3\circ \tau_1 ) \ar[d]^{f_*}\ \ar[r]^{I^{M_1}_*}  & K^a_{ev/odd} (M_1,  W_3\circ \nu_1 )\ar[d]^{f_*}\\
  K^a_{ev/odd} (M_2, W_3\circ \tau_2 )  \ar[r]^{I^{M_2}_*}  & K^a_{ev/odd} (M_2,  W_3\circ \nu_2 ).
 }
 \na
 Similarly, the homotopy between 
 $
 ( \a \circ \i ) * (\eta_2 \circ (f\times Id) ) * (W_3 \circ \nu)
$ and $\eta_1$  induces a
 commutative diagram
  \ba\label{com:2}
 \xymatrix{
 K^a_{ev/odd} (M_1, W_3\circ \nu_1 ) \ar[d]^{f_*}\ \ar[r]^{(\eta_1)_*}  & K^a_{ev/odd} (M_1,\a\circ \i_1   )\ar[d]^{f_*}\\
  K^a_{ev/odd} (M_2,  W_3\circ \nu_2  )  \ar[r]^{(\eta_2)_*}  & K^a_{ev/odd} (M_2,  \a\circ \i_2).
 }
 \na
 The homotopy   between  $\a  \circ \i_2 \circ f$    and $\a \circ \i_1$   induces the following commutative
  triangle
\ba\label{com:3}
 \xymatrix{
 K^a_{ev/odd} (M_1,\a\circ \i_1   )\ar[dd]^{f_*} \ar[dr] ^{(\i_1)_*}&\\
& K^a_{ev/odd} (X, \a) .  \\
 K^a_{ev/odd} (M_2 , a  \circ \i_2 ) \ar[ur]^{(\i_2)_*}&}
 \na
 These commutative diagrams (\ref{com:1}),  (\ref{com:2})  and  (\ref{com:3}) imply that
  \[\begin{array}{lll}
  && \ind_a (M_2, \i_2, \nu_2, \eta_2) 
 \\[2mm]
 &=& (\i_2)_* \circ (\eta_2)_*\circ I^{M_2}_* ([M_2]) \\[2mm]
 &=&  (\i_2)_* \circ (\eta_2)_*\circ I^{M_2}_*\circ f_* ([M_1])\\[2mm]
 &=& (\i_1)_* \circ (\eta_1)_*\circ I^{M_1}_* ([M_1]) \\[2mm]
 &= &  \ind_a(M_1, \i_1, \nu_1, \eta_1).
 \end{array}
 \]
 Now the cobordant invariance in  Proposition \ref{a-index:pro} tells us that $\ind_a$ is a  well-defined homomorphism 
 from 
 $ \Omega^{Spin^c}_{ev/odd} (X, \a)$ to $ K^a_{ev/odd} (X, \a)$.
  \end{proof}

We recall the construction of Thom spectrum of $\bf Spin^c$ bordism.  Let  $\xi_k$  be  the universal bundle over $\BSO(k)$. The pull-back 
 bundle over  $\BSpin^c(k)$ is  given by
 \[
\tilde{\xi}_k =  ESpin^c(k) \times _{Spin^c(k)}  \RR^k. 
 \]
 Denote by $\MSpin (k)$ the Thom space of  $ \tilde{\xi}_k$. 
 The inclusion map $j_k$ induces a pull-back diagram
 \[
 \xymatrix{ j_k^* \tilde{\xi}_{k+1} \ar[r]  \ar[d] & \tilde\xi_{k+1} \ar[d]  \\
  \BSpin^c(k) \ar[r]^{j_k} &   \BSpin^c(k+1)}
  \]
  with   $j_k^* \tilde{\xi}_{k+1} \cong 
\tilde{\xi}_k \oplus \underline{\RR}$, where $\underline{\RR}$ denote
 the trivial  real  line bundle. Then  the Thom space of $ j_k^* \tilde{\xi}_{k+1} $ 
 can be identified with $\Sigma \MSpin (k)$ (the suspension of $\MSpin (k)$). Thus
 we have  a sequence of  continuous maps 
 \[
 \Th (j_k): \Sigma \MSpin (k) \longrightarrow \MSpin (k+1),
 \]
 that means, $\{ \MSpin (k) \}_k$ is the Thom spectrum associated to $\BSpin^c =\disp{\vlim_k}\BSpin^c(k) $.

 As  $ \BSpin^c(k) $   is a  principal 
   $K(\ZZ, 2)$-bundle  over $ \BSO(k) $, we have a base point preserving
   action of $K(\ZZ, 2)$ on the Thom spectrum  $\{ \MSpin (k) \}$,  written as 
   \[
   K(\ZZ, 2)_+ \wedge   \MSpin (k)  = \dfrac{K(\ZZ, 2)   \times   \MSpin (k) }{ K(\ZZ, 2) \times *}\longrightarrow   \MSpin (k)
   \]
   which is compatible with the base-point action of $K(\ZZ, 2)$ on the K-theory spectrum $\KK$
   in the sense that       
    there exists a $K(\ZZ, 2)$-equivariant map, called 
    the index  map
    \[
    Index: \MSpin   \longrightarrow  \KK. 
\]
  This  $K(\ZZ, 2)$-equivariant map has been  constructed in \cite{Dou} and \cite{Wald}. Here we 
  provide   a  more geometric  construction. Write the principal $BU(1)$-bundle $\BSpin^c(2k)$ as the following
  pull-back  bundle
  \[
  \xymatrix{
\BSpin^c (2k)   \ar[r]   \ar[d]   &  EK(\ZZ, 2)\ar[d] \\
 \BSO(2k)  \ar[r]^{W_3} & K(\ZZ, 3),}
\]
which induces a natural $PU(\cH)$-action
\[
PU(\cH) \times \BSpin^c(2k)  \longrightarrow \BSpin^c (2k).
\]
This   action corresponds to the action of  the set of complex line bundles on the
set of $Spin^c$ structures.  The $PU(\cH)$-action 
on $\BSpin^c (2k)$ can be lifted
to a base point preserving action of $PU(\cH)$ on $\MSpin (2k)$
\[
PU(\cH) \times \MSpin (2k)  \longrightarrow \MSpin  (2k).
\]  Note that there is
a  fundamental  $\ZZ_2$-graded spinor bundle   $S^+ \oplus S^-$ over  $\BSpin^c (2k)$
see Theorem C.9  in \cite{BM}, which defines a canonical Thom class in
$K^0(\MSpin (2k))$. 
This canonical Thom class  determines  a $PU(\cH)$-equivariant map 
 \[
Index:   \MSpin (2k)  \longrightarrow  \Fred (\cH).
\] 
   Hence we have  associated 
   bundles of Thom spectra  over $X$
      \[
   \cP_\a ( \MSpin (k)) = \cP_\a\times_{K(\ZZ, 2)} \MSpin (k),
   \]
   and  natural maps  to the associated bundle of K-theory spectra
   \[
 Index:   \cP_\a ( \MSpin (k))  \longrightarrow   \cP_\a ( \KK ) = \cP_\a\times_{K(\ZZ, 2)}  \KK.
 \]
 
    \begin{remark}  The
 $Spin^c$  bordism groups over  a pointed space $X$,  denoted by $\Omega_*^{Spin^c}(X)$ as  in  
 (\cite{Stong}),   can be identified as the stable homotopy groups of $\MSpin  \wedge X$ (the 
  Pontrjagin-Thom  isomorphism)
   \ba\label{Pont-Thom}
 \Omega_n^{Spin^c}(X) \cong \pi^S_n(\MSpin \wedge X) := \disp{\vlim_{k} } \pi_{n+k} (\MSpin(k) \wedge X).
 \na
 The index map $Index: \MSpin \to \KK$  determines a natural
 transformation from the even or odd dimensional $Spin^c$  bordism group of $X$ to K-homology of $X$, which is called  the topological index:
 \[
 \Omega_{ev/odd}^{Spin^c}(X) \to K^t_{ev/odd} (X).
 \]
 \end{remark}
 The following theorem   is the twisted version
 of the Pontrjagin-Thom isomorphism (\ref{Pont-Thom}).
 
 \begin{theorem}\label{P-Thom}  The bordism group $\Omega^{Spin^c}_n (X, \a)$ of  n-dimensional $\a$-twisted $Spin^c$  manifolds  over $X$ is isomorphic to the stable homotopy group
 \[\xymatrix{
\Theta:  \Omega^{Spin^c}_n (X, \a) \ar[rr]^{\cong\qquad } && \pi_n^{\bf S}\bigl(\cP_\a ( \MSpin  ) /X\bigr).}
\]
Here we denote $\pi_n^{\bf S}\bigl(\cP_\a ( \MSpin ) /X\bigr) : 
= \disp{\lim_{k\to\infty}} \pi_{n+k} \bigl( \cP_\a ( \MSpin (k)) /X\bigr)$.
\end{theorem}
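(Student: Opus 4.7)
The strategy is to adapt the classical Pontrjagin-Thom construction to the twisted setting, using the fact that an $\a$-twisted $Spin^c$ structure provides precisely the data required to lift the classifying map of the stable normal bundle to the total space of the fibre bundle $\cP_\a(\BSpin^c(k))\to X$. I first construct $\Theta$ on cycles. Given a closed $\a$-twisted $Spin^c$ manifold $(M^n,\nu,\i,\eta)$, choose an embedding $j\colon M\hookrightarrow\RR^{n+k}$ for $k$ sufficiently large, with normal bundle $N_j\to M$ classified by a finite-dimensional representative $\nu_k\colon M\to\BSO(k)$ of $\nu$. The homotopy $\eta$ from $W_3\circ\nu_k$ to $\a\circ\i$ is exactly the data of a lift
\[
\tilde\nu_k\colon M\longrightarrow \cP_\a(\BSpin^c(k))
\]
covering $\i$. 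The tautological $k$-plane bundle on $\cP_\a(\BSpin^c(k))$ pulls back via $\tilde\nu_k$ to $N_j$, so passing to fibrewise Thom spaces gives a pointed bundle map $\Th(N_j)\to\cP_\a(\MSpin(k))$ covering $\i$. Composing this with the Pontrjagin-Thom collapse $S^{n+k}\to\Th(N_j)$ and then with the quotient by the basepoint section $X\subset\cP_\a(\MSpin(k))$ yields a pointed map $S^{n+k}\to\cP_\a(\MSpin(k))/X$, which I declare to be $\Theta([M,\nu,\i,\eta])\in\pi_{n+k}(\cP_\a(\MSpin(k))/X)$.

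Well-definedness requires four checks. (a) For $k$ large, any two embeddings of $M$ into $\RR^{n+k}$ are regularly isotopic, yielding a homotopy of PT maps. (b) Replacing $j$ by its composition with $\RR^{n+k}\hookrightarrow\RR^{n+k+1}$ gives $N_{j'}\cong N_j\oplus\underline{\RR}$, and the resulting class is the suspension of the previous one via the structure map $\Sigma\MSpin(k)\to\MSpin(k+1)$ of the Thom spectrum, so $\Theta$ is compatible with the colimit $k\to\infty$. (c) An equivalence of $\a$-twisted $Spin^c$ structures is by definition a homotopy of $\eta$, which produces a homotopy of the lifts $\tilde\nu_k$ and hence of the PT maps. (d) If $(W,\nu_W,\i_W,\eta_W)$ is an $\a$-twisted $Spin^c$ null-bordism of $(M,\nu,\i,\eta)$, a neat embedding $W\hookrightarrow\RR^{n+k}\times[0,\infty)$ extending $j$ produces a pointed map $D^{n+k+1}\to\cP_\a(\MSpin(k))/X$ whose restriction to the boundary $S^{n+k}$ is the PT map for $M$, so $\Theta([M,\nu,\i,\eta])=0$.

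To build the inverse, represent $[f]\in\pi_{n+k}(\cP_\a(\MSpin(k))/X)$ by a pointed map $f\colon S^{n+k}\to\cP_\a(\MSpin(k))/X$. The zero section $\cP_\a(\BSpin^c(k))\hookrightarrow\cP_\a(\MSpin(k))$ survives in the quotient as a codimension-$k$ subspace disjoint from the collapsed basepoint section. After replacing $PU(\cH)$ by a finite-dimensional truncation and $\BSpin^c(k)$ by a finite Grassmannian model carrying the restricted principal $K(\ZZ,2)$-structure, the target becomes a finite-dimensional smooth space and standard smoothing plus transversality makes $f$ transverse to the zero section. The preimage $M:=f^{-1}(\cP_\a(\BSpin^c(k)))$ is then a closed smooth $n$-manifold smoothly embedded in $S^{n+k}\setminus\{\text{pt}\}\cong\RR^{n+k}$; its normal bundle is the pullback of the tautological $k$-plane bundle under $f|_M$. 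The map $f|_M\colon M\to\cP_\a(\BSpin^c(k))$ determines $\i$ as the composition with the projection to $X$, and the twist data $\eta$ is read off directly from the defining pullback square of $\cP_\a$. Independence of choices and the inversion identities $\Theta\circ\Theta^{-1}=\mathrm{id}$, $\Theta^{-1}\circ\Theta=\mathrm{id}$ then follow from the classical Pontrjagin-Thom argument applied fibrewise (tubular neighborhoods reverse the collapse on one side; transverse preimages reverse the embedding on the other).

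The principal obstacle is the transversality step: a priori $\cP_\a(\BSpin^c(k))$ is only a CW complex and not a smooth manifold, because $K(\ZZ,2)$-models such as $PU(\cH)$ are infinite-dimensional. I plan to circumvent this by the standard approximation device: replace $PU(\cH)$ by a sequence of smooth finite-dimensional projective unitary groups $PU(N_i)$ (equivalently, finite skeleta of $\CC\PP^\infty$) and $\BSpin^c(k)$ by finite Grassmannian models, perform all smoothing and transversality there, and verify that the PT construction and its inverse are natural under the colimits $N_i\to\infty$ and $k\to\infty$. Once these approximations are checked to be coherent, the remainder of the proof is a parameterised version of the classical Pontrjagin-Thom argument and carries over essentially verbatim.
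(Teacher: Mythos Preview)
Your proposal is correct and follows essentially the same route as the paper's proof: both adapt the classical Pontrjagin--Thom construction by using the homotopy $\eta$ to produce a lift $\tilde\nu_k\colon M\to\cP_\a(\BSpin^c(k))$, pass to Thom spaces, and recover manifolds from homotopy classes via transversality to the zero section after replacing the target by a finite-dimensional model. The only organizational difference is that the paper shows $\Theta$ is a monomorphism and an epimorphism separately rather than constructing an explicit inverse, and it treats the finite-dimensional approximation more casually (simply asserting one may ``pretend that $\BSpin^c(k)$ is finite dimensional'') whereas you spell out the approximation by $PU(N_i)$ and finite Grassmannians more carefully.
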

\begin{proof}    The proof is modeled on the proof of the classical Pontrjagin-Thom isomorphism 
(Cf. \cite{Stong})

 \nin{\bf Step 1.} Definition of the homomorphism $\Theta$.
 
 \vspace{2mm}
 
Let $\sigma$ be an element in $\Omega^{Spin^c}_n (X, \a) $ represented by a $n$-dimensional 
$\a$-twisted $Spin^c$  manifold $(M, \i, \nu, \eta)$   over $X$. Let $i_k: M \to \RR^{n+k}$
be an embedding with  the classifying map  of  the normal bundle denoted by $\nu_k$. Then
we have the following pull-back diagram
\ba\label{normal}
\xymatrix{
N  \ar[r]^{\tilde \nu_k}\ar[d]_{\pi}  & \xi_k\ar[d] \\
 M \ar[r]^{\nu_k} & \BSO(k).}
\na
Here the total space $N$ of the   normal bundle of $i_k$ can be thought
of as a subspace of $\RR^{n+k} \times \RR^{n+k}$.  Under the addition 
map $\RR^{n+k} \times \RR^{n+k}\to \RR^{n+k}$, 
for some sufficiently small $\eps>0$, the $\eps$-neighborhood $N_\eps$ 
of the zero section $M\times \{0\}$ of $N$ is an embedding  $\eps|_{N_\eps}: N_\eps \to \RR^{n+k} $,
whose restriction to  the zero section $M\times \{0\}$ is the embedding $i_k: M \to \RR^{n+k}$.

Consider $S^{n+k}$ as $\RR^{n+k}\cup\{\infty\}$ (the one point compactification) so we have an
embedding $N_\eps \to S^{n+k}$.  Define
$$
c: S^{n+r}  \to N_\eps/\pa N_\eps $$
 by collapsing all points  of $S^{n+k}$ outside and on the boundary
of $N_\eps$ to a point.  Note that $N_\eps/\pa N_\eps$ is homeomorphic to the Thom
space 
$\Th (N)$ of the normal bundle of $i_k$, induced by multiplication by $\eps^{-1}$.  
 Denote  this homeomorphism by 
\[
\eps^{-1}: N_\eps/\pa N_\eps \longrightarrow  \Th (N).
\]

  The pull-back diagram 
\[
\xymatrix{\cP_{W_3\circ \nu_k}     \ar[d]   \ar[r]  &
EK(\ZZ, 2) 
  \ar[d]  \\
M \ar[r]_{W_3\circ \nu_k}    & K(\ZZ, 3), } 
\]
 induces a homotopy pull-back  
\[
\xymatrix{\cP_{W_3\circ \nu_k}   (\BSpin^c (k))    \ar[d]  \ar[r]  &
EK(\ZZ, 2) (\BSpin^c(k)  )
  \ar[d]  \\
M \ar[r]_{W_3\circ \nu_k}    & K(\ZZ, 3). } 
\]
As $EK(\ZZ, 2)$ is contractible, so $EK(\ZZ, 2) (\BSpin^c(k)  )$ is homotopy equivalent to
$\BSO(k) $.  This implies  that the following diagram
\[
\xymatrix{\cP_{W_3\circ \nu_k}   (\BSpin^c (k) )   \ar[d]  \ar[r]  &
\BSO (k) 
  \ar[d]  \\
M \ar[r]_{W_3\circ \nu_k}    & K(\ZZ, 3)  } 
\]
is a homotopy pull-back.  Notice that the diagram
\[
\xymatrix{M  \ar[d]^{Id}    \ar[r]^{\nu_k} &
BSO (k) 
  \ar[d]^{W_3}   \\
M \ar[r]_{W_3\circ \nu_k}   & K(\ZZ, 3) } 
\]
is commutative.  Thus there exists a unique map (up to
homotopy) $h: M \to \cP_{W_3\circ \nu_k}   (\BSpin^c (k) ) $  such
that the following diagram
\[
 \xymatrix{
    M  \ar@/{}_{1pc}/[ddr]_{Id}  \ar@/{}^{1pc}/[drr]^{\nu_k}
       \ar[dr]^{h }            \\
      &   \cP_{W_3\circ \nu_k}   (\BSpin^c (k) )   \ar[d]  \ar[r]    
                     & \BSO (k) \ar@2{-->}[dl]  \ar[d]^{W_3}       \\
      & M \ar[r]_{W_3\circ \nu_k}     & K(\ZZ, 3).            }
      \]
is  homotopy commutative.
Together with  the pull-back diagram 
\[
\xymatrix{ 
\tilde{\xi}_k \ar[r] \ar[d] & \xi_k \ar[d]\\
\BSpin^c (k)   \ar[r]   & \BSO (k), } 
\]
we obtain a homotopy commutative diagram 
\[
\xymatrix{N  \ar[d]     \ar[r]  &
  \cP_{W_3\circ \nu_k}   (\tilde{\xi}_k )
  \ar[d]    \\
M \ar[r]    &    \cP_{W_3\circ \nu_k}   (\BSpin^c (k) )  } 
\]
which in turn determines 
a  canonical map
\[
h_*:  \Th (N) \longrightarrow \cP_{W_3\circ \nu_k}  (\MSpin (k))/M.
\]
Notice that the $\a$-twisted $Spin^c$-structure on $M$ defines   a continuous map 
\[
  \i_* : \cP_{W_3\circ \nu_k}   (\MSpin (k))/M  \longrightarrow  \cP_\a (  \MSpin (k)  )/X .
\]
The  composition  $ \i_* \circ h_* \circ \eps^{-1}\circ c $ is a continuous map of pairs
\[
\theta= \theta_{(M, \i, \nu, \eta)}:  (S^{n+k}, \infty) \longrightarrow (\cP_\a (  \MSpin (k)  )/X, *),
\]
here   $*$ is the base point   in $\cP_\a (  \MSpin (k) )/X$, hence represents an element 
of  $$\disp{\lim_{k\to\infty}} \pi_{n+k} \bigl( \cP_\a ( \MSpin (k)) /X\bigr).$$

The stable  homotopy class of $\theta$
doesn't depend on   choices  of  $i_k,  \nu_k, \eps$  in the construction for sufficiently large $k$.  Thus, we assign an element  in 
$$\disp{\lim_{k\to\infty}} \pi_{n+k} \bigl( \cP_\a ( \MSpin (k)) /X\bigr)$$ represented by $\theta$ to every
closed $\a$-twisted $Spin^c$  manifold $(M, \i, \nu, \eta)$   over $X$.  

Now we  show  that
the stable  homotopy class of $\theta$ depends only on the bordism class of $M$. Let
$W$ be an (n+1)-dimensional   $\a$-twisted $Spin^c$   manifold and 
$j:  \pa W \to \RR^{n+k}$ be  an embedding for some sufficiently large $k$ with the classifying map
$\nu_k$ for the  normal  bundle
\[
\xymatrix{
N_{\pa W}   \ar[r]^{\tilde \nu_k}\ar[d]_{\pi}  & \xi_k\ar[d] \\
\pa W \ar[r]^{\nu_k} & \BSO(k).}
\]
 Choose $i: W \to \RR^{n+k} \times [0, 1]$ to  be an embedding  agreeing with $j\times \{1\}$ on $\pa W$,  embedding
a tubular neighborhood of $\pa W$ orthogonally  along $j(\pa W) \times \{1\}$, and with  the image missing
$ \RR^{n+k}  \times \{0\}$.  The previous
construction applied to  the embedding $i$ yields a null-homotopy of the map
$$\theta: (S^{n+k}, \infty) \longrightarrow  (  \cP_\a   (\MSpin (k)  )/X, *).$$

Assigning the
stable homotopy class of the map $\theta$ to each $\a$-twisted $Spin^c$  bordism class, we have defined a map
\[
\Theta:  \Omega^{Spin^c}_n (X, \a) \longrightarrow \pi^{\bf S}_{n} \bigl( \cP_\a ( \MSpin) /X\bigr).
\]

 \vspace{4mm}
 
\nin{\bf Step 2.} $\Theta$ is a  homomorphism.
\vspace{2mm}

Given a pair of closed $\a$-twisted $Spin^c$  manifolds $(M_1, \i_1,  \nu_1, \eta_1)$
and  $(M_2, \i_2,  \nu_2, \eta_2)$  over $X$ representing two classes in $ \Omega^{Spin^c}_n (X, \a)$. 
Then for $a=1$ or $2$,  $\Theta([M_a, \i_a, \nu_a, \eta_a])$ is represented by a map
\[
\theta_a: (S^{n+k}, \infty)  \longrightarrow  (\cP_\a   (\MSpin (k)  )/X, *)
\]
  constructed as above.

Choose an  embedding  $i: M_1\sqcup M_2  \to \RR^{n+k}$ such that  the last coordinate is positive for $M_1$ and negative for $M_2$.
Taking small enough $\epsilon$, the previous construction  gives us a map 
\[
\xymatrix{
(S^{n+k}, \infty)  \ar[r]^d & S^{n+k} \wedge  S^{n+k} \ar[r]^{\qquad \theta_1 + \theta_2\qquad \qquad  } &(  \cP_\a  (\MSpin (k)  )/X, *),
}
\]
where $d$ denotes the collapsing the equator of $S^{n+k}$. This map represents
the sum of the homotopy classes of $\theta_1$ and $\theta_2$.
Hence, 
\[
 \Theta ([M_1, \i_1,  \nu_1, \eta_1]   +  [M_2, \i_2,  \nu_2, \eta_2] ) = 
  \Theta ([M_1, \i_1,  \nu_1, \eta_1] ) + 
  \Theta ( [M_2, \i_2,  \nu_2, \eta_2] ).
  \]
  
  \nin{\bf Step 3.} $\Theta$ is a monomorphism.
  \vspace{2mm}

  Let  $(M , \i ,  \nu, \eta)$ be an  $\a$-twisted $Spin^c$  $n$-manifold such that $\Theta ([M , \i ,  \nu, \eta])=0$. Then
  for some large $k$, the above  construction in Step 1  defines a continuous map 
  \[
  \theta_0 = \i_* \circ h_* \circ \eps^{-1}\circ c: ( S^{n+k}, \infty) \longrightarrow \cP_\a(\MSpin (k))/X
  \]
  which is null-homotopic.  As    $\cP_\a(\BSpin^c(k)) \subset  \cP_\a(\MSpin (k))/X$ and   the fact that $M$ is the zero section of $N$, and the map 
  \[
  \i_*\circ h_*: \Th (N) \to  \cP_\a(\MSpin (k))/X
  \]
  sending the zero section of $N$  to $\cP_\a(\BSpin^c(k))$. We have 
    \[
  M= \theta_0^{ -1} \bigl(\cP_\a(\BSpin^c(k))\bigr).
  \]
  Note that  the trivial map, denoted by
  $\theta_1$,  maps  $ S^{n+k}$   to the base point of $\cP_\a(\MSpin (k))/X$,
  so we know that
  $\theta_1^{-1}  \bigl(\cP_\a(\BSpin^c(k))\bigr)$ is a   empty set.   Now we can choose a homotopy 
  between  $\theta_0$ and $\theta_1$ 
  \[
  H: S^{n+k} \times [0, 1] \longrightarrow  \cP_\a(\MSpin (k))/X
  \]
  for some sufficiently large $k$, such that $H$ is differentiable near and transversal to
 $$\cP_\a(\BSpin^c(k)) \subset  \cP_\a(\MSpin (k))/X.$$
  So 
 $$W= H^{-1}\bigl (\cP_\a(\BSpin^c(k))\bigr)$$
  is a submanifold of $\RR^{n+k}\times [0, 1]$
  with $\pa W= M$ meeting $\RR^{n+k}\times \{0\}$ orthogonally along $M$. The map 
 $H|_W$ sends $W$ to $\cP_\a(\BSpin^c(k))$, as  $\cP_\a(\BSpin^c(k))$ is a fibration over $X$, so
 we have a continuous map $\i_W: W \to X$. 

Note that the pull-back diagram 
\[
\xymatrix{\cP_\a  \ar[d]   \ar[r]  &
EK(\ZZ, 2) 
  \ar[d]  \\
X \ar[r]_\a  & K(\ZZ, 3), } 
\]
 induces a homotopy pull-back  
\[
\xymatrix{\cP_\a (\BSpin^c(k) )    \ar[d]  \ar[r]  &
EK(\ZZ, 2) (\BSpin^c k) )
  \ar[d]  \\
X \ar[r]_\a  & K(\ZZ, 3). } 
\]
As $EK(\ZZ, 2)$ is contractible, so the associated fiber bundle
 $EK(\ZZ, 2) (\BSpin^c(k) )$ is homotopy equivalent to
$\BSO(k) $. This implies  that the following diagram
\[
\xymatrix{\cP_\a (\BSpin^c(k))   \ar[d]  \ar[r]  &
\BSO(k)
  \ar[d]  \\
X \ar[r]_\a  & K(\ZZ, 3)  } 
\]
is a homotopy pull-back. 
We see that the map $H|_W$ defines a homotopy commutative
diagram
\[
 \xymatrix{
    W   \ar@/{}_{1pc}/[ddr]  \ar@/{}^{1pc}/[drr] 
       \ar[dr]^{H|_W }            \\
      &   \cP_\a (\BSpin^c (k) )   \ar[d]  \ar[r]    
                     & \BSO  (k) \ar@2{-->}[dl]  \ar[d]^{W_3}       \\
      & X \ar[r]_\a   & K(\ZZ, 3).            }
      \]
  Hence,  $W$ admits  an  $\a$-twisted $Spin^c$ structure such that the boundary inclusion
  $    M   \to W$ is a morphism in the $\a$-twisted $Spin^c$  bordism category. 
  This implies that $(M , \i ,  \nu, \eta)$
  is null-bordant, so $[M , \i ,  \nu, \eta]=0$ in  $\Omega^{Spin^c}_n (X, \a)$.
       
 \vspace{4mm}
 
\nin{\bf Step 4.}  $\Theta$ is an epimorphism.
\vspace{2mm}

Let $\theta : ( S^{n+k}, \infty) \longrightarrow (\cP_\a(\MSpin (k))/X, *)$, for a large $k$,  represents an element
in $$\pi_n^{\bf S}\bigl(\cP_\a ( \MSpin  ) /X\bigr).$$
As $S^{n+k}$ is compact, we may find a finite
dimensional model for $\BSpin^c(k)$, so we may pretend that $\BSpin^c(k)$
is finite dimensional.  We can deform the map $\theta$ to a map
$h$ such that
\begin{enumerate}
\item $h$ agrees with $\theta$ on an open set containing  $\infty$.
\item $h$ is differentiable on the preimage of some open set   containing $\cP_\a (\BSpin^c(k))$ and is transverse on 
$\cP_\a (\BSpin^c(k))$.  
\item  Let $M= h^{-1} (\cP_\a (\BSpin^c(k)))$, then $h$ is  a normal bundle map from a tubular
neighborhood of $M$ in $S^{n+k}$ to $\cP_\a(\MSpin (k))/X$. 
\end{enumerate}
Then $M$ is a smooth  compact n-dimensional manifold with the following homotopy commutative diagram:
\[
 \xymatrix{
    M   \ar@/{}_{1pc}/[ddr]_\i  \ar@/{}^{1pc}/[drr]^\nu 
       \ar[dr]^{h|_M }            \\
      &   \cP_\a (\BSpin^c (k) )   \ar[d]  \ar[r]    
                     & \BSO (k) \ar@2{-->}[dl]  \ar[d]^{W_3}       \\
      & X \ar[r]_\a   & K(\ZZ, 3).            }
      \]
      Therefore, $M$ admits an  $\a$-twisted $Spin^c$ structure $(\i, \nu, \eta)$. The above generalized
      Pontrjagin-Thom construction implies that
      $\Theta ([M, \i, \nu, \eta] )$ is the class represented by  $\theta$.

\end{proof}

 The index map $Ind: \MSpin  \to \KK$ (the complex K-theory spectrum)
  induces a  map of  bundles of   spectra over $X$
 \[
Ind:   \cP_\a ( \MSpin ) \longrightarrow   \cP_\a ( \KK).
 \]
 The  stable homotopy group of  $ \cP_\a ( \KK)/X$ by definition is the twisted
 topological K-homology groups $K^t_{ev/odd} (X, \a)$, due to the periodicity of $\KK$, we have
 \[
 K^t_{ev} (X, \a) =  \disp{\vlim_{k\to\infty}} \pi_{2k} \bigl( \cP_\a ( \KK) /X\bigr)
 \]
 and 
  \[
 K^t_{odd} (X, \a) =  \disp{\vlim_{k\to\infty}} \pi_{2k+1} \bigl( \cP_\a ( \KK ) /X\bigr).
 \]
 Here the direct limits are taken by the double suspension
 \[
 \pi_{n+2k} \bigl( \cP_\a ( \KK ) /X\bigr) \longrightarrow  \pi_{n+2k+2} \bigl( \cP_\a (S^2 \wedge   \KK ) /X \bigr)
 \]
 and then followed by the standard map 
 \[
 \xymatrix{
 \pi_{n+2k+2} \bigl(    \cP_\a (S^2 \wedge \KK)/X \bigr)  \ar[r]^{b\wedge  1} & \pi_{n+2k+2} \bigl(  \cP_\a (\KK \wedge \KK)/X \bigr)
  \ar[r]^m &  \pi_{n+2k+2} \bigl(  \cP_\a ( \KK)/X\bigr) }, 
 \]
 where $b: \RR^2\to \KK$ represents  the Bott generator  in $K^0(\RR^2)$, $m$ is the base point preserving
 map inducing the ring structure on K-theory.

\begin{definition} \label{index:topological}  (Topological index)  There is a homomorphism, called
the topological index 
\ba\label{t-index}
\ind_t:  \Omega^{Spin^c}_* (X, \a)  \longrightarrow K^t_{ev/odd} (X, \a), 
\na
  defined to be $Ind_*\circ \Theta$,
 the composition of $\Theta$ (as in Theorem \ref{P-Thom}) 
 \[\xymatrix{
\Theta:  \Omega^{Spin^c}_n (X, \a) \ar[rr]^{\cong\qquad } && \pi_n^{\bf S}\bigl(\cP_\a ( \MSpin  ) /X\bigr).}
\]
and the induced index transformation
\[
Ind_*:   \disp{\vlim_{k\to\infty}} \pi_{n+2k} \bigl( \cP_\a ( \MSpin (2k)) /X\bigr) \to 
\disp{\vlim_{k\to\infty}} \pi_{n+2k} \bigl( \cP_\a ( \KK) /X\bigr)
\]
\end{definition}

\section{Topological index = Analytical index} \label{section:5}
 
 In this section,we will establish  the main result of this paper. It should be thought of  as the generalized  Atiyah-Singer index theorem for   $\a$-twisted $Spin^c$ manifolds over $X$ with a twisting $\a: X\to K(\ZZ, 3)$. In this section, we assume that $X$ is a {\bf smooth}  manifold.

\begin{theorem}\label{Main} There is a natural  isomorphism $\Phi: K^t_{ev/odd} (X, \a) \longrightarrow
K^a_{ev/odd} (X, \a)$ such that the following diagram
commutes
\[\xymatrix{ 
& \Omega^{Spin^c}_{ev/odd} (X, \a)  \ar[dl]_{\ind_t} \ar[dr]^{\ind_a}
&\\
K^t_{ev/odd} (X, \a) \ar[rr]^{\cong}_{\Phi} & & K^a_{ev/odd} (X, \a),}
\]
that is, given an closed $\a$-twisted $Spin^c$  manifold $(M, \nu, \i, \eta)$ over $X$,  we have 
\[
\ind_a ( M, \nu, \i, \eta)  = \ind_t (M, \nu, \i, \eta)
\]
under the isomorphism $\Phi$. 
\end{theorem}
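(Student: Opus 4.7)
The plan is to construct $\Phi$ from twisted Poincar\'e duality on the smooth manifold $X$, and then verify commutativity of the triangle by recognizing both $\ind_t$ and $\ind_a$ as expressing the same K-theoretic push-forward $\i_!([\underline{\CC}])$ along the twisted K-oriented map $(M,\nu,\i,\eta)\to X$.

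Let $\tau_X:X\to\BSO$ classify the stable tangent bundle of $X$. On the analytical side, Kasparov's Poincar\'e duality, combined with the Morita equivalence $C(X,\CL(TX))\sim C(X,\tau_X^*\BSpin^c(\cK))$ (the argument used in Section~\ref{section:3} to produce the fundamental class, cf.\ (\ref{1})--(\ref{2})), yields a degree-$\dim X$ isomorphism
\[
\mathrm{PD}^a:\ K^a_{ev/odd}(X,\a)\;\longrightarrow\; K^{ev/odd}(X,\,\a+W_3\circ\tau_X).
\]
On the topological side, the fiberwise Thom isomorphism of property (V), applied to the spectrum bundle $\cP_\a(\KK)\to X$ twisted by $\tau_X$, yields
\[
\mathrm{PD}^t:\ K^t_{ev/odd}(X,\a)\;\longrightarrow\; K^{ev/odd}(X,\,\a+W_3\circ\tau_X),
\]
with the same degree shift. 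Define $\Phi:=(\mathrm{PD}^a)^{-1}\circ \mathrm{PD}^t$; naturality follows from functoriality of both dualities.

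To check commutativity, let $(M,\nu,\i,\eta)$ be a closed $\a$-twisted $Spin^c$-manifold over $X$. The twisted $Spin^c$ structure $\eta$ makes $\i$ into a K-oriented map in the twisted sense, so property (VI) provides a push-forward $\i_!:K^0(M)\to K^{ev/odd}(X,\,\a+W_3\circ\tau_X)$. By construction of the fundamental class as $[M]=PD([\underline{\CC}])$ in Section~\ref{section:3}, one checks that $\mathrm{PD}^a\bigl(\ind_a(M,\nu,\i,\eta)\bigr) = \mathrm{PD}^a\bigl(\i_*\circ\eta_*\circ I_*([M])\bigr) = \i_!([\underline{\CC}])$, because each of the maps $I_*$, $\eta_*$ and $\i_*$ encodes one factor of the K-theoretic push-forward under Poincar\'e duality. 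For the topological index, choose an embedding $i_k:M\to\RR^{n+k}$ with normal bundle $N$; by Theorem~\ref{P-Thom} the class $\Theta([M,\nu,\i,\eta])$ is represented by
\[
S^{n+k}\longrightarrow\Th(N)\longrightarrow\cP_{W_3\circ\nu_k}(\MSpin(k))/M\longrightarrow\cP_\a(\MSpin(k))/X,
\]
and postcomposing with the fiberwise index map $\MSpin\to\KK$ realizes, under $\mathrm{PD}^t$, the push-forward along $\i$ of the Thom class of $N$, which is again $\i_!([\underline{\CC}])$. Thus both indices coincide under $\Phi$.

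The main obstacle is the last identification: one must verify that the $PU(\cH)$-equivariant map $Index:\MSpin(2k)\to\Fred(\cH)$ determined by the canonical $\ZZ_2$-graded spinor bundle over $\BSpin^c(2k)$ induces, fiberwise on $\cP_\a$, exactly the Thom class that underlies the analytical Thom isomorphism coming from the $Spin^c$ Dirac operator. This is the twisted counterpart of the Thom-class compatibility step in the untwisted Atiyah--Singer theorem. The cleanest route is either an equivariant reduction over the principal $K(\ZZ,2)$-bundle $\cP_\a$ to the classical untwisted statement, or an argument using the multiplicative property (IV) combined with the $K(\ZZ,2)$-equivariance of the canonical Thom class established at the end of Section~\ref{section:4} to transport the known untwisted compatibility into the twisted setting.
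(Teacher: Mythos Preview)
Your proposal is correct and follows essentially the same strategy as the paper: construct $\Phi$ by composing twisted Poincar\'e duality isomorphisms $K^t_{ev/odd}(X,\a)\cong K^{ev/odd}(X,\a+W_3\circ\tau_X)\cong K^a_{ev/odd}(X,\a)$ (the paper calls these $\Psi_t$ and $\Psi_a$), then verify commutativity by identifying both indices with the same push-forward of the unit class. The only organizational difference is that the paper handles your ``main obstacle'' by first using naturality of $\i_!$ on all three groups to reduce to the universal case $(M,Id,\nu,Id)$ over $M$ itself, where the Thom-class compatibility becomes the tautology that the same Pontrjagin--Thom diagram defines both $\ind_t(M,Id,\nu,Id)$ and the image of $[\underline{\CC}]$ under the Thom isomorphism $K^0(M)\cong K^0(N_M,W_3\circ\nu_k\circ\pi)$; this sidesteps the need to argue equivariantly over $\cP_\a$.
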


 \begin{remark}
 If $\a: X\to K(\ZZ, 3)$ is  the trivial map, then we have following commutative diagram
\ba\label{AS:index}
\xymatrix{ 
& \Omega^{Spin^c}_{ev/odd} (X)  \ar[dl]_{\ind_t} \ar[dr]^{\ind_a}
&\\
K^t_{ev/odd} (X) \ar[rr]^{\cong} & & K^a_{ev/odd} (X)}
\na
where the isomorphism $K^t_{ev/odd} (X) \cong K^a_{ev/odd} (X)$ follows from the work
of Atiyah \cite{Ati2}, Baum-Douglas \cite{BD1} and  Kasparov \cite{Kas}. If $X$ is a point, then
the diagram (\ref{AS:index})  is the usual form of Atiyah-Singer index theorem for $Spin^c$ manifolds. 
 \end{remark}

  \begin{proof} Notice that $K^t_{ev/odd} (X, \a)$ and $K^a_{ev/odd} (X, \a)$ are  two 
   generalized homology theories  dual to the twisted K-theory. 
  The twisted K-cohomology
$K^{ev/odd}(X, \a)$ is defined as  
\[
K^{ev}(X, \a) =  \disp{\vlim_{k\to \infty}}  \pi_0(\Gamma(X, \cP_\a(\Omega^{2k} \KK))  \]
\[
K^{odd}(X, \a) =  \disp{\vlim_{k\to \infty}}  \pi_0(\Gamma(X, \cP_\a(\Omega^{2k+1} \KK))
\]
the homotopy classes of sections (with compact support if $X$ is non-compact) of the associated bundle of  K-theory spectra, and 
$\Omega^k \KK$ is the iterated loop space of $\KK$.   We will show that there are natural isomorphisms
from  twisted K-homology (topological and analytical) to twisted K-cohomology with
the twisting shifted by 
\[
\a \mapsto \a   + ( W_3 \circ \tau)
\]
where $ \tau: X \to BSO $ is 
  the classifying map of  the stable tangent space and   $\a   + ( W_3 \circ \tau )$ denotes  the map
$X \to  K(\ZZ, 3)$  defined in (\ref{product}), representing the class $[\a]+ W_3(X)$ in $H^3(X, \ZZ)$.

 \vspace{4mm}

\nin{\bf Step 1.}  There exists an isomorphism   $K^t_{ev/odd} (X, \a) \cong    K^{ev/odd} (X, \a  + (  W_3 \circ \tau) ) $   with the degree shifted by $dim X (mod\ 2)$. 
 \vspace{2mm}

Assume $X$ is $n$ dimensional, choose an embedding $i_{2k}: X\to \RR^{n+2k}$ for some large
$k$,  with its normal bundle $\pi: N_{2k}  \to X$ identified as an $\eps$-tubular neighborhood of $X$. 
Any two embeddings   $X\to \RR^{n+2k}$ are  homotopic through a regular homotopy  for a sufficiently large $k$.  Under the inclusion $\RR^{n+2k} \times {0} \subset \RR^{n+2k+2}$, the Thom spaces 
of $N_{2k}$ and $N_{2k+2}$ are related through the reduced suspension by $S^2$
\ba
\label{Thom:X}
\Th (N_{2k+2})  = S^2 \wedge  \Th (N_{2k}).
\na 

By the  Thom isomorphism (\cite{CW2}), we have an isomorphism
\ba\label{Thom:isom}
 K^{ev/odd} \bigl(X, \a + (   W_3 \circ \tau)\bigr)  \cong   \disp{\vlim_{k\to \infty}} K^{ev/odd} (N_{2k}, \a\circ \pi ),
 \na
 where  $\a\circ \pi: N_{2k} \to K(\ZZ, 3)$ is the pull-back twisting on $N_{2k} $.  There is a natural map
 from $K^{ev/odd} (N_{2k}, \a\circ \pi )$ to $K^t_{ev/odd}(X, \a)$ by considering $S^{n+2k}$
 as $\RR^{n+2k} \cup \{\infty\}$ and the following pull-back diagram
 \[
 \xymatrix{
 \cP_{\a \circ \pi} (\KK) \ar[r] \ar[d] & \cP_\a(\KK) \ar[d]  \\
 N_{2k}\ar[r]^{\pi}& X.
  }
 \]
  Given an element of $K^{ev} (N_{2k}, \a\circ \pi )$ represented by a compactly supported section $\theta:  
  N_{2k} \to  \cP_{\a\circ \pi}(\KK)$, then  
  \[
  \xymatrix{
  S^{n+2k} \ar[r]^c & \Th(N_{2k}) \ar[r]^{\theta}&   \cP_{\a\circ \pi}(\KK)/N_{2k} \ar[r] &  \cP_\a(\KK)/X}
 \]
 representing an element in $K^t_{ev/odd}(X, \a)$.  Replacing $X$ by $X\times \RR$,   this construction
 gives a map from $K^{odd} (N_{2k}, \a\circ \pi )$ to $K^t_{odd}(X, \a)$.
  Recall that there is  a   homotopy equivalence
 $\KK\sim \Omega^2 \KK$ induced by the  map
 \[
 \xymatrix{
  S^2  \wedge  \KK \ar[r]^{b\wedge  1} &\KK \wedge  \KK \ar[r]^m & \KK}, 
 \]
 where $b$ represents  the Bott generator  in $K^0(\RR^2)$, $m$ is the base point preserving
 map inducing the ring structure on K-theory.  Together with (\ref{Thom:X}), we obtain
 \ba\label{diagram:limit}
 \xymatrix{
  S^{n+2k} \ar[r]^c\ar[d]^s & \Th(N_{2k}) \ar[r]\ar[d]^s &  \cP_\a(\KK)/X\ar[d]^s\\ 
 S^2 \wedge  S^{n+2k} \ar[d] \ar[r]^{1\wedge c \qquad } & S^2 \wedge   \Th(N_{2k})  \ar[d] 
 \ar[r] & \cP_\a( S^2 \wedge \KK)/X \ar[d]^{m\circ (b \wedge  1)} \\
 S^{n+2k+2}  \ar[r]^{c} &   \Th(N_{2k+2})  
 \ar[r] & \cP_\a( \KK)/X
 } 
 \na
 where $s$ is the reduced suspension map by $S^2$.   This implies that 
 the  stable homotopy equivalent class  of  sections
 defines  the same element in $K^t_{*}(X, \a)$, with the degree given by  $n (mod\ 2)$.  
 Thus,  we have a well-defined homomorphism
 \ba\label{Psi}
  \Psi_t:  K^{ev/odd} (X, \a  + (   W_3 \circ \tau) )   \longrightarrow   K^t_{ev/odd}(X, \a).
  \na
  
 Conversely, for a sufficiently large $k$,  let $\theta: (S^{m+2k}, \infty)  \to (\cP_\a (\KK)/X, *)$   represent an element in $K^t_{ev/odd} (X, \a)$ (depending on even or odd $m$). We can lift this map to  a map 
 $\theta_0:  S^{m+2k} \to \cP_\a ( \MSpin (2k)) /X$. As in Step 4 of  the proof of Theorem \ref{P-Thom},   $\theta_0$ can be deformed  to a differentiable map $h$ on the preimage of 
 some open set containing  $\cP_\a ( \BSpin^c (2k)) $,  is transverse to  $\cP_\a ( \BSpin^c (2k))$
 and agrees  with $\theta_0$ on an open set containing $\infty$. Then 
 \[
 M= h^{-1}  (\cP_\a ( \BSpin^c (2k))) \subset \RR^{m+2k} =  S^{m+2k} -\{\infty\}
 \]
 is a smooth compact manifold and 
 admits a natural $\a$-twisted $Spin^c$   structure 
 \ba  \label{recon:M}
 \xymatrix{
    M   \ar@/{}_{1pc}/[ddr]_\i  \ar@/{}^{1pc}/[drr]^\nu
       \ar[dr]^{h|_M }            \\
      &   \cP_\a (\BSpin^c (2k) )   \ar[d]  \ar[r]    
                     & \BSO \ar@2{-->}[dl]  \ar[d]^{W_3}       \\
      & X \ar[r]_\a   & K(\ZZ, 3).            }
      \na
  Therefore, we can assume that the
   map $\theta: (S^{m+2k}, \infty)  \to (\cP_\a (\KK)/X, *)$ comes from   the following 
   commutative diagram
   \[
   \xymatrix{& \RR^{m+2k} \ar[r]^\theta & \cP_\a (\KK)  \ar[dd]& \\
   N_\eps    \ar[ur]^j  \ar[dr]^\pi  && \\
   &M\ar[uu]  \ar[r]^\i & X,} 
   \]
   where $N_\eps$ is the normal bundle of $M$ in $\RR^{n+2k}$, identified as   the $\eps$-neighborhood  of $M$ in $ S^{m+k}$.  
   In particular, the continuous  map 
   $$\theta\circ j: N_\eps \to S^{m+2k} \to  \cP_\a (\KK)/X$$
   determines a compactly supported section of $\cP_{\a \circ \i\circ \pi}(\KK)=( \i\circ \pi)^*\cP_\a (\KK) $.

   Choose an embedding $\i_{2k_0}: M\to \RR^{2k_0}$, 
   the  $\a$-twisted $Spin^c$ structure (\ref{recon:M})  on $M$ over $X$ induces a natural 
   $\a\circ \pi $-twisted $Spin^c$ structure (\ref{recon:M})  on $M$ over $X \times \RR^{2k_0}$
   \[
   \xymatrix{
   M   \ar[d]_{(\i, \i_{2k_0})}   \ar[r]^{\nu}    
                     & \BSO \ar@2{-->}[dl]  \ar[d]^{W_3}       \\
      X  \times \RR^{2k_0} \ar[r]_{\a\circ \pi_0}   & K(\ZZ, 3)           }
      \]
 such that $(\i, \i_{2k_0})$ is an embedding.  Here $\pi_0$ is the projection $X\times \RR^{2k_0}\to X$.
   Notice that 
   \[
   K^t_{ev/odd} (X, \a)  \cong K^t_{ev/odd} (X \times \RR^{2k_0}, \a).
   \]
  Therefore,    
      without losing any generality, we may assume  that $\i: M\to X$ is an embedding and there is
   an embedding $i_{2k}: X\to \RR^{n+2k}$.  Denote by  $N_X$ the normal bundle of  the embedding $i_{2k}$,
   and $N_M$   the normal bundle of $M$ in $ \RR^{n+2k}$.  We implicitly 
   assume that any  normal bundle of an embedding is  identified to a tubular neighborhood
   of the embedding. 
  Then  we have  the following collapsing map
  \[
  \Th(N_X) \longrightarrow \Th(N_M)
  \]
  as $N_M$ is imbedded in $N_X$ with appropriate choices of tubular neighborhood.  
  
  The map $\theta: 
  (S^{m+2k}, \infty)  \to (\cP_\a (\KK)/X, *)$  is stable homotopic to
  \ba\label{theta:X}\xymatrix{
  (S^{n+2k}, \infty)   \ar[r]^{c} &( \Th(N_M), *) \ar[r] & ( \cP_\a (\KK)/X, *)}.
  \na
 Hence,  we obtain a map
  \[\xymatrix{ 
  \Th (N_X)  \ar[r]  &  \Th (N_M ) \ar[r] &  \cP_\a (\KK)/X}, 
  \]
  which gives  a  compactly supported section of $\cP_{\a \circ \pi}  (\KK)$ where  $\pi$ denotes the
  projection $N_X\to X$. This section defines an element  in $K^{ev}(N_X, \a \circ \pi)$, hence an element of 
 $K^{ev} (X, \a +( W_3 \circ \tau))$   under the isomorphism (\ref{Thom:isom})  and the diagram 
 (\ref{diagram:limit}).  It is straightforward to show that this map from $ K^t_{ev/odd} (X, \a)$ to 
 $K^{ev/odd} (X, \a +( W_3 \circ \tau))$ is the inverse of $\Psi_t$  defined before.

  Hence, we have established  the isomorphism
\ba\label{Psi:t}
\Psi_t:
 K^{ev/odd} (X, \a + (W_3 \circ \tau)) \longrightarrow   K^t_{ev/odd} (X, \a), 
 \na
  with the degree shifted by $dim X (mod\ 2)$. 
This  is the Poincar\'{e} duality  in topological twisted K-theory.
 
 \vspace{4mm}

\nin{\bf Step 2}.  There is an isomorphism $ \Psi_a: K^a_{ev/odd} (X, \a) \cong    K^{ev/odd} (X, \a  + (  W_3 \circ \tau) )$   with the degree shifted by $dim X (mod\ 2)$. 

\vspace{2mm}

 Recall that for a twisting $\a:X \to K(\ZZ, 3)$,
  there is an associated bundle of  $C^*$ algebras, denoted by
  $\cP_\a(\cK)$ where $\cK $ be the $C^*$-algebra  of  compact 
  operators on an infinite dimensional, complex and separable Hilbert space $\cH$.  
   Here we identify   $K(\ZZ, 2)$ as the projective unitary group $PU(\cH)$ with the 
  norm topology (See \cite{AS1} for details). 
   There is an equivalent definition of $ K^{ev/odd} (X, \a)$ in \cite{Ros},  
using the continuous trace $C^*$ algebra   $C_c(X, \cP_\a(\cK))$,  which consists of compactly supported sections of the
 bundle of  $C^*$-algebras,  $\cP_{\a } (\cK)$. 
  Moreover,
  Atiyah-Segal established a canonical 
  isomorphism in \cite{AS1} between 
  $ K^{ev/odd} (X, \a )$ and the analytical  K-theory of   $ C_c(X, \cP_\a(\cK))$.
   The latter K-theory  can be described as  the Kasparov KK-theory 
  \[
  KK\bigl(\CC,  C_c(X, \cP_\a(\cK))\bigr). 
  \]

There is an equivalent definition of $ K^{ev/odd} (X, \a  +( W_3 \circ \tau) )$ in \cite{Ros},  
using the continuous trace
$C^*$ algebras,  which consists of compactly supported sections of the
 bundle of  $C^*$-algebra,  $\cP_{\a  +( W_3 \circ \tau)} (\cK)$ 
 
In \cite{EEK} (see also \cite{Tu})  , a natural isomorphism, called the Poincar\'{e} duality  in analytical  twisted K-theory,
\[
 KK\bigl(\CC,  C_c(X, \cP_{\a} (\cK))\hat{\otimes}_{C_c(X) }C_c(X, \CL(TX))\bigr)   \cong   
 KK\bigl( C_c(X, \cP_{-\a} (\cK)), \CC \bigr) 
\]
  is constructed using the  Kasparov product with  the weak dual-Dirac element  associated to 
  $\cP_{\a} (\cK)$, see Definition 1.11 and Theorem 1.13 in \cite{EEK} for details. 
  
  Note that there is a natural Morita equivalence  
    \[
C_c(X, \cP_{\a} (\cK))\hat{\otimes}_{C_c(X) }C_c(X, \CL(TX))
  \sim C_c(X,  \cP_{\a + ( W_3 \circ \tau)} (\cK)) 
  \]
  which induces a canonical isomorphism on their KK-groups. 
 The isomorphism 
 \[
 KK\bigl( C_c(X, \cP_{-\a} (\cK)) , \CC \bigr)  \cong 
  KK\bigl( C_c(X, \cP_{\a} (\cK)), \CC \bigr)
  \]
    is obvious using the  operator conjugation. 
  So  in  our notation, the Poincar\'{e} duality in  analytical twisted K-theory can be written  in the following form 
  \ba\label{Psi:a}
  \Psi_a:
  K^{ev/odd} (X, \a + ( W_3 \circ \tau) )  \longrightarrow    K^a_{ev/odd} (X, \a)  ,
 \na
   with the degree shifted by $dim X (mod\ 2)$ coming from the shift of grading on
   the evev/odd dimensional complex Clifford algebra.
  
 Applying the Poincar\'{e} duality isomorphisms (\ref{Psi:t}) and (\ref{Psi:a})
 in   topological  twisted K-theory and algebraic twisted K-theory, we have a natural isomorphism
  \[\xymatrix{
  \Phi: & 
K^t_{ev/odd} (X, \a) \ar[rr]^{\Psi_a \circ  \Psi_t^{-1} } && K^a_{ev/odd} (X, \a),
}\]
such that the following diagram commutes
\ba\label{twisted:index-0}
\xymatrix{ 
&  K^{ev/odd} (X, \a + ( W_3 \circ \tau   ) )  \ar[dl]_{\Psi_t}^{\cong} \ar[dr]^{\Psi_a}_{\cong }
&\\
K^t_{ev/odd} (X, \a) \ar[rr]^{\cong}_{\Phi} & & K^a_{ev/odd} (X, \a).}
\na
 
 \vspace{4mm}
 
\nin{\bf Step 3.}  Show that $\ind_a = \Phi \circ \ind_t$.

Applying the suspension operation, we only  need to prove the even case.  Let $(M, \i, \nu, \eta)$ be a $2n$ dimensional  closed $\a$-twisted $Spin^c$   manifold  over $X$, 
\[
\xymatrix{M \ar[d]_{\i} \ar[r]^{\nu} &
\BSO
 \ar@2{-->}[dl]_{\eta} \ar[d]^{W_3} \\
X \ar[r]_\a  & K(\ZZ, 3), } 
\]
representing an
 element in the  $\a$-twisted $Spin^c$    bordism group $ \Omega^{Spin^c}_{n}  (X, \a)$. 
 
 The analytical index of $(M, \i, \nu, \eta)$, as defined Definition \ref{index:analytical},   
 is given by 
 \ba\label{i_!}
 \ind_a (M, \i, \nu, \eta) = \i_! ([M]) = \i_! \circ PD ([\underline{\CC}]).
 \na
  where $PD: K^0(M) \to   K^a_0( M, W_3\circ \tau)$ is the Poincar\'{e} duality isomorphism
  with $\tau$ the classifying map for the stable tangent bundle.
  The push-forward map $\i_!$  in (\ref{i_!}) is obtained from  the following sequence  of maps
 \[
 \xymatrix{
 K^a_0(M, W_3\circ \tau ) \ar[r]^{I_*} & K^a_0(M, W_3\circ \nu)  \ar[r]^{\eta_*}  &
 K^a_0(M, \a\circ \i )   \ar[r]^{\i_*}& K^a_0(X,  \a) .
 }
  \]
There is a natural push-forward
 map 
 \[
 \i_!:   \Omega^{Spin^c}_{ev} (M , \a\circ \i )  \to   \Omega^{Spin^c}_{ev} (X , \a)
 \]
 such that the following diagrams  for the analytical index 
 \[
 \xymatrix{
  \Omega^{Spin^c}_{ev} (M , \a\circ \i )\ar[d]_{\ind_a}
   \ar[r]^{\i_!}  & \Omega^{Spin^c}_{ev} (X , \a) \ar[d]^{\ind_a}\\
   K^a_0(M, \a\circ \i ) \ar[r]^{\i_!} & K^a_0(X, \a),}
   \]
   and for the topological index
   \[
 \xymatrix{
  \Omega^{Spin^c}_{ev} (M , \a\circ \i )\ar[d]_{\ind_t}
   \ar[r]^{\i_!}  & \Omega^{Spin^c}_{ev} (X , \a) \ar[d]^{\ind_t}\\
   K^t_0(M, \a\circ \i ) \ar[r]^{\i_!} & K^t_0(X, \a),}
   \]
   are commutative.   
   
   As $(M, Id, \nu, Id)$ is a  natural $\a\circ \i $-twisted $Spin^c$ manifold over $M$, we only need
   to show that
   \ba\label{id:1}
 (I_*)^{-1}  \circ \Phi \circ  \ind_t(M, Id, \nu, Id)  = [M] = PD (\underline{\CC}) 
  \na
  in $K^a_0(M, W_3 \circ \tau)$.  We will show that the identity (\ref{id:1})
   follows from the Thom isomorphism 
   \[
K^0(M) \cong K^0 (N_M, W_3\circ \nu_k \circ \pi)
\]
where  we choose an embedding $i_k: M\to \RR^{2n+k}$ with its normal bundle 
  $N_{M}$,   $\pi$ is  the projection $N_M\to M$ and 
  $\nu_k: M  \to BSO(k)$ is  the classifying map of the normal bundle $N_M$.  The image of
  $[\underline{\CC}] $ under the above Thom isomorphism
  is represented by the map 
  \[
  \theta_M:   (S^{2n+k}, \infty)  \to ( \Th (N_M), *) \to  (\cP_{W_3\circ \nu_k \circ \pi} (\KK) /N_M , *)
  \]
 arising from   the 
  $W_3 \circ \nu_k$-twisted $Spin^c$  structure on $M$ as in the following diagram
  \[
  \xymatrix{
  \Th (N_M) \ar[r] & \cP_{W_3\circ \nu_k} (\MSpin (k)) /M \ar[r] & \cP_{W_3\circ \nu_k} (\KK) /M\\
  M \ar@{_{(}->}[u]  \ar[dr]^{Id}  \ar[r] & \cP_{W_3\circ \nu_k} (\BSpin^{c} (k))\ar[d] \ar@{_{(}->}[u]  \ar[r] & BSO(k)\ar[d]^{W_3} \\
  & M \ar[r]^{W_3 \circ \nu_k} & K(\ZZ, 3).}
  \]
  This same diagram also defines the topological index 
  of $(M, Id, \nu, Id)$  under the index map
  \[
\ind_t:    \Omega^{Spin^c}_{ev} (M , W_3 \circ \nu) \to   K^t_0(M, W_3 \circ \nu).
\]
Hence, we establish  the 
  following commutative diagram
\[\xymatrix{ 
& \Omega^{Spin^c}_{ev/odd} (X, \a)  \ar[dl]_{\ind_t} \ar[dr]^{\ind_a}
&\\
K^t_{ev/odd} (X, \a) \ar[rr]^{\cong}_{\Phi} & & K^a_{ev/odd} (X, \a).}
\]

   \end{proof}

 \begin{remark}   \label{remark:index}   Let  $\pi_X: TX \to X$ be   the projection. Applying  the Thom isomorphism (\cite{CW2}), we  obtain
 the following   isomorphism 
\[
K^{ev/odd} (TX, \a  \circ \pi_X)  \cong K^{ev/odd} ( X, \a + ( W_3 \circ \tau)).  
\]
Hence, the above commutative diagram (\ref{twisted:index-0}) becomes
\ba\label{twisted:index-1}
\xymatrix{ 
&  K^{ev/odd} (TX, \a  \circ \pi_X)  \ar[dl]^{PD}_{\cong} \ar[dr]_{PD}^{\cong }
&\\
K^t_{ev/odd} (X, \a) \ar[rr]^{\cong}_{\Phi} & & K^a_{ev/odd} (X, \a),}
\na
which should  be thought of as a generalized Atiyah-Singer index theorem. 
for $\a$-twisted $Spin^c$  manifolds  over $X$ with a twisting $\a: X\to K(\ZZ, 3)$. 
If $\a: X\to K(\ZZ, 3)$ is a trivial map, the  commutative 
diagram (\ref{twisted:index-1}) becomes
\[
\xymatrix{ 
&  K^{ev/odd} (TX)  \ar[dl]_{\ind_t} \ar[dr]^{\ind_a}
&\\
K^t_{ev/odd} (X) \ar[rr]^{\cong}_{\Phi} & & K^a_{ev/odd} (X),}
\]
which is the basic form for the Atiyah-Singer index theorem. The upper vertex 
represents the symbols of elliptic pseudo-differential operators on $X$.
 Each of these index maps is essentially 
just the Poincar\'{e} duality isomorphism between the K-cohomology of
$T^*X$ and  the two realizations of  the K-homology $K_0(X)$.  See \cite{BD1} for more details.

 In particular, if $\a$ is the twisting associated to the classifying 
map $W_3\circ \tau: X \to K(\ZZ, 3)$ of the stable tangent bundle,  then we have  the following twisted index theorem, given by the following commutative diagram
\[\xymatrix{
& K^{ev/odd} (TX, W_3 \circ \tau \circ  \pi ) \ar[dl]_{\ind_t} \ar[dr]^{\ind_a}
&\\
K^t_{ev/odd} (X,  W_3 \circ \tau) \ar[rr]^{\cong}_{\Phi} &  & K^a_{ev/odd} (X, W_3 \circ \tau).}
   \]
This  is a special case of  Connes-Skandalis longitudinal  index theorem for foliations . We will return to this issue later.
 \end{remark}

\section{Geometric  cycles and   geometric   twisted K-homology} \label{section:6}

\begin{definition}
Let $X$ be a paracompact Hausdorff space , and let  $\a:  X \longrightarrow  K(\ZZ, 3)$ be
a twisting over $X$.  A   geometric cycle for $(X, \a)$  is 
a quintuple $(M, \i, \nu, \eta, [E])$ such that
\begin{enumerate}
\item $M$ is a smooth closed manifold  equipped with an $\a$-twisted $Spin^c$ structure;
\[
\xymatrix{M \ar[d]_{\i} \ar[r]^{\nu} & 
\BSO
 \ar@2{-->}[dl]_{\eta} \ar[d]^{W_3} \\
X \ar[r]_\a  & K(\ZZ, 3), } 
\]
where $\i: M\to X$ is a continuous map, $\nu$ is a classifying map of the stable normal bundle, 
and $\eta$ is a homotopy from $W_3\circ \nu$ and $\a \circ \i$;
\item $[E]$ is a K-class in $K^0(M)$ represented by a $\ZZ_2$-graded vector bundle $E$ over $M$. 
\end{enumerate}
Two geometric cycles $(M_1, \i_1, \nu_1, \eta_1, [E_1])$ and $ (M_2, \i,_2 \nu_2, \eta_2, [E_2])$
are isomorphic 
if there is an isomorphism $f:  (M_1, \i_1, \nu_1, \eta_1) \to  (M_2, \i_2,  \nu_2, \eta_2)$,
as $\a$-twisted $Spin^c$ manifolds over $X$,  such that $f_! ([E_1]) = [E_2]$.
\end{definition}

Let $\Gamma (X, \a)$ be the collection of all geometric cycles for $(X,  \a)$. We now impose an equivalence relation $\sim$ on $\Gamma (X, \a)$, generated by the following three elementary 
relations:
\begin{enumerate}
\item  {\bf Direct sum -  disjoint union}

\nin If  $(M , \i , \nu , \eta , [E_1])$ and $ (M , \i,  \nu , \eta , [E_2])$ 
are two geometric cycles with the same $\a$-twisted $Spin^c$ structure,
then 
\[
(M , \i , \nu , \eta , [E_1]) \cup  ( M , \i , \nu , \eta , [E_2]) \sim (M , \i , \nu , \eta , [E_1]+ [E_2]).
\]
\item  {\bf Bordism}

\nin Given  two geometric cycles $(M_1, \i_1, \nu_1, \eta_1, [E_1])$ and $ (M_2, \i_2, \nu_2, \eta_2, [E_2])$, if
there exists a $\a$-twisted $Spin^c$ manifold  $(W, \i, \nu, \eta)$ and $[E]\in K^0(W)$  such that 
\[
\pa (W, \i, \nu, \eta) =  
-(M_1, \i_1, \nu_1, \eta_1) \cup   (M_2, \i_2,  \nu_2, \eta_2)
\]
and $\pa ([E]) = [E_1] \cup [E_2]$. Here $-(M_1, \i_1, \nu_1, \eta_1)$
denotes  the manifold  $M_1$  with the  opposite $\a$-twisted $Spin^c$ structure.

\item   {\bf $Spin^c$ vector bundle modification}

\nin
 Suppose we are given a geometric cycle
 $(M, \i, \nu, \eta, [E]) $ and a  $Spin^c$ vector bundle $V$  over $M$ with 
  even dimensional fibers.  Denote by $\underline{\RR}$ the trivial rank one real 
  vector bundle. Choose a Riemannian metric on $V\oplus \underline{\RR}$, let
  $$\hat{M}= S(V\oplus \underline{\RR})$$  be the sphere bundle of $V\oplus \underline{\RR}$. Then
   the vertical tangent bundle $T^v(\hat{M})$ of $ \hat{M}$ admits a natural $Spin^c$ structure
   with an associated $\ZZ_2$-graded spinor bundle  $S^+_V\oplus S^-_V$ . Denote by
  $\rho: \hat{M} \to M$   the projection  which is   K-oriented.  Then
  \[
  (M, \i, \nu, \eta, [E]) \sim (\hat{M}, \i\circ \rho , \nu \circ \rho, \eta \circ \rho, [\rho^*E\otimes S^+_V]).
  \]
\end{enumerate}

\begin{definition} \label{twisted:geo} Denote by  $K_*^{geo}(X, \a) = \Gamma (X, \a)/\sim$ the 
geometric twisted K-homology. Addition is given by disjoint union - 
direct sum relation. Note that the equivalence relation $\sim$ preserves the parity
of the dimension of the underlying $\a$-twisted $Spin^c$ manifold. Let 
$K^{geo}_{0}(X, \a) $ (resp. $K^{geo}_1(X, \a)$  ) the subgroup of $ K^{geo}_*(X, \a)$
determined by all geometric cycles with even (resp. odd) dimensional
$\a$-twisted $Spin^c$ manifolds. 
\end{definition}

\begin{remark} \begin{enumerate}
\item  According to Proposition \ref{condition},  $M$ admits an $\a$-twisted $Spin^c$ structure
if and only if 
\[
\i^*([\a]) +  W_3(M)=0.
\]
(If $\i$ is an embedding, this is the  anomaly cancellation
condition obtained by Freed and Witten in \cite{FreWit},  
  $ (M, \i, \nu, \eta, [E])$ is  referred to by physicists as a D-brane   appeared  in Type IIB string theory, 
see \cite{FreWit} \cite{Wit1}\cite{Kap}\cite{BouMat}.) 
\item Different definitions of topological twisted K-homology were  proposed in \cite{MatSin}
using $Spin^c$-manifolds and twisted bundles. It  is not
  clear to the author  if their definition is equivalent to Definition \ref{twisted:geo}. 
\item If $f: X\to Y$ is a continuous map and $\a: X\to K(\ZZ, 3)$ is a twisting, then there is a natural
homomorphism  of abelian groups
\[
f_*:  K^{geo}_{ev/odd}(X, \a) \longrightarrow K^{geo}_{ev/odd}(Y, \a\circ f)
\]
sending $[M, \i, \nu, \eta,  E ]$ to $[M,f \circ  \i ,  \nu, \eta,  E]$.   
\end{enumerate}
\end{remark}
 
Given a geometric cycle $(M, \i, \nu, \eta, [E])$, the analytical index (as in Definition \ref{index:analytical})
determines  an element 
\[\begin{array}{lll}
\mu  (M, \i, \nu, \eta, [E]) &  =  & \ind_a(M, \i, \nu, \eta, [E])  \\[2mm]
&=&  \i_*\circ \eta_* \circ I^* \circ PD ([E])\end{array}
\]
in $K^a_{ev/odd} (X, \a)$.

\begin{theorem} \label{Main:2} The assignment $(M, \i, \nu, \eta, [E]) \to \mu  (M, \i, \nu, \eta, [E])  $, called the assembly map,   
defines a natural  homomorphism 
$$\mu: K^{geo}_{ev/odd}(X, \a) \to K^a_{ev/odd} (X, \a)$$
which is an isomorphism for any {\bf smooth}  manifold $X$ with a twisting $\a: X \to K(\ZZ, 3)$.
\end{theorem}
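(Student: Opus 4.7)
The plan is to establish the theorem in three stages: well-definedness of $\mu$ on equivalence classes, the homomorphism property, and bijectivity when $X$ is a smooth manifold.

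For well-definedness, bordism invariance and the disjoint union/direct sum relation follow immediately from Proposition \ref{a-index:pro}. Invariance under $Spin^c$ vector bundle modification is the substantive check. Given a cycle $(M, \i, \nu, \eta, [E])$ and its modification $(\hat{M}, \i\circ \rho, \nu\circ \rho, \eta\circ \rho, [\rho^{*}E \otimes S^+_V])$ along a $Spin^c$ sphere bundle $\rho: \hat M \to M$, the K-oriented push-forward $\rho_{!}$ and the projection formula give
\[
\rho_{!}(\rho^{*}[E]\otimes [S^+_V]) = [E]\cdot \rho_{!}([S^+_V]) = [E],
\]
the second equality holding because $[S^+_V]-[S^-_V]$ is the Thom class realizing the K-orientation of $\rho$, so that $\rho_{!}([S^+_V])=1\in K^{0}(M)$. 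The topological $\rho_{!}$ is intertwined with its analytic counterpart by Poincar\'e duality, and the twistings on $\hat M$ match those on $M$ through the canonical $Spin^{c}$ structure on the vertical tangent bundle $T^{v}\hat M$. Composing with $\i_{*} \circ \eta_{*} \circ I_{*}$ yields equality of analytical indices for the two cycles. Additivity of $\mu$ under disjoint union is then immediate from Proposition \ref{a-index:pro}, and naturality in $(X, \a)$ is evident from the construction.

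For the isomorphism claim when $X$ is smooth, the plan is to factor $\mu$ through topological twisted K-homology via the commutative triangle
\[
\xymatrix{
& K^{geo}_{ev/odd}(X, \a) \ar[dl]_{\lambda} \ar[dr]^{\mu} & \\
K^{t}_{ev/odd}(X, \a) \ar[rr]^{\Phi}_{\cong} & & K^{a}_{ev/odd}(X, \a),
}
\]
where $\lambda$ extends the topological index of Definition \ref{index:topological} from $\Omega^{Spin^{c}}_{ev/odd}(X, \a)$ to geometric cycles. For $(M, \i, \nu, \eta, [E])$, choose an embedding $M \hookrightarrow \RR^{2n+k}$ with normal bundle $N_{M}$, apply the Pontrjagin--Thom collapse $S^{2n+k} \to \Th(N_{M})$, and compose with the twisted Thom--Bott class determined by $(\nu, \eta, [E])$ landing in $\cP_{\a}(\KK)/X$; this is essentially the construction appearing in Step 1 of the proof of Theorem \ref{Main}. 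Well-definedness of $\lambda$ on the three equivalence relations follows from standard Pontrjagin--Thom functoriality and the homotopy invariance of the stable collapse, and the identity $\mu = \Phi \circ \lambda$ is obtained by revisiting Step 3 of the proof of Theorem \ref{Main}: both sides are computed as push-forwards of $PD([E])$ along the same canonical sequence of maps, once $[E]$ is identified with its twisted Thom representative.

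It then remains to show $\lambda$ is an isomorphism; this is where I expect the main obstacle. Surjectivity of $\lambda$ follows from transversality: given a stable section $\theta: S^{n+k} \to \cP_{\a}(\KK)/X$ representing a class in $K^{t}_{ev/odd}(X, \a)$, lift $\theta$ through a finite-dimensional approximation inside $\cP_{\a}(\MSpin)$ and make it transverse to $\cP_{\a}(\BSpin^{c})$, as in Step 4 of the proof of Theorem \ref{P-Thom}. The preimage of $\cP_{\a}(\BSpin^{c})$ yields an $\a$-twisted $Spin^{c}$ manifold $M$, and the transverse bundle data over $M$ yields a K-theory class $[E]$ whose $\lambda$-image is $[\theta]$. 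Injectivity is the twisted analogue of the Baum--Douglas equivalence theorem: if two cycles have equal $\lambda$-images, a stable homotopy between their Pontrjagin--Thom representatives, made simultaneously transverse on the level of both the $\BSpin^{c}$-preimage and the bundle data, produces an $\a$-twisted $Spin^{c}$ bordism $W$ carrying a K-theory class restricting to $[E_{j}]$ on the two ends, modulo $Spin^{c}$ vector bundle modifications on the boundary. The delicate step is arranging the transversality simultaneously for both structures while respecting the twisting data; this adapts the Baum--Douglas argument from the untwisted case and is where smoothness of $X$ is essential. Given this, $\mu = \Phi \circ \lambda$ is a composition of isomorphisms by Theorem \ref{Main}.
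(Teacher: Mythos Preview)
Your well-definedness argument matches the paper's. For the isomorphism, though, you and the paper run the auxiliary map in \emph{opposite directions}, and this is not merely cosmetic. You build $\lambda: K^{geo}_{ev/odd}(X,\a)\to K^t_{ev/odd}(X,\a)$ and must then prove $\lambda$ bijective; injectivity is, as you concede, the delicate step. Your sketch---make a homotopy in $\cP_\a(\KK)/X$ transverse and read off a bordism carrying a K-class---glosses over the real difficulty: a homotopy of maps into $\cP_\a(\KK)/X$ does not directly produce a twisted $Spin^c$ bordism, since one must first lift back to $\cP_\a(\MSpin)/X$, and controlling what happens to the K-class under such a lift (and matching it to the given $[E_j]$ on the ends modulo vector bundle modification) is essentially the twisted Conner--Floyd relation. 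That step is genuinely not proved in your outline.

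The paper sidesteps this entirely by reversing the arrow. It constructs $\Psi: K^t_{ev/odd}(X,\a)\to K^{geo}_{ev/odd}(X,\a)$ (your transversality construction for the surjectivity of $\lambda$ is essentially the paper's \emph{definition} of $\Psi$, always landing on cycles of the form $(M,\i,\nu,\eta,[\underline{\CC}])$) and checks that $\Phi = \mu \circ \Psi$. Since $\Phi$ is already an isomorphism by Theorem \ref{Main}, only \emph{surjectivity} of $\Psi$ is required: if $\Psi$ is onto and $\Phi$ is bijective, then $\mu$ is automatically injective (any $x$ with $\mu(x)=0$ equals $\Psi(y)$, whence $\Phi(y)=0$, so $y=0$ and $x=0$) and surjective. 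Surjectivity of $\Psi$ is established by multiplying the Thom-class section $\theta$ associated to $(M,\i,\nu,\eta)$ fibrewise by a map $\sigma:M\to\KK$ representing $[E]$, so that $\Psi([\theta\cdot\sigma])=[M,\i,\nu,\eta,[E]]$. Thus the hard half of the Baum--Douglas comparison is inherited for free from $\Phi$ rather than argued directly; this reversal of the triangle is the key simplification you are missing.
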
 
\begin{proof}{\bf Step 1.}  We need to show that the correspondence is compatible with the three elementary
equivalence relations, so the assembly map  $\mu$ is well-defined. We only need to discuss the even case.

Proposition \ref{a-index:pro} ensures that 
$  \ind_a(M, \i, \nu, \eta, [E]) $ 
is compatible with the bordism  relation and disjoint union - direct sum relation. We only need to check
that the assembly map is compatible with the relation of $Spin^c$ vector bundle
modification.

Suppose given a geometric cycle
$(M, \i, \nu, \eta, [E])$ of even dimension and a  $Spin^c$ vector bundle $V$ over $M$ with 
  even dimensional fibers.   Then
  \[
  (M, \i, \nu, \eta, [E]) \sim (\hat{M}, \i\circ \rho , \nu \circ \rho, \eta \circ \rho, [\rho^*E\otimes S^+_V]).
  \]
 where  $ \hat{M}= S(V\oplus \underline{\RR})$  is  the sphere bundle of $V\oplus \underline{\RR}$
 and   $\rho: \hat{M}\to  M$ is the projection.
  The vertical tangent bundle $T^v(\hat{M})$ of $ \hat{M}$ admits a natural $Spin^c$ structure
   with an associated $\ZZ_2$-graded spinor bundle  $S^+_V\oplus S^-_V$ .   The    K-oriented map $\rho$ induces
  a natural homomorphism (see \cite{AH}) 
  \[
  \rho_!: K^{0}(\hat{M}) \longrightarrow K^0(M)
  \]
  sending $[\rho^*E\otimes S^+_V]$ to $[E]$. This follows from the  Atiyah-Singer index theorem
for  families of longitudinally elliptic differential operator associated to the Dirac operator on the
  round $2n$-dimensional sphere.  Applying  the Poincar\'{e} duality, we have the following
  commutative diagram
  \[
  \xymatrix{
  K^{0}(\hat{M}) \ar[r]^{\rho_!}\ar[d]^{PD} &  K^0(M)\ar[d]^{PD}  \\
  K^a_0(\hat{M}, W_3\circ \tau \circ \rho) \ar[r]^{\rho_*}& K^a_0(M, W_3\circ \tau),}
  \]
  which implies that $PD([E]) = \rho_* \circ PD([\rho^*E\otimes S^+_V])$. Hence,  we have 
  \[
  \mu  (M, \i, \nu, \eta, [E]) = \mu  (\hat{M}, \i\circ \rho , \nu \circ \rho, \eta \circ \rho, [\rho^*E\otimes S^+_V]).
  \]
  
  {\bf Step 2.} We  establish   the following commutative diagram 
  \[
  \xymatrix{ 
& K^t_{ev/odd} (X , \a )  \ar[dl]_{\Psi} \ar[dr]^{\Phi}_{\cong}
&\\
K^{geo}_{ev/odd}(X, \a ) \ar[rr]^{\mu} & & K^a_{ev/odd} (X, \a)}
\]
and show that $\Psi$ is surjective. This  implies 
 that $\mu$ is an isomorphism. 
 
 Firstly, we construct a natural map  $\Psi: K^t_{ev} (X , \a ) \to K^{geo}_0(X, \a)$. Given
 an element of  $K^t_0 (X , \a )$ represented by a map
 \[
 \theta: (S^{m+2k}, \infty) \to (\cP_\a (\KK)/X, *)
 \]
 for a sufficiently large $k$.   We can lift this map to  a map 
 $\theta_0:  S^{m+2k} \to \cP_\a ( \MSpin (2k)) /X$. As in Step 4 of  the proof of Theorem \ref{P-Thom},   $\theta_0$ can be deformed  to a differentiable map $h$ on the preimage of 
 some open set containing  $\cP_\a ( \BSpin^c (2k)) $,  is transverse to  $\cP_\a ( \BSpin^c (2k))$
 and agrees  with $\theta_0 $ on an open set containing $\infty$. Then 
 \[
 M= h^{-1}  (\cP_\a ( \BSpin^c (2k))) \subset \RR^{m+2k} =  S^{m+2k} -\{\infty\}
 \]
 admits a natural $\a$-twisted $Spin^c$   structure 
\[
 \xymatrix{
    M   \ar@/{}_{1pc}/[ddr]_\i  \ar@/{}^{1pc}/[drr]^\nu
       \ar[dr]^{h|_M }            \\
      &   \cP_\a (\BSpin^c (2k) )   \ar[d]  \ar[r]    
                     & \BSO \ar@2{-->}[dl]  \ar[d]^{W_3}       \\
      & X \ar[r]_\a   & K(\ZZ, 3).            }
    \]
A homotopy equivalence map gives rise to a bordant $\a$-twisted $Spin^c$  manifold. 
Hence, we have a geometric cycle $(M, \i, \nu, \eta, [\underline{\CC}])$, whose equivalence class doesn't
depend on various choices in the construction. This defines  a map
\[
\Psi:  K^t_0 (X , \a ) \longrightarrow K^{geo}_0(X, \a ).
\]
It is straightforward to show that $\Psi$ is a homomorphism.  Note that $\Phi = \mu \circ \Psi$ follows from the definition of   $\Phi$ and Theorem \ref{Main}. 

To show that $\Psi$ is surjective, let $(M, \i, \nu, \eta, [E])$ be a geometric cycle. Then the 
$\a$-twisted $Spin^c$  manifold $(M, \i, \nu, \eta)$ defines a  bordism class
in $\Omega^{Spin^c}_{ev}(X, \a)$.  The topological index
\[
\ind_t (M, \i, \nu, \eta) \in K^t_0(X, \a)
\]
is represented by the canonical map
\[
\theta: (S^{m+2k}, \infty)  \to ( \Th (N_M), *) \to( \cP_{\a} (MSpin (2k) /X, *) \to (\cP_\a ( \KK ) /X, *)
\]
 associated to the normal bundle $\pi: N_M\to M$ of an embedding 
$i_k: M \to \RR^{m+2k}$ as in  Step 1 of the proof of Theorem \ref{Main}. This map defines a compactly supported 
section of $\cP_{\a\circ \i \circ \pi} (\KK)$ (a bundle of K-theory spectra over $N_M$). We also denote this section by
$\theta$.  Then the homotopy class of the section $\theta$ defines a twisted K-class
in $K^0(N_M, \a\circ \i\circ \pi)$, which is mapped to $[\underline{\CC}]$ under the Thom isomorphism
\[\begin{array}{lll}
K^0(N_M, \a\circ \i\circ \pi) &\cong &  K^0(N_M, W_3\circ \nu\circ \pi)  \\[2mm]
&\cong &   K^0(M).
\end{array}
\]

Let $\sigma: M\to \KK$ be a map representing the K-class $[E]$.  
$\sigma \circ \pi$ is a section of the trivial bundle  $\underline{\KK}$ over $N_M$. Define a new section
of $\cP_{\a\circ \i \circ \pi} (\KK)$ by applying the fiberwise multiplication $m: \KK \wedge \KK\to \KK$
to  $(\theta, \sigma\circ \pi)$. Then $m(\theta, \sigma\circ \pi)$ is a compactly supported section 
of $\cP_{\a\circ \i \circ \pi} (\KK)$ which determines a map, denoted by $\theta\cdot \sigma$
\[
\theta\cdot \sigma: (S^{m+2k}, \infty)  \to ( \Th (N_M), *)  \to (\cP_{\a\circ \i \circ \pi} (\KK)/N_M, *).
\]
The   homotopy class of $\theta\cdot \sigma$ as an element in $K^0(N_M , \a \circ \i\circ \pi ) $,  is uniquely determined by the stable homotopy  class of $\theta$ and  the homotopy class of $\sigma$.   Under the Thom isomorphism
$K^0(N_M, \a\circ \i\circ \pi) \cong  K^0(M)$, $[\theta\cdot \sigma]$ is mapped to $[E]$.  Hence, 
\[
\Psi([\theta\cdot \sigma]) = [M, \i, \nu, \eta, [E]].
\]
Therefore, $\Psi$ is surjective.

 \end{proof}
 
\begin{corollary} \label{D:brane}
 Given a twisting $\a: X\to K(\ZZ, 3)$ on a smooth manifold $X$,  every twisted
K-class in $K^{ev/odd}(X, \a)$ is represented by a geometric cycle
supported on an $(\a + ( W_3 \circ \tau))$-twisted closed $Spin^c$-manifold
$M$ and an ordinary  K-class $[E] \in K^0(M)$.
\end{corollary}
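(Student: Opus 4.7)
The plan is to reduce the statement to Theorem \ref{Main:2} via the Poincar\'{e} duality in topological twisted K-theory established in Step 1 of the proof of Theorem \ref{Main}, namely the canonical isomorphism
\[
\Psi_t: K^{ev/odd}(X, \gamma + (W_3\circ\tau)) \stackrel{\cong}{\longrightarrow} K^t_{ev/odd}(X, \gamma)
\]
for any twisting $\gamma: X \to K(\ZZ, 3)$, with the degree shift by $\dim X \pmod 2$. I would apply it with $\gamma = \a + (W_3\circ\tau)$. Since $W_3 = \b(w_2)$ lies in the image of the Bockstein $\b: H^2(X;\ZZ_2) \to H^3(X;\ZZ)$, the class $W_3\circ\tau$ is 2-torsion in $H^3(X;\ZZ)$, so $2(W_3\circ\tau) = 0$ there. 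Consequently $\gamma + (W_3\circ\tau) = \a + 2(W_3\circ\tau)$ represents the same class as $\a$ in $H^3(X;\ZZ)$ and is therefore homotopic to $\a$ as a map to $K(\ZZ, 3)$, so $\Psi_t$ yields a canonical identification
\[
K^{ev/odd}(X, \a) \;\cong\; K^t_{ev/odd}(X, \a + (W_3\circ\tau)).
\]

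Next I would invoke Theorem \ref{Main:2} applied to the twisting $\a + (W_3\circ\tau)$: the commutative triangle
\[
\xymatrix{
& K^t_{ev/odd}(X, \a + (W_3\circ\tau)) \ar[dl]_{\Psi} \ar[dr]^{\Phi}_{\cong} &\\
K^{geo}_{ev/odd}(X, \a + (W_3\circ\tau)) \ar[rr]^{\mu}_{\cong} & & K^a_{ev/odd}(X, \a + (W_3\circ\tau))
}
\]
exhibits $\Psi = \mu^{-1}\circ\Phi$ as a (surjective) map from topological to geometric twisted K-homology; the surjectivity is also constructed directly at the end of the proof of Theorem \ref{Main:2}. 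Chasing an arbitrary class $x \in K^{ev/odd}(X, \a)$ through $\Psi_t^{-1}$ into $K^t_{ev/odd}(X, \a + (W_3\circ\tau))$ and then through $\Psi$ produces a geometric cycle $(M, \i, \nu, \eta, [E])$ for the pair $(X, \a + (W_3\circ\tau))$, which by Definition \ref{twisted:geo} is precisely a closed $(\a + (W_3\circ\tau))$-twisted $Spin^c$-manifold $M$ together with an ordinary K-class $[E] \in K^0(M)$, as required.

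I do not expect a serious obstacle, since both ingredients --- the Poincar\'{e} isomorphism $\Psi_t$ from Theorem \ref{Main} and the surjectivity of the topological-to-geometric comparison map $\Psi$ established within the proof of Theorem \ref{Main:2} --- are already in hand. The only genuine point to monitor is the bookkeeping that $\a + (W_3\circ\tau)$ (rather than $\a - (W_3\circ\tau)$) is the correct twisting carried by $M$; this is resolved by the 2-torsion of $W_3$, which also ensures that the dimensional parity shift of $\Psi_t$ is consistent with the parity convention built into the definition of $K^{geo}_{ev/odd}$.
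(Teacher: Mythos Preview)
Your proposal is correct and follows essentially the same route as the paper: identify $K^{ev/odd}(X,\a)$ with twisted K-homology of $(X,\a+(W_3\circ\tau))$ via Poincar\'{e} duality (using the $2$-torsion of $W_3$), then invoke Theorem \ref{Main:2} to pass to geometric cycles. The only cosmetic difference is that the paper routes the Poincar\'{e} duality through the analytical side (Thom isomorphism followed by Remark \ref{remark:index}, landing in $K^a_{ev/odd}$ and then applying $\mu^{-1}$), whereas you route it through the topological side via $\Psi_t$ and then apply $\Psi$; by diagram (\ref{twisted:index-0}) these are equivalent.
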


 \begin{proof} We only need to prove the even  case, the odd case can be obtained by the suspension operation.  Assume that  $X$ is even dimensional and $\pi: TX \to X$ is the projection,  then we have the following isomorphisms
 {\small\[
 \begin{array}{lllll}
 K^0(X, \a) &\cong&  K^0 (TX, (\a\circ \pi) + ( W_3 \circ \tau\circ \pi)  ) & \quad & \text{(Thom isomorphism)} \\[2mm]
 & \cong &  K^a_0  (X, \a    + ( W_3 \circ \tau ) ) & \quad & \text{(Remark \ref{remark:index})} \\[2mm]
 & \cong &  K^{geo}_0 (X, \a   + ( W_3 \circ \tau ))  & \quad  &\text{(Theorem \ref{Main:2})}\\[2mm]
 \end{array} 
 \]}
 From the definition of $K^{geo}_0 (X, \a   + ( W_3 \circ \tau )) $, we know that each element in
 $K^{geo}_0 (X, \a   + ( W_3 \circ \tau ))$ is represented by a geometric cycle 
 $(M, \i, \nu, \eta, [E])$  for
 $(X, \a    + (W_3 \circ \tau ) )$, which is a  generalized D-brane
supported on an $(\a  + ( W_3 \circ \tau)) $-twisted closed $Spin^c$-manifold
$M$ and an ordinary  K-class $[E] \in K^0(M)$.
  \end{proof}

\begin{remark}Let $Y$ be a closed subspace of $X$.   A relative   geometric cycle for $(X, Y;  \a)$  is  a 
quintuple   $(M, \i, \nu, \eta, [E])$    such that
\begin{enumerate}
\item $M$ is a smooth  manifold  (possibly with boundary), equipped  with an $\a$-twisted $Spin^c$ structure $(M, \i, \nu, \eta)$;
\item if $M$ has a non-empty boundary, then  $\i (\pa M) \subset  Y$;
\item $[E]$ is a K-class in $K^0(M)$ represented by a $\ZZ_2$-graded vector bundle $E$ over $M$, or
a continuous map  $M \to \KK$. 
\end{enumerate}
The  relation $\sim$ generated by disjoint union - direct sum,  bordism
and $Spin^c$ vector bundle
modification is an equivalence relation.  The collection of relative geometric cycles, modulo
the equivalence relation   is denoted by
\[K^{geo}_{ev/odd}(X, Y; \a ).
\]
Then we have the following  commutative diagram whose arrows are all isomorphisms
\[
  \xymatrix{ 
& K^t_{ev/odd} (X ,Y;  \a )  \ar[dl]_{\Psi} \ar[dr]^{\Phi} 
&\\
K^{geo}_{ev/odd}(X, Y; \a ) \ar[rr]^{\mu} & & K^a_{ev/odd} (X,Y; \a).}
\]

\end{remark}

 \section{The twisted  longitudinal index theorem for foliation}   \label{section:app}

 Given a $C^\infty$  foliated manifold $(X, F)$, that is, $F$ is an integrable sub-bundle of $TX$, let $D$
 be an elliptic differentiable operator along the leaves of the foliation. Denote by $\sigma_D$
 the longitudinal symbol of $D$, whose class in $K^0(F^*)$ is denoted by $[\sigma_D]$. 
 In \cite{ConSka}, Connes and Skandalis defined  the topological index and the analytical
 index of $D$  taking values in the K-theory of the  foliation $C^*$-algebra $C_r^*(X, F)$ and established
 the equality between the topological index and the analytical
 index of $D$. See \cite{ConSka} for more details.   In this section, we will
 generalize the Connes-Skandalis  longitudinal  index theorem  to a 
  foliated manifold $(X, F)$ with a twisting $\a: X\to K(\ZZ, 3)$. 
 
   Let   $N_F = TX/F$
  be the normal bundle to the leaves whose
  classifying map is denoted by $\nu_F: X\to BSO (k)$. Here assume 
  that $F$ is rank $k$ and $X$ is $n$ dimensional.  We can equip $X$ with a Riemannian metric
  such that we have a splitting 
  \[
  TX = F \oplus N_F.
  \]
  Then the sphere bundle $M=S(F^*\oplus \underline{\RR})$ is a
  $W_3 \circ \nu_F$-twisted $Spin^c$ manifold over $X$. Let $\pi$ be  the projection
  $M\to X$.  To see this, we need to calculate the third Stiefel-Whitney class of $M=S(F^*\oplus \underline{\RR})$ from the following exact sequence  of bundles over $M$
  \ba\label{split}
  0\to \pi^* (F\oplus \underline{\RR}) \longrightarrow TM \oplus \underline{\RR}  \longrightarrow
  \pi^*TX \to 0,
  \na
  from which we have 
  \[
  \begin{array}{lll}
  W_3(TM) &=& \pi^*W_3(F) + \pi^* W_3(TX)\\[2mm]
  &=& \pi^* W_3(F) + \pi^* (W_3(F) +W_3(N_F))\\[2mm]
  &=& \pi^*W_3(N_F).
  \end{array}
  \]
  Note that $\pi^* W_3(N_F) = [W_3\circ \nu_F \circ \pi]\in H^3(M, \ZZ)$. 
  So $S(F^*\oplus \underline{\RR})$
  admits a natural $W_3 \circ \nu_F$-twisted $Spin^c$ structure
\[
\xymatrix{M \ar[d]_{\pi } \ar[r]^{\nu} &
\BSO
 \ar@2{-->}[dl]_{\eta} \ar[d]^{W_3} \\
X \ar[r]_{W_3 \circ \nu_F}  & K(\ZZ, 3), } 
\]
where $\nu$ is the classifying map of the stable normal of $M$ and $\eta$ is a homotopy associated
to a splitting of (\ref{split}) as follows. Given a splitting of (\ref{split}), the natural isomorphisms
\[\begin{array}{lll}
TM \oplus \underline{\RR}  &  \cong &  \pi^* TX \oplus  \pi^*( F \oplus \underline{\RR}) \\[2mm]
&\cong & \pi^* ( F \oplus N_F) \oplus  \pi^*( F \oplus \underline{\RR}) \\[2mm]
&\cong & \pi^* ( F \oplus  F) \oplus \pi^* N_F \oplus \underline{\RR}
\end{array}
\]
and the canonical $Spin^c$ structure on $ \pi^* ( F \oplus  F)$ define the homotopy 
between $W_3\circ \tau$ and $W_3 \circ \nu_F$. Different choices  of   splittings   of (\ref{split})
gives rise to the same  homotopy equivalence class, hence doesn't change the twisted $Spin^c$ bordism class of $M$.

Given an elliptic differentiable operator along the leaves of the foliation with 
longitudinal symbol class $[\sigma_D]\in K^0(F^*)$  represented by
a map
\[
\sigma_D: \pi^*E_1 \to \pi^*E_2
\]
of a pair of vector bundles  $E_1$ and $E_2$ over $X$ such that $\sigma_D$ is an isomorphism
away from the zero section of $F^*$. Applying the clutching  construction as described in \cite{BD1},
$M=  S(F^*\oplus \underline{\RR})$ consists of two copies of the unit ball bundle
of $F^*$ glued together by the identity map of $S(F^*)$. We form a vector bundle
over $M$ by gluing $\pi^*E_1$ and $\pi^*E_2$  over  each copy of the unit ball bundle 
along $S(F^*)$ by the symbol map $\sigma_D$. Denote the resulting vector bundle
by $\hat{E}$.  The quintuple $(M, \pi, \nu, \eta, [\hat{E}])$ is a geometric cycle of $(X, W_3 \circ \nu_F)$.

We define the topological index of $[\sigma(D)]$ to be
\[
\ind_t ([\sigma_D]) = [M, \pi, \nu, \eta, \hat{E}] \in K^{geo}_{[n+k]}(X, W_3 \circ \nu_F)
\]
where $[n+k]$ denotes the mod 2 sum    (even or odd  if $n+k$ is even  or  odd). 

The analytical index of $[\sigma_D]$  is defined through 
the following sequence of isomorphisms   
 \[\begin{array}{lllll}
K^0(F^*) &\cong & K^{[k]} (X, W_3(F) )&\quad &  (\text{Thom isomorphism})\\[2mm]
&\cong & K^a_{[n+k]} (X, W_3(F\oplus TX) )&\quad &  (\text{Poincar\'{e} duality})\\[2mm]
&\cong &  K^a_{[n+k]} (X, W_3(N_F) )&\quad &  (F\oplus TX \cong F\oplus F \oplus N_F) \\[2mm]
&\cong & K^a_{[n+k]} (X, W_3\circ \nu_F).   &&\end{array}
\]
The resulting element is denoted by $$\ind_a ([\sigma_D]) \in K^a_{[n+k]} (X, W_3\circ \nu_F).$$

  Now we apply Theorems   \ref{Main} and    \ref{Main:2},   and Remark \ref{remark:index} to obtain the following version of the 
 longitudinal index theorem  for the foliated manifold $(X, F)$.  This  longitudinal  index theorem  is equivalent to 
 the Connes-Skandalis   longitudinal index theorem  through the  natural homomorphism 
 $$ K^a_{[n-k]} (X, W_3 \circ \nu_F)\to   K_0(C_r^*(X, F)).$$ 
 
 \begin{theorem} Given a $C^\infty$   n-dimensional foliated manifold $(X, F)$ 
 of rank $k$,    the longitudinal index theorem for  $(X, F)$ is given by the following
 commutative diagram 
\[
   \xymatrix{ 
& K^0 (F^*)  \ar[dl]_{\ind_t} \ar[dr]^{\ind_a} 
&\\
K^{geo}_{[n-k]}(X, W_3 \circ \nu_F) \ar[rr]^{\mu}_{\cong} & & K^a_{[n-k]} (X, W_3 \circ \nu_F ),}
\]
 whose arrows are all   isomorphisms. 
\end{theorem}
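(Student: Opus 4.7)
The plan is to derive this theorem directly from Theorems \ref{Main} and \ref{Main:2}, applied to the closed $(W_3 \circ \nu_F)$-twisted $Spin^c$ manifold $(M,\pi,\nu,\eta)$ with $M = S(F^*\oplus\underline{\RR})$ and the twisted $Spin^c$ structure constructed just above the theorem statement. Since $X$ is a smooth manifold equipped with the twisting $W_3 \circ \nu_F$, Theorem \ref{Main:2} immediately gives that the bottom horizontal arrow $\mu$ is an isomorphism.

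Second, the definition of $\ind_a([\sigma_D])$ is manifestly a composition of three canonical isomorphisms: the twisted Thom isomorphism (Property (V) in Section \ref{review:K}) applied to the bundle $F^* \to X$, the twisted Poincar\'{e} duality $\Psi_a$ established in Step 2 of the proof of Theorem \ref{Main}, and the canonical identification of twistings $W_3(F \oplus TX)$ with $W_3(N_F)$ arising from the splitting $TX = F \oplus N_F$ together with the canonical $Spin^c$ structure on $F \oplus F$. Hence $\ind_a$ is an isomorphism, and once the commutativity of the triangle is established, the topological index $\ind_t$ will be an isomorphism as well by the two-out-of-three principle.

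Third, and this is the heart of the matter, I would verify that $\mu \circ \ind_t = \ind_a$. By construction $\ind_t([\sigma_D]) = [M,\pi,\nu,\eta,\hat{E}]$, so by Definition \ref{index:analytical} together with the definition of the assembly map,
\[
(\mu \circ \ind_t)([\sigma_D]) = \pi_* \circ \eta_* \circ I_* \circ PD([\hat{E}]) \in K^a_{[n-k]}(X, W_3 \circ \nu_F).
\]
The comparison with $\ind_a([\sigma_D])$ proceeds in two stages. First, the clutching construction producing $\hat{E}$ from $\sigma_D$ is the sphere-bundle realization of the K-theoretic Thom isomorphism for $F^*\to X$: the decomposition of $M$ into two copies of the disk bundle $D(F^*)$ glued along $S(F^*)$ exhibits $[\hat{E}] \in K^0(M)$ as the image of $[\sigma_D] \in K^0(F^*)$ under the Thom isomorphism followed by pullback along $\pi$. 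Second, the push-forward $\pi_* \circ \eta_* \circ I_* \circ PD$ from $K^0(M)$ to $K^a_{[n-k]}(X, W_3 \circ \nu_F)$ should be identified with the composition of twisted Poincar\'{e} duality on $X$ with the canonical isomorphism $K^a_{[n+k]}(X, W_3(F \oplus TX)) \cong K^a_{[n+k]}(X, W_3 \circ \nu_F)$; the homotopy implementing this latter identification is precisely the $\eta$ we built from the splitting $TM \oplus \underline{\RR} \cong \pi^*(F\oplus F) \oplus \pi^*N_F \oplus \underline{\RR}$ and the canonical $Spin^c$ structure on $F\oplus F$.

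The main obstacle will be the careful bookkeeping of twistings and parity shifts in the $\ZZ_2$-grading, and in particular the verification that the $(W_3 \circ \nu_F)$-twisted $Spin^c$ structure $(\pi,\nu,\eta)$ on $M$ used by $\mu$ is the same, up to a canonical homotopy, as the twisted $Spin^c$ structure implicit in the chain of isomorphisms defining $\ind_a$. Once this compatibility is made explicit, commutativity of the triangle becomes a direct consequence of Theorem \ref{Main} applied to the single geometric cycle $(M,\pi,\nu,\eta,[\hat{E}])$, completing the proof.
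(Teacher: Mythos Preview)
Your proposal is correct and follows essentially the same approach as the paper, which simply states that the theorem is obtained by applying Theorems \ref{Main} and \ref{Main:2} together with Remark \ref{remark:index}; you have spelled out in detail how this application actually proceeds, including the identification of $\ind_a$ as a composite of Thom isomorphism, Poincar\'{e} duality, and the splitting-induced identification of twistings, and the verification that $\mu\circ\ind_t=\ind_a$ reduces to the assembly of the geometric cycle $(M,\pi,\nu,\eta,[\hat{E}])$. The paper provides no further argument beyond the one-line invocation of these results, so your elaboration is exactly the content the reader would need to reconstruct.
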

 
 \begin{remark}\label{index:families} 
 If $(X, F)$ comes from a fibration $\pi_B: X \to B$ such that the leaves are the fibers of $\pi_B$, then
 $F $ is given by  the vertical tangent bundle  $T(X/B)$ and $N_F \cong \pi_B^* TB$. This isomorphism
 defines  a canonical
 homotopy $\eta_0$  realizing $W_3 \circ \nu_F \sim W_3\circ \tau_B\circ \pi_B$, 
 where  $\tau_B$ is the classifying map of the stable tangent bundle of $B$.  
  The following homotopy diagram
 \[
  \xymatrix{S(F^*\oplus \underline{\RR})  \ar[d]_{\pi } \ar[rr]^{\nu} & &
\BSO
 \ar@2{-->}[dll]_{\eta} \ar[d]^{W_3} \\
  X\ar[d]_{\pi} \ar[rr]^{W_3 \circ \nu_F} &\ar@2{-->}[dl]_{\eta_0} & K(\ZZ, 3),    \\
  B \ar[rru]_{W_3\circ \tau_B} && } 
\]
   implies
 that   $(S(F^*\oplus \underline{\RR}), \pi_B\circ \pi, \nu, \eta *\eta_0, [\hat{E}])$,
 where $\eta *\eta_0$ is the obvious homotopy joining $\eta$ and $\eta_0$,
   is a geometric cycle of 
 $(B, W_3 \circ \tau_B)$ and 
  \[
( \pi_B)_! (S(F^*\oplus \underline{\RR}), \pi, \nu, \eta, [\hat{E}]) = (S(F^*\oplus \underline{\RR}), \pi_B\circ \pi, \nu, \eta *\eta_0,[\hat{E}]).
\]
 The  commutative diagram
 \[
   \xymatrix{ 
& K^0 (F^*)  \ar[dl]_{\ind_t} \ar[dr]^{\ind_a} 
&\\
K^{geo}_{[n-k]}(X, W_3 \circ \nu_F) \ar[d]^{( \pi_B)_! } \ar[rr]^{\mu} & & K^a_{[n-k]} (X, W_3 \circ \nu_F )
\ar[d]^{( \pi_B)_! } \\
K^{geo}_{[n-k]}(B, W_3\circ \tau_B ) \ar[rr]^{\mu} \ar[dr]_{PD}  & & K^a_{[n-k]} (B, W_3 \circ \tau_B )\ar[dl]^{PD} \\
& K^0 (B )  
&}
\]
becomes  the  Atiyah-Singer   families index theorem in \cite{AtiSin4}. 
 \end{remark}
 
 In the presence of a twisting $\a: X\to K(\ZZ, 3)$ on a foliated manifold $(X, F)$,    
 Theorems   \ref{Main} and  \ref{Main:2},   and Remark \ref{remark:index}   give  rise to  
 the following twisted longitudinal index theorem.
 
 \begin{theorem} \label{index:foliated} Given a $C^\infty$   n-dimensional foliated manifold $(X, F)$ 
 of rank $k$ and a twisting $\a: X\to K(\ZZ, 3)$,  let $\pi: F^* \to X$ be the projection. Then
    the twisted  longitudinal index theorem for the  foliated manifold 
 $(X, F)$ with a twisting $\a$  is given by the following
 commutative diagram 
\[
   \xymatrix{ 
& K^0 (F^*, \a\circ \pi )  \ar[dl]_{\ind_t} \ar[dr]^{\ind_a} 
&\\
K^{geo}_{[n-k]}(X, \a + (W_3 \circ \nu_F))  \ar[rr]^{\mu}_{\cong} & & K^a_{[n-k]} (X, \a + (W_3 \circ \nu_F) ),}
\]
 whose arrows are all   isomorphisms.  In particular,  if  $(X, F)$ comes from a fibration $\pi_B: X \to B$ and  a   twisting $\a\circ \pi_B$ on $X$ comes from a twisting $\a$ on $B$ , then we have the following twisted version of the 
 Atiyah-Singer families index theorem with  notations from Remark \ref{index:families} 
\small{ \[
 \xymatrix{ 
& K^0 (T^*(X/B),  \a\circ \pi_B \circ \pi   )  \ar[dl]_{( \pi_B)_! \circ \ind_t \ } \ar[dr]^{( \pi_B)_! \circ\ind_a} 
&\\
K^{geo}_{[n-k]}(B, \a + (W_3\circ \tau_B )) \ar[rr]^{\mu} \ar[dr]_{PD}  & & K^a_{[n-k]} (B, \a+ (W_3 \circ \tau_B ))\ar[dl]^{PD} \\
& K^0 (B, \a ) . 
&}
\]}
 \end{theorem}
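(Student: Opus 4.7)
The plan is to deduce Theorem~\ref{index:foliated} directly from Theorems~\ref{Main} and~\ref{Main:2}, applied with the twisting $\alpha + W_3\circ\nu_F$ on $X$. Both $\ind_t$ and $\ind_a$ will be defined by the \emph{same} chain of canonical isomorphisms starting from $[\sigma_D]\in K^0(F^*, \alpha\circ\pi)$, so that commutativity of the triangle is essentially automatic: its two branches factor through a common intermediate group and diverge only at the last step, where one uses topological Poincar\'e duality and the other uses the analytic one.

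First I would define $\ind_a$ by extending the untwisted sequence to the $\alpha$-twisted setting:
\[
\begin{array}{lllll}
K^0(F^*, \alpha\circ\pi) &\cong & K^{[k]}(X, \alpha + W_3(F)) &\quad& \text{(twisted Thom, property (V))} \\[2mm]
&\cong & K^a_{[n+k]}(X, \alpha + W_3(F) + W_3\circ\tau_X) &\quad&  \text{(analytic Poincar\'e duality $\Psi_a$)} \\[2mm]
&\cong & K^a_{[n+k]}(X, \alpha + W_3\circ\nu_F), & &
\end{array}
\]
where the last identification uses the bundle isomorphism $F\oplus TX \cong F\oplus F \oplus N_F$ together with the canonical $Spin^c$ structure on $F\oplus F$ to supply a canonical homotopy between $W_3(F)+W_3\circ\tau_X$ and $W_3\circ\nu_F$. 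In exact parallel I would define $\ind_t$ by the same Thom step, then the topological Poincar\'e duality $\Psi_t^{-1}$ of Theorem~\ref{Main}, then the inverse assembly map $\mu^{-1}$ of Theorem~\ref{Main:2}, landing in $K^{geo}_{[n-k]}(X,\alpha + W_3\circ\nu_F)$. Each arrow in both chains is an isomorphism, so $\ind_t$ and $\ind_a$ are isomorphisms onto the stated targets.

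The identity $\mu\circ\ind_t = \ind_a$ is then immediate from the structure of the previous two theorems: the relation $\Psi_a = \Phi\circ\Psi_t$ built into the construction of $\Phi$ shows that analytic and topological Poincar\'e dualities agree via $\Phi$, and the relation $\Phi = \mu\circ\Psi$ (where $\Psi\colon K^t \to K^{geo}$ is the map of Theorem~\ref{Main:2}) promotes this to the geometric side. Tracing $[\sigma_D]$ through, the composite $\mu\circ\ind_t$ coincides with the twisted Thom isomorphism followed by analytic Poincar\'e duality, which is $\ind_a$ by definition.

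For the families-index corollary, a fibration $\pi_B\colon X\to B$ with vertical tangent bundle $F$ gives $N_F\cong \pi_B^*TB$, and this isomorphism supplies a canonical homotopy between $W_3\circ\nu_F$ and $W_3\circ\tau_B\circ\pi_B$, so that the twisting $\alpha\circ\pi_B + W_3\circ\nu_F$ on $X$ is canonically identified with $(\alpha + W_3\circ\tau_B)\circ\pi_B$. Pushing forward under $(\pi_B)_!$ (property (VI), which is natural with respect to both $\mu$ and the two Poincar\'e dualities) collapses the longitudinal diagram on $X$ to its families counterpart on $B$, and a final application of topological and analytic Poincar\'e duality converts both terminal vertices back to $K^0(B,\alpha)$. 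The only real obstacle is the careful bookkeeping of twistings and the canonicity of the various $W_3$-homotopies used at each step; since these all reduce to the canonical $Spin^c$ structure on $F\oplus F$ and the naturality of the Thom and Poincar\'e isomorphisms in twisted K-theory, the difficulties are conceptual rather than computational, and once unwound the theorem follows directly from the results already proved.
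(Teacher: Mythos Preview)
Your proposal is correct and follows the paper's own route: the paper gives no detailed proof of Theorem~\ref{index:foliated} beyond the sentence ``Theorems~\ref{Main} and~\ref{Main:2}, and Remark~\ref{remark:index} give rise to the following twisted longitudinal index theorem,'' and you have unwound precisely that invocation --- twisted Thom for $F^*$, then the twisted Poincar\'e dualities $\Psi_t$ and $\Psi_a$ with the twisting shifted by $W_3\circ\nu_F$, glued by $\Phi=\mu\circ\Psi$. One small slip: you write $\Psi_t^{-1}$ where you mean $\Psi_t$ (in the paper $\Psi_t$ already goes from $K$-cohomology to $K^t$, see~(\ref{Psi:t})), and the passage $K^t\to K^{geo}$ is via the map $\Psi$ of Theorem~\ref{Main:2} rather than $\mu^{-1}$ directly; once the arrows are pointed correctly your chain type-checks and the argument goes through unchanged.
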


 In \cite{MMSin}, Mathai-Melrose-Singer established  the index theorem for projective families of 
 longitudinally  elliptic operators associated to 
 a fibration $\phi: Z \to X$ and an Azumaya bundle $\cA_\a$
 for $\a$ representing a torsion class  in $ H^3(X, \ZZ)$. 
 
 Given  a  local  trivialization of $\cA_a$  for an
  open covering of $X=\cup_{i}U_i$,  according to \cite{MMSin},  a projective family of 
  longitudinally    elliptic operators is a collection of 
   longitudinally    elliptic pseudo-differential operators,  
   acting on finite dimensional   vector bundles of fixed rank over each of the  open sets 
   $\{\phi^{-1}(U_i) \}$  such
   that the compatibility condition over triple overlaps may fail by a scalar factor. The symbol class
   of such a  projective family of   elliptic operators  determines  a class in
   \[
   K^0(T^*(Z/X), \a\circ \phi \circ  \pi),
   \]
   where  $T^*(Z/X)$ is  dual to  the vertical tangent bundle  of $Z$ and
    $\pi: T^*(Z/X) \to Z$ is the projection.     Let $n$ be the dimension of $Z$ and $k$ be the
    dimension of the fiber of $\phi$. 
  The Thom isomorphism, Theorems \ref{Main}   and \ref{Main:2}  give  rise to the following
  commutative diagram
  \[
 \xymatrix{ 
& K^0 ( T^*(Z/X), \a\circ \phi \circ  \pi)  \ar[dl]_{\phi_!\circ \ind_t} \ar[dr]^{\phi_!\circ \ind_a} &\\ 
K^{geo}_{[n-k]}(X , \a  + ( W_3\circ \tau ) ) \ar[rr]^{\mu} \ar[dr]_{PD}  & & K^a_{[n-k]} (X , \a  + ( W_3\circ \tau ))
\ar[dl]^{PD}_{\cong} \\
& K^0 (X, \a )
& }
\]
here $\a  + ( W_3\circ \tau) $ represents the class $[\a]+W_3(X) \in H^3(X, \ZZ)$. 
Readers familiar with \cite{MMSin} will recognise that the above theorem 
is another way of writing the Mathai-Melrose-Singer  index theorem 
(Cf. Theorem 4  in \cite{MMSin}) for projective families of 
 longitudinally  elliptic operators associated a fibration 
 $\phi: Z \to X$ and an Azumaya bundle $\cA_\a$
 for $\a$ representing a torsion class  $[\a]\in H^3(X, \ZZ)$.

  \section{Final remarks} \label{section:8}

  Let $M$ be  an oriented  manifold   with a map $\nu: M \to \BSO $ classifying its  stable
   normal bundle. Given any fibration $\pi: B \to BSO$, we can define a $B$-structure on
   $M$ to be a homotopy class of lifts $\tilde \nu$  of $\nu$:
   \ba\label{lift}
   \xymatrix{
 &    B \ar[d]^\pi  \\
   M \ar@{-->}[ur]^{\tilde\nu}\ar[r]^\nu &  \BSO. 
   }
   \na
   When $B$ is $\BSpin^c$, then a lift   $\tilde{\nu }$  in (\ref{lift}) is a $Spin^c$ structure on its
   stable normal bundle.  
    When $B$ is $\BSpin$,  then a lift   $\tilde{\nu }$  in (\ref{lift}) is a $Spin$ structure on its
   stable normal bundle.

  Define
   $$\String = \disp{\vlim_{k\to \infty}} \String (k)$$ 
  where $\String (k)$ is an infinite dimensional
   topological group constructed in \cite{StoTei}. There is a map $\String(k) \to Spin(k)$ which
   induces an isomorphism $\pi_n (\String(k) ) \cong \pi_n(Spin(k))$ for all $n$ except $n=3$ when
   $\pi_3(\String(k) ) =0 $ and $\pi_3(Spin(k) ) \cong \ZZ$.

   Let $M$ be  a $Spin$  manifold   with a classifying map $\nu: M \to \BSpin $ for the $Spin$
   structure on  its  stable normal bundle.  A  string structure on $M$ is a lift $\tilde \nu$  of $\nu$:
   \[
   \xymatrix{
 &    \BString  \ar[d]^\pi  \\
   M \ar@{-->}[ur]^{\tilde\nu}\ar[r]^\nu &  \BSpin. 
   }
   \]
   We point out that an oriented manifold $M$ admits a spin structure on its stable normal bundle
   if and only if its second Stiefel-Whitney class $w_2(M)$  vanishes, and a spin manifold $M$  admits a string
   structure on its stable  normal bundle if  $\frac{p_1(M)}{2}$ vanishes, where  $p_1(M)$ denotes the first 
   Pontrjagin class of  $M$ (Cf. \cite{McL} \cite{StoTei}).   If $M$ is a string manifold, then $M$ has
   a canonical orientation with respect to elliptic cohomology.

 The tower of Eilenberg-MacLane  fibrations 
   \[\xymatrix{
   \BString\ar[d]^{K(\ZZ, 3)} & & \\
   \BSpin \ar[d]^{K(\ZZ_2, 1)} \ar[rr]^{\frac{p_1}{2}} && K(\ZZ, 4)\\
   \BSO  \ar[rr]^{w_2} && K(\ZZ_2, 2)}
   \]
  gives  rise to Thom 
  spectra
  \[
  \MString \longrightarrow \MSp   \longrightarrow \MSO,
  \]
 with  corresponding bordism groups 
  \[
  \Omega_*^{String}(X)   \longrightarrow \Omega_*^{Spin}(X)  \longrightarrow\Omega_*^{SO}(X). 
  \]
  
 \begin{remark}
   Given a paracompact space $X$, a continuous map $\a: X \to K(\ZZ_2, 2)$ is called
   a KO-twisting, and a continuous map $\a: X \to K(\ZZ, 4)$ is called a string twisting.  
   For a principal $G$-bundle $\cP$ over $X$ for a compact Lie group $G$ equipped
   with a map $BG\to K(\ZZ, 4)$ representing a  degree 4 class in $H^4(BG, \ZZ)$,  there
   is a natural string twisting
    \[
   X\longrightarrow BG \longrightarrow K(\ZZ, 4).
   \]
   Given a string twisting $\a: X \to K(\ZZ, 4)$, a universal Chern-Simons 2-gerbe
   was constructed in \cite{CJMSW}.
\end{remark}

   For   any KO-twisting $\a$,  there is a corresponding notion
  of an $\a$-twisted $Spin$ manifold  over $(X, \a)$.
  
  \begin{definition} \label{twisted:spin}  Let $(X, \a)$ be  a paracompact  topological  space   with a
   twisting 
$\a: X\to K(\ZZ_2,2)$.   An $\a$-twisted $Spin$ manifold   over $X$ is   a  quadruple $(M, \nu,  \i, \eta)$ 
where 
\begin{enumerate}
\item $M$ is a smooth,  oriented and {\bf compact}  manifold   together  with a fixed classifying map of its  stable normal bundle  
\[
\nu:   M \longrightarrow  \BSO.
\]
\item  $\i: M\to X$ is a  continuous map;
\item $\eta$ is an $\a$-twisted $Spin$  structure  on $M$, that is   a homotopy commutative diagram
\[
\xymatrix{M \ar[d]_{\i} \ar[r]^{\nu} &
\BSO
 \ar@2{-->}[dl]_{\eta} \ar[d]^{w_2} \\
X \ar[r]_\a  & K(\ZZ_2, 2), } 
\]
where $w_2$ is the classifying map of the principal 
$K(\ZZ_2, 1)$-bundle $\BSpin  \to \BSO$  associated to
the second   Stiefel-Whitney class and   $\eta$ is a homotopy
between $w_2 \circ \nu $ and $\a \circ \i$.  
\end{enumerate}
Two  $\a$-twisted $Spin $  structures  $\eta$ and
$\eta'$ on $M$ are called equivalent if there is a homotopy between $\eta$ and $\eta'$.
\end{definition}

 \begin{remark}  Given a smooth,  oriented and compact  n-dimensional manifold $M$
  and a paracompact space $X$ with a  KO-twisting
$\a: X\to K(\ZZ_2, 2).$
\begin{enumerate}  \item 
$M$ admits an $\a$-twisted $Spin$ structure if and only if  there exists
a continuous map
$\i: M\to X$ such that 
\ba\label{cond:spin}
\i^*([\a]) + w_2(M)=0
\na
in $H^2(M, \ZZ_2)$, here $w_2(M)$ is the second  Stiefel-Whitney class of $TM$. (The condition
(\ref{cond:spin}) is 
the anomaly  cancellation condition for Type I D-branes (Cf. \cite{Wit3}).)
\item If $\i^*([\a]) + w_2(M) =0$, then the  set  of equivalence classes of $\a$-twisted $Spin $ structures
on $M$ are in one-to-one correspondence with elements in $H^1(M, \ZZ_2)$.
\end{enumerate}
\end{remark}

 Let $\cH_\RR$ be an  infinite dimensional, real and separable Hilbert space.  The projective orthogonal  group $PO(\cH_\RR)$ with the norm topology (Cf. \cite{Kui})  has the homotopy type of an 
Eilenberg-MacLane space $K(\ZZ_2, 1)$. 
The  classifying space of $PO(\cH_\RR)$, as  a classifying space of principal $PO(\cH_\RR)$-bundle,  is a $K(\ZZ_2, 2)$. 
Thus, the set of isomorphism classes of  locally trivial 
principal $PO(\cH_\RR)$-bundles over   $X$ is canonically identified
with   
$$[X, K(\ZZ_2, 2)] \cong H^2(X, \ZZ_2).$$   Given  a KO-twisting $\a: X\to K(\ZZ_2, 2)$, there is a canonical  
principal $K(\ZZ_2, 1)$-bundle  over   $X$, or equivalently, a  locally trivial 
principal $PO(\cH_\RR)$-bundle  $\cP_\a$   over   $X$.   Let   $\cK_\RR$ be the $C^*$-algebra of real compact operators on
$\cH_\RR$. Denote by  $C_c(X, \cP_\a (\cK_\RR))$ the $C^*$-algebra of compactly supported sections
of the associated bundle 
\[
 \cP_\a (\cK_\RR) :=  \cP_\a \times _{PO(\cH_\RR)} \cK_\RR.
 \]
 In \cite{Ros} (see also \cite{MMSte}), twisted KO-theory is defined for   $X$ with a KO-twisting
  $$\a: X \to K(\ZZ_2, 2)$$   to be
  \[
  KO^i(X, \a) := KO_i\bigl(C_c(X, \cP_\a (\cK_\RR))\bigr),
  \]
 Let $\KK_\RR$ be the 0-th space of  the KO-theory spectrum, then 
 there is a base-point preserving action
of $K(\ZZ_2, 1)$ on the real  K-theory spectrum
\[
K(\ZZ_2, 1) \times \KK O \longrightarrow  \KK O
\]
which is represented by the action of real line bundles on ordinary KO-groups.  
This action defines an associated bundle of  KO-theory spectrum over $X$. Denote 
\[
\cP_\a (\KK_\RR) = \cP_\a\times_{K(\ZZ_2, 1)} \KK_\RR 
\]
the bundle of based spectra over $X$ with fiber the  KO-theory spectra,  and $\{ \Omega^n_X \cP_\a(\KK_\RR) = 
 \cP_\a\times_{K(\ZZ_2, 1)} \Omega^n \KK_\RR  \}$  the   fiber-wise iterated loop spaces. 
 Then we have an equivalent definition of twisted KO-groups  of $(X, \a)$ (Cf. \cite{Ros})
\[
KO^n(X, \a) = \pi_0\bigl( C_c(X, \Omega^n_X \cP_\a(\KK_\RR))\bigr)
\]
the  set of homotopy classes of compactly supported sections of the bundle of K-spectra.
Due to Bott periodicity, we only have eight different  twisted K-groups $KO^i(X, \a)$ ($ i=0, \cdots 7$).
Twisted KO-theory is a 8-periodic generalized cohomology theory.

One would expect that results in this paper can be extended  to twisted KO-theory. Much of the
constructions  and arguments in this paper go through in the case of   twisted KO-theory. The subtlety is
to study the twisted $KR$-theory for the (co)-tangent bundle with the canonical involution.  
  This may requires additional arguments. 
  
  Another interesting generalization is the notion of twisted string structure  for a  paracompact  topological  space $X$ with
  a string twisting given by
  \[
  \a: X  \longrightarrow  K(\ZZ, 4).
  \]
    
\begin{definition} \label{twisted:str}  Let $(X, \a)$ be  a paracompact  topological  space   with a
string  twisting 
$\a: X\to K(\ZZ, 4)$.   An $\a$-twisted {\bf string }  manifold   over $X$ is  a  quadruple $(M, \nu,  \i, \eta)$ 
where 
\begin{enumerate}
\item $M$ is a smooth  compact  manifold   with a stable spin structure on its    normal bundle  given
by 
\[
\nu:   M \longrightarrow  \BSpin
\]
here $\BSpin  = \disp{ \varinjlim_{k } } \BSpin (k)$ is  the classifying space of the stable  spin structure;
\item  $\i: M\to X$ is a continuous map;
\item $\eta$ is an $\a$-twisted string structure   on $M$, that is   a homotopy commutative diagram
\[
\xymatrix{M \ar[d]_{\i} \ar[r]^{\nu} &
\BSpin
 \ar@2{-->}[dl]_{\eta} \ar[d]^{\frac{p_1}{2} } \\
X \ar[r]_\a  & K(\ZZ, 4), } 
\]
where $\frac{p_1}{2}: \BSpin \to K(\ZZ, 4) $ is the classifying map of the  principal 
$K(\ZZ, 3)$-bundle $\BString  \to \BSpin $,  representing the generator of $H^4(\BSpin, \ZZ)$,
  and  $\eta$ is a homotopy
between $\dfrac{p_1}{2}  \circ \nu $ and $\a \circ \i$.  
\end{enumerate}
 Two  $\a$-twisted String   structures  $\eta$ and
$\eta'$ on $M$ are called equivalent if there is a homotopy between $\eta$ and $\eta'$.
\end{definition}  

\begin{remark} 
Given a smooth compact  spin manifold $M$ and a paracompact space $X$ with a string twisting
$\a: X\to K(\ZZ, 4).  $
\begin{enumerate}  \item 
$M$ admits an $\a$-twisted string  structure if and only if  there is a continuous map
$\i: M\to X$ such that
\ba\label{cond:string}
\i^*([\a])  +  \frac{p_1 (M)}{2}  =0
\na
in $H^4(M, \ZZ)$, here $ p_1(X) $ is the first Pontrjagin class of $TM$. 
 \item If $\i^*([\a])  +  \frac{p_1 (M)}{2} =0$,  then the  set  of equivalence classes of $\a$-twisted 
string  structures
on $M$ are in one-to-one correspondence with elements in $H^3(M, \ZZ)$.
\end{enumerate}
\end{remark}

Given a manifold $X$ with a twisting $\a: X\to K(\ZZ, 4)$,  one can form a    bordism category,
 called the $\a$-twisted string bordism over $(X, \a)$,  whose 
 objects are compact smooth  spin  manifolds   over $X$ with an $\a$-twisted string  structure. 
  The corresponding bordism  group $\Omega_*^{String}(X, \a)$ is called the  $\a$-twisted string
    bordism  group of $X$.   We will study these  $\a$-twisted string
    bordism  groups and their applications    elsewhere.  

\vskip .2in
\noindent
{\bf Acknowledgments}  

The author  likes to thank Paul Baum,  Alan Carey, Matilde Marcolli, Jouko Mickelsson,  Michael Murray,
Thomas Schick and Adam Rennie   for  useful conversations and their comments on the manuscript.  The author thanks  Alan Carey for his continuous support and encouragement.  The author also thanks the referee for the suggestions to improve the manuscript.  The  work is  supported in part by Carey-Marcolli-Murray's ARC  Discovery Project  DP0769986.

  \end{document}